\newtheorem{definition}{Definition}
\newtheorem{claim}{Claim}
\newtheorem{theorem}[definition]{Theorem}
\newtheorem{lemma}[definition]{Lemma}
\newcommand{\bigO}{\ensuremath{O}}
\newcommand{\bigOmega}{\ensuremath{\Omega}}
\newcommand{\comment}[1]{}
\newcommand{\N}{\mathbb N}
\newcommand{\es}{\emptyset}
\newcommand{\cP}{\mathcal{P}}
\newcommand{\cF}{\mathcal{F}}
\newcommand{\cH}{\mathcal{H}}
\newcommand{\cE}{\mathcal{E}}
\newcommand{\cG}{\mathcal{G}}
\newcommand{\cJ}{\mathcal{J}}
\newcommand{\clos}[1]{\overline{#1}}
\tikzstyle{hvertex}=[thick,circle,inner sep=0.cm, minimum size=2.2mm, fill=white, draw=black]
\tikzstyle{smallvx}=[thick,circle,inner sep=0.cm, minimum size=1.5mm, fill=white, draw=black]
\tikzstyle{hedge}=[very thick]
\colorlet{hellgrau}{black!30!white}
\colorlet{dunkelgrau}{black!50!white}
\colorlet{hellblau}{blue!20!white}
\colorlet{hellrot}{red!40!white}
\tikzstyle{convcols}=[color=white,fill=hellblau]
\newcommand{\EP}{Erd\H{o}s-P\'osa property}
\newcommand{\dist}{{\rm dist}}
\newcommand{\ds}{{\rm ds}}
\newcommand{\emtext}[1]{\text{\em #1}}
\newcommand{\sm}{\setminus}
\newcommand{\new}[1]{#1}
\newenvironment{newe}{\color{red}}{}
\newcommand{\bn}{\begin{newe}}
\newcommand{\en}{\end{newe}}
\newcommand{\len}{\ell}
\title{Long cycles have the edge-Erd\H{o}s-P\'osa property} 
\author{Henning Bruhn, Matthias Heinlein and Felix Joos\thanks{The research was also supported by the EPSRC, grant no. EP/M009408/1.}}
\date{}
\begin{document}
\maketitle

\begin{abstract}
We prove that the set of long cycles has the edge-\EP:
for every fixed integer $\ell\ge 3$ and every $k\in\N$, every graph $G$ 
either contains $k$ edge-disjoint cycles of length at least $\ell$ (long cycles)
or an edge set $X$ of size $\bigO(k^2\log k +  \new{k\ell})$ such that $G-X$ does not contain any long cycle. 
This answers a question of Birmel{\'e}, Bondy, and Reed (Combinatorica 27 (2007), 135--145).
\end{abstract}

\section{Introduction}

Many theorems \new{in graph theory} have a vertex version and an edge version. 
There is a Menger theorem about (vertex-)disjoint paths and a variant
about edge-disjoint paths. We prove here the edge analogue of an Erd\H os-P\'osa-type
theorem.

Erd\H os and P\'osa~\cite{EP62} proved in 1962 
that every graph either contains $k$ disjoint cycles or a set of $\bigO(k\log k)$
vertices that meets every cycle. 
Since then many Erd\H os-P\'osa-type theorems have been discovered, among them one about 
\emph{long} cycles. These are cycles of a length that is at least some fixed integer~$\ell$. 

Indeed, every graph either contains $k$ disjoint long cycles or a 
set of $\bigO(k\ell+k\log k)$
vertices that meets every \new{long} cycle. With a worse bound this follows 
from a theorem of Robertson and Seymour~\cite{RS86}, while
the stated bound is due to Mousset, Noever, \new{\v{S}}kori\'c, and Weissenberger~\cite{MNSW16}. We prove an edge-disjoint analogue:
\begin{theorem}\label{thm:EPlongcycles}
Let $\ell$ be a  positive integer.
Then every graph $G$ either contains $k$ edge-disjoint long cycles
or a set $X\subseteq E(G)$ of size $\bigO(\new{k^2 \log k + k\ell })$ such that $G-X$ contains no long cycle.
\end{theorem}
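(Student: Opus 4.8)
The plan is to argue by induction on $k$, in the classical Erd\H os--P\'osa manner: find one long cycle, delete a bounded edge set that ``isolates'' it from the rest, and recurse with parameter $k-1$. The base case $k=1$ is immediate, and for the inductive step we may assume $G$ contains no $k$ edge-disjoint long cycles and must produce a hitting set $X$ of the stated size (if $G$ has no long cycle, $X=\es$ works). A routine reduction lets us assume $G$ is $2$-connected, since long cycles lie inside blocks and a packing distributes over blocks; and we may assume $G$ is not a single cycle, as then one edge suffices.

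Fix a shortest long cycle $C$. The argument splits according to the length of $C$. If $|E(C)|\le c\ell$ for a suitable absolute constant $c$, delete $E(C)$ and apply the induction hypothesis to $G-E(C)$ with parameter $k-1$: either we obtain $k-1$ edge-disjoint long cycles, which together with $C$ finish the proof, or we obtain a hitting set $X'$ of $G-E(C)$ of size $\bigO((k-1)^2\log(k-1)+(k-1)\ell)$, and then $X:=X'\cup E(C)$ is a hitting set for $G$ of size $\bigO(k^2\log k + k\ell)$, the cost at this level being only $\bigO(\ell)$. The substantive case is therefore that the shortest long cycle is long, $|E(C)|>c\ell$; equivalently, $G$ has no cycle whose length lies in the window $[\ell,c\ell]$, a fact used below to keep the lengths of arcs and detours under control.

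In this case the goal is to find an edge set $Y$ with $|Y|=\bigO(k\log k)$ such that $G-Y$ contains no long cycle using an edge of $C$; given such a $Y$, recursion on $(G-Y)-E(C)$ with parameter $k-1$ finishes exactly as before, now at a cost of $\bigO(k\log k)$ at this level, so the total over the $k$ levels is $\bigO(k^2\log k + k\ell)$. To construct $Y$ one analyses the bridges of $C$: any long cycle meeting $E(C)$ is a union of sub-arcs of $C$ joined by $C$-paths. The structural heart of the proof is to show that if these $C$-paths form a rich family --- many pairwise edge-disjoint ones, or a few that are individually long --- then, since $C$ is long and carries no cycle of length in $[\ell,c\ell]$, one can complete $\bigOmega(k)$ of them through pairwise disjoint arcs of $C$ into $k$ edge-disjoint long cycles, contradicting our assumption. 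Hence the relevant $C$-paths are sparse, and an edge-version Erd\H os--P\'osa/Mader-type theorem for paths yields the desired $Y$ of size $\bigO(k\log k)$ (this also absorbs the chords of $C$, which are length-one $C$-paths).

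The main obstacle is exactly this structural step: bounding, in terms of $k$ alone, the number of edges needed to destroy every long cycle sharing an edge with a fixed long cycle $C$. A long cycle can weave in and out of $C$ many times and $C$ may carry many chords and attached paths, so one must show that any such abundance already forces a $k$-packing; this needs a careful min--max analysis of edge-disjoint $C$-paths together with precise bookkeeping of how arcs of $C$ and detours combine into cycles of length $\ge\ell$, and it is here that the absence of cycles of length in $[\ell,c\ell]$ is essential. Getting the per-level cost down to $\bigO(k\log k)$, and incurring the length term only as $\bigO(\ell)$ per level rather than $\bigO(k\ell)$ per level (which would give the inferior bound $\bigO(k^2\ell)$), is the quantitatively delicate part and is what yields the claimed $\bigO(k^2\log k + k\ell)$.
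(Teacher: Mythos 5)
The induction setup, the reductions to $2$-connected non-cycle $G$, and the handling of a short long cycle by deleting its edges and recursing with parameter $k-1$ all match the paper. The divergence, and the gap, begins immediately afterward.

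Your substantive step is to find, for a shortest long cycle $C$, an edge set $Y$ with $|Y|=\bigO(k\log k)$ that destroys every long cycle meeting $E(C)$, and then to recurse on $(G-Y)-E(C)$. This per-level ``isolate $C$'' plan is asserted but not established, and your final paragraph concedes as much. The claimed ingredients --- that an abundance of $C$-paths forces a $k$-packing, and that otherwise ``an edge-version Erd\H{o}s--P\'osa/Mader-type theorem for paths yields the desired $Y$'' --- are neither made precise nor proved. It is not stated what vertex sets these ``paths'' connect, why sparsity of $C$-paths is the right notion, nor how the absence of cycles in $[\ell,c\ell]$ lets one route $\bigOmega(k)$ edge-disjoint long cycles through disjoint arcs of $C$ (the $C$-paths can cluster on $C$, and a shortest long cycle gives no control on where they attach). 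A shortest long cycle also lacks the maximality driving the paper's structure: $C$-paths need not be short, and the bridges of $C$ do not decompose into anything as rigid as the paper's hubs, so the intended ``sparse implies small cut'' step has no visible mechanism.

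The paper proceeds quite differently. It fixes a \emph{frame} $F$ --- a subgraph of minimum degree at least $2$ all of whose cycles are long --- chosen to maximise $\ds(F)$. Maximality forces every $F$-path to be short and bounds $\ds(F)=\bigO(k\log k)$, which in turn bounds the number of hub gates and the number of ears of $F$. The hitting set is then built in a \emph{single} pass (not peeled level by level): $k$-perfect edge separations of the hub gates (Lemma~\ref{lem:gatessep2}) plus cuts across thin ears give $\bigO(k^2\log k)$ edges; the $\bigO(k\ell)$ term enters only through the initial deletion of intermediate-length cycles. Establishing that this set hits every long cycle (or else produces $k$ edge-disjoint long cycles) rests on two structural facts with no analogue in your sketch: hub closures contain no long cycle, and any cycle leaving a hub closure is long. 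These consume most of Section~\ref{sec:mainproof} and are where the real work lies.

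In short: your outer induction and the treatment of intermediate cycles are correct and coincide with the paper's, but the heart of the argument --- obtaining a small hitting set in the regime with no intermediate cycles --- is absent, and the proposed route via a single cycle $C$ is both essentially different from the paper's and unsubstantiated at its critical step.
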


This answers a question of Birmel{\'e}, Bondy, and Reed~\cite{BBR07}.

For 
vertex-disjoint long cycles, the bound of~$\bigO(k\ell+k\log k)$ proved by 
Mousset et al.~\cite{MNSW16} is optimal as it 
matches a lower bound found by Fiorini and Herinckx~\cite{FH14}.
We show below that the set $X$ in  Theorem~\ref{thm:EPlongcycles}
also needs to have size at least $\bigOmega(k\ell+k\log k)$. We believe that,
as in the vertex version, this is the right order of magnitude. 

A family $\cF$ of graphs has the \emph{\EP} 
if there is a function $f_\cF:\N\to\mathbb R$ such that 
for every integer $k$,  every graph $G$ either contains $k$ disjoint copies of graphs in $\cF$ or 
a \emph{hitting set} $X\subseteq V(G)$ of size at most $f_\cF(k)$ that meets every $\cF$-copy in $G$.
Thus cycles have the \EP, but also, for instance, even cycles~\cite{Tho88} and many other graph classes.

Many such results are the consequence of a far-reaching theorem of Robertson and Seymour~\cite{RS86}:
for a fixed graph $H$, the class of graphs that have $H$ as a minor has the \EP\
if and only if $H$ is planar. For example, the theorem implies that 
long cycles  have the \EP.

Less is known about the edge analogue of the \EP.
There, the objective is to find edge-disjoint copies of graphs in $\cF$
or a bounded hitting set of edges.  
While cycles have the \emph{edge-Erd\H os-P\'osa property}~\cite[Exercise 9.5]{Die10}, an edge version 
of  Robertson and Seymour's theorem, for example, is still wide open.
By our result, long cycles have the edge-\EP.

We know of only two other graph classes that have the 
edge-\EP: $S$-cycles, cycles that each contain a vertex from a fixed set $S$,
and the graphs that contain a $\theta_r$-minor, where $\theta_r$ is the multigraph 
consisting of two vertices linked by $r$ parallel edges. 
The first result is due to Pontecorvi and Wollan~\cite{PW12}, the second due to Raymond, Sau 
and Thilikos~\cite{RST13}. 
Strikingly, both results are obtained via a reduction to their respective vertex versions.
For long cycles this does not seem to 
be possible (at least not that easily), and consequently, our proof is direct.

Within restricted ambient graphs, two more graph classes are known 
to have the edge-\EP. 
Odd cycles do not have the \EP, and they do not have the edge 
version either~\cite{DNL87}. The same is true for the class of graphs
that contain an immersion\footnote{
A graph $G$ contains an immersion of $H$
if there is an injective function $\tau:V(H)\to V(G)$
and edge-disjoint $\tau(u)$--$\tau(v)$-paths for every $uv\in E(H)$ in $G$.
} of~$H$ for certain graphs~$H$.
If, however, the ambient graphs $G$ are required to be $4$-edge-connected, then 
odd cycles as well as graphs with an $H$-immersion 
gain the edge-\EP~\cite{KK16,Liu15}.

\medskip
There are many more results about the ordinary \EP, most of which 
are listed in the survey of 
Raymond and Thilikos~\cite{RT16}.
A direction we find interesting concerns rooted graphs. In this setting, a set $S$ (or two 
or more such sets) is fixed in the ambient graph $G$. The target objects
are required to meet the set $S$ in some specified way. For instance, 
$S$-cycles, cycles that each intersect $S$, have the \EP~\cite{KKM11,PW12},
and this is still true for long $S$-cycles~\cite{BJS14}.
Huynh, Joos, and Wollan~\cite{HJW16} verify the \EP\ for cycles
satisfying more general restrictions that include  for example  $S_1$-$S_2$-cycles 
\new {(cycles that intersect both $S_1$ and $S_2$)}.
Note that $S_1$-$S_2$-$S_3$-cycles do not have the \EP. 
We do not know whether the \EP\ extends to edge-disjoint $S_1$-$S_2$-cycles. 

\medskip

The Triangle Removal Lemma of Ruzsa and Szemer\'edi~\cite{RS78} also has a certain (edge-)\EP{} flavour.
Its many applications include, for example, a short proof of
Roth's celebrated result on $3$-term arithmetic progression in dense integer sets~\cite{Rot53}.
The lemma states that there is a function $f:(0,1)\to (0,1)$
such that \new{for} every graph $G$ on $n$ (sufficiently large) vertices \new{and every $0<\epsilon<1$}
either \new{$G$} contains $f(\epsilon) n^3$ 
(normally not edge-disjoint) triangles  or there is 
a set of edges $X\subseteq E(G)$
of size $|X|\leq \epsilon n^2$ such that $G-X$ is triangle-free. 
Analogous results are known for all graphs (instead of triangles) and even known in the uniform hypergraph setting (see~\cite{CF13} for a survey on this topic).
These results rely heavily on (hyper)graph regularity methods.
\medskip

In Section~\ref{sec:dis},
we discuss the size of the hitting set and how the \EP{} and its edge analogue differ.
In Section~\ref{sec:preliminaries},
we introduce tools needed in the proof of Theorem~\ref{thm:EPlongcycles}.
After a brief overview we  
prove Theorem~\ref{thm:EPlongcycles}
in Section~\ref{sec:mainproof}.

\section{Discussion}\label{sec:dis}

\subsection{The size of the hitting set}

Fiorini and Herinckx~\cite{FH14} observed that the hitting set for long cycles
in the ordinary \EP\ needs to have size at least $\bigOmega(k\ell+k\log k)$. 
That there is a hitting set of size $\bigO(k\ell+k\log k)$, the optimal size, 
is due to Mousset et al.~\cite{MNSW16} who built on earlier work of 
 Robertson and Seymour~\cite{RS86},
Birmel{\'e} et al.~\cite{BBR07}, and
Fiorini and Herinckx~\cite{FH14}.

What is the optimal size of the hitting set in the edge-disjoint version?
As for vertex-disjoint long cycles, the  construction of Simonovits~\cite{Sim67},
originally intended for the classic Erd\H os-P\'osa theorem,
gives a lower bound of $\bigOmega(k\log k)$. Indeed, the graphs in 
the construction are cubic, which means that cycles are disjoint if and
only if they are edge-disjoint. 

That the size of the hitting set needs to depend on $\ell$ at all is not immediately 
obvious. But it does, and indeed, the dependence is linear. To prove this 
we construct graphs $S_\ell$ that do not contain two edge-disjoint long cycles
and that do not admit a hitting set of less than $\tfrac{\ell}{30}$ edges. 
Taking $k-1$ disjoint copies of $S_\ell$ then yields a graph 
without $k$ edge-disjoint \new{long} cycles and no hitting set of size smaller 
than $\tfrac{1}{30}(k-1)\ell=\bigOmega(k\ell)$. 
Therefore, the size of hitting sets for edge-disjoint long cycles
needs to be at least $\bigOmega(k\ell+k\log k)$.

\begin{figure}[htb]
\centering
\begin{tikzpicture}[scale=1]
\def\krad{1.2}
\def\angle{360/10}
\foreach \i in {0,...,9}{
  \begin{scope}[on background layer]
    \draw[hedge] (252+\i*\angle:\krad) -- (270+\i*\angle:1.5*\krad) -- (288+\i*\angle:\krad);    
  \end{scope}
  \foreach \j in {\i,...,9}{
  \begin{scope}[on background layer]
    \draw[hedge] (252+\i*\angle:\krad) -- (252+\j*\angle:\krad);
  \end{scope}
  \node[hvertex] (v\i) at (252+\i*\angle:\krad){};
  \node[hvertex] (w\i) at (270+\i*\angle:1.5*\krad){};
}}

\end{tikzpicture}
\caption{The graph $S_{17}$ contains no two edge-disjoint cycles of length at least~$17$.}\label{fig:lower}
\end{figure}
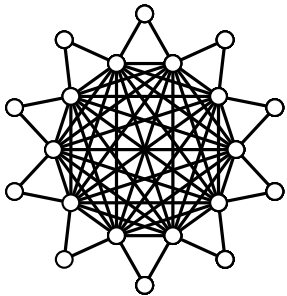

The graphs $S_\ell$ are constructed as follows.
Let $p=\lfloor \frac 2 3 (\ell-1) \rfloor$, and
let $S_\ell$ be the graph obtained from a clique on $p$ vertices $v_0, \ldots , v_{p-1}$
by adding vertices $w_0, \ldots , w_{p-1}$ such that each $w_i$ is adjacent to $v_{i-1}$ and $v_i$
(where we take indices mod~$p$).
The graphs $S_\ell$ are sometimes called \emph{suns}~\cite{ISGCI}.
As the clique contains only $p<\frac 2 3 \ell$ vertices, 
every long cycle  in $S_\ell$ passes
through at least \new{$\lceil\frac 1 3 \ell \rceil> \frac{p}{2}$} vertices of $\{w_0, \ldots, w_{p-1}\}$.
As these have degree~$2$, there cannot be two edge-disjoint long cycles in $S_\ell$.

Let $\ell\geq 30$, and 
consider any set $X$ of at most~$\frac{\ell}{30}$ edges. 
We show that $X$ is not a hitting set.
For every edge $uv\in X$, delete its endvertices $u$ and $v$ in $G$, 
and if we delete a vertex $v_i$ of the clique, also delete the adjacent vertices $w_{i}$ and $w_{i+1}$.
All in all, we delete a set $U$ of at most $6\cdot \tfrac{\ell}{30} \le \tfrac{\ell}{5} $ vertices in $G$.
For the cycle $C=v_0 \ldots v_{p-1} v_0$,
let $C_1, \ldots, C_r$ be the components of $C-U$.
Let $v_{s_i}$ and $v_{t_i}$ be the two endpoints of the \new{path} $C_i$.
None of the vertices $w_{s_i}, \ldots, w_{t_i-1}$ is deleted, and thus 
$P_i=v_{s_i}w_{s_i}v_{s_i+1}\ldots w_{t_i-1}v_{t_i}$ is a 
path  in $G-U$.

Concatenating the paths $P_i$  by adding the edges $v_{t_i}v_{s_{i+1}}$,
we obtain a Hamilton cycle $D$ of $G-U$. 
Noting that  $p\ge \frac 2 3 (\ell-3)$, 
we calculate that the length of $D$ is 
\[
|V(S_\ell)| - |U| =2p-\frac 1 5 \ell \ge \frac 4 3 (\ell-3) - \frac 1 5 \ell = \ell + \frac{2\ell - 60}{15} \ge \ell
\]
as $\ell\ge 30$.
Since $G-X\supseteq G-U$ still contains a long cycle, we deduce that
no edge set of size at most $\frac{\ell}{30}$ is a hitting set.
\medskip

Comparing the lower bound of $\bigOmega(k\ell + k\log k)$ with Theorem~\ref{thm:EPlongcycles},
we see that there is a gap in the second term by a factor $k$. 
We believe that the optimal size 
of the hitting set coincides with the lower bound. 

\subsection{Vertex versus edge version}

Why is the edge-\EP\ hard at all, especially when the corresponding 
vertex version is known? Cannot a reduction be employed or the proof
be adapted? Pontecorvi and Wollan~\cite{PW12} obtain the edge version 
for $S$-cycles from the vertex version by a simple gadget construction.
Essentially, they apply the vertex version to a modified line graph (a similar approach is also used by Kawarabayashi and Kobayashi~\cite{KK16}). 
Why is that not possible for long cycles?

Cycles do not have a unique image in the line graph. The line graph of a cycle 
is a cycle  but not every cycle in the line graph corresponds to a cycle 
in the root graph. The preimage of an $S$-cycle in the (slightly modified) line graph 
still contains an $S$-cycle---this is what allows Pontecorvi and Wollan 
to reduce to the vertex version. For long cycles this will not 
work because every cycle contained in the preimage of a long cycle might be short. 

So how about adapting the proof of the vertex version in some more or less
obvious way? While the existing proof might, and does in our case, 
give some clues, an easy adaption seems hopeless. We believe this is because
edge-disjoint long cycles actually require a mix of the two disjointness concepts.

Why is this?
For simplicity, consider the case $k=2$.
We could construct two long cycles in a graph $G$ as follows.
Choose $2\ell$ vertices $v_1,\ldots,v_\ell$ and $w_1,\ldots,w_\ell$.
For the vertex version, suppose that all these vertices are distinct.
What we now need to do is to find internally vertex-disjoint paths $P_1,\ldots,P_\ell$
 and $Q_1,\ldots,Q_\ell$
such that $P_i$  is a $v_i$--$v_{i+1}$-path and $Q_i$ a $w_i$--$w_{i+1}$-path for every $i=1,\ldots,\ell$
(where we set $v_{\ell+1}=v_1$ and $w_{\ell+1}=w_1$).
In the edge version, we only need to suppose that $v_i\neq v_j$ and $w_i\neq w_j$ for distinct $i,j$.
Again, we seek for paths connecting these vertices in cyclic order.
But, and that is the crucial point,
$P_i$ and $P_j$ as well as $Q_i$ and $Q_j$ need to be internally vertex-disjoint for distinct $i,j$,
while $P_i$ and $Q_j$ only need to be edge-disjoint.
That is, we  deal with two different types of disjointness.

If instead we  only require that all these paths are  edge-disjoint, 
then we obtain immersions of long cycles.
Strikingly, for immersions the adaption of vertex version arguments 
appears to work very well. Indeed, to prove his  strong result about edge-disjoint
immersions, 
Liu~\cite{Liu15} translates a part 
of the graph minor theory to line graphs. (The translation, however, 
is not at all trivial.)

\section{Preliminaries}\label{sec:preliminaries}

In this section we 
\new{introduce a number of tools and some notation. In particular, in Section~\ref{extsec}
we develop a tool for finding a short path (Lemma~\ref{extintlem}); in Section~\ref{framesec}, we treat
\emph{frames}, a structure that captures a good part of the long cycles in the graph; and 
finally in Section~\ref{sepsec}, we investigate how many edges are needed to split 
vertices of a given set into well-linked parts (Lemma~\ref{lem:gatessep2}).}
\new{The reader may find it useful to first skip parts of this section. Indeed, to start with the proof 
of the main theorem, to be found in Section~\ref{sec:mainproof},
 only the definition of a {frame}, in Section~\ref{framesec}, is needed. The rest of this section then 
can be consulted whenever necessary.}

All logarithms $\log n$ will be to base~$2$.
\new{If $G$ is a graph, and $F$ a subgraph of~$G$,
then $G-F$ denotes the graph obtained from $G$ by deleting all vertices of~$F$. 
In contrast, if  $Y\subseteq E(G)$ is an edge set then $G-Y$ is the graph obtained
from $G$ by deleting the edges in $Y$.
}

\subsection{Paths and cycles}
\label{subsec:bpc}
We follow the notation used in the textbook of Diestel~\cite{Die10}.
In particular, we write $P=u\ldots v$ for  a path $P$ with endvertices $u,v$ and say that $P$ is a $u$--$v$-path.
For two vertices $x,y\in V(P)$,
we denote by $xPy$ the subpath of $P$ with endvertices $x,y$.
For an oriented cycle $C$ and $x,y\in V(C)$, 
we also write $xCy$ 
to denote the $x$--$y$-subpath of $C$.
For paths $x_1P_1y_1$ and $x_2P_2y_2$
such that $x_2=y_1$ and otherwise $P_1$ and $P_2$ are disjoint,
we write $x_1P_1x_2P_2y_2$ for the concatenation of $P_1$ and $P_2$.
For two vertex sets $A,B$, we define an \emph{$A$--$B$-path} as a path $P$ such that one endpoint of $P$ lies in $A$ and one in $B$ and $P$ is internally disjoint from $A\cup B$.
For a subgraph $H$ of $G$ (or a vertex set which we treat as a subgraph without edges),
we define an \emph{$H$-path} as a path with two \new{distinct} 
endvertices in~$H$ that is internally disjoint from $H$. \new{A path of length~$1$ 
between two vertices of $H$ is only considered
to be an $H$-path if its single edge is not in $E(H)$.}

For a cycle $C$ and a path $P$, we denote by $\len(C)$ and $\len(P)$
the number of edges of $C$ and $P$, respectively, 
and refer to $\len(C)$ and $\len(P)$ as the \emph{length} of $C$ and $P$,
respectively.

Throughout the article, we fix a positive integer~$\ell$ and 
call  $P$ and $C$ \emph{short} if $\len(P)<\ell$ and $\len(C)<\ell$, respectively.
A cycle is called \emph{long} if its length is at least $\ell$.

\subsection{Extensions of paths}\label{extsec}

The key trick in our proofs is to exclude cycles of intermediate length, that 
is, cycles that are long but not too long. 
\new{That this is possible, is discussed in Section~\ref{sec:mainproof}. 
Later, in the proof of the main theorem, we exclude some cases by showing that 
	otherwise there would be intermediate cycles.
	This is often done by replacing a long path (for example as part of  a cycle) by a short one.
In this subsection we treat a tool that allows us to find such short paths. 
The only lemmas of this subsection that are used later are Lemmas~\ref{extintlem} and~\ref{singlejumplem}.
	}
\medskip

Consider a path $P$ with endvertices $u,v$.
We write $\leq_P$ for the total order of the vertices $V(P)$ induced by the distance from $u$ on $P$.
Let $Q_1,\ldots, Q_r$ be $P$-paths, and for $i=1,\ldots, r$, let  $u_i$ and $v_i$ be the  endvertices 
of $Q_i$ such that $u_i<_Pv_i$.
The tuple $(Q_1,\ldots,Q_r)$  is an \emph{extension} of $P$ if 
\begin{enumerate}[(E1)]
\item the paths $Q_1,\ldots, Q_r$ are pairwise internally disjoint; 
\item the cycle $u_iPv_i\cup Q_i$ is short for $i=1,\ldots, r$; 
\item $u_1=u$ and $v_r=v$;
\item \label{ext:order} $u_i <_P u_{i+1}<_P v_i <_P v_{i+1}$ for $i=1,\ldots, r-1$; and 
\item $v_i\leq_P u_{i+2}$ for $i=1,\ldots, r-2$.
\end{enumerate} 
See Figure~\ref{fig:extension} for an illustration.

\begin{figure}[htb]
\centering
\begin{tikzpicture}[label distance=-0.5mm,auto]

\tikzstyle{jumps}=[ultra thick, white, double distance=1pt, double=black,bend left=80]

\draw[hedge] (0.5,0) -- (7,0);

\node[smallvx, label=below:{\small $u_1$}] (u1) at (0.5,0){};
\node[smallvx, label=below:{\small $v_1=u_3$}] (v1) at (3,0){}; 
\node[smallvx, label=below:{\small $u_2$}] (u2) at (1.5,0){}; 
\node[smallvx, label=below:{\small $v_2$}] (v2) at (4,0){}; 
\node[smallvx, label=below:{\small $v_3$}] (v3) at (5.5,0){};
\node[smallvx, label=below:{\small $u_4$}] (u4) at (5,0){};
\node[smallvx, label=below:{\small $v_4$}] (v4) at (7,0){};

\draw[jumps] (u1) to (v1);
\draw[jumps] (u2) to (v2);
\draw[jumps] (v1) to (v3);
\draw[jumps] (u4) to (v4);
 
\node at (0.0,0) {$P$};
\node at (1.6,1.1) {$Q_1$};
\node at (2.9,1.1) {$Q_2$};
\node at (4.3,1.1) {$Q_3$};
\node at (6,1.1) {$Q_4$};

\end{tikzpicture}
\caption{A $P$-extension.}\label{fig:extension}
\end{figure}
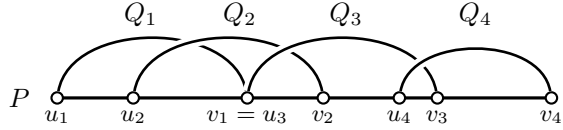

\begin{lemma}\label{lem:uniqueExtension}
Let $P$ be a path, and let $(Q_1,\ldots, Q_r)$ be an extension of $P$.
For any $i,j$ with $1\leq i\le j\leq r$, there is exactly one cycle $C$ 
in $P\cup\bigcup_{s=i}^j Q_s$ that contains $u_i,v_j$. The edge set of the cycle is
\begin{equation}\label{uniquecyc}
E(C)=E\big(P\cup\bigcup_{s=i}^j Q_s\big)\sm \bigcup_{t=i+1}^j E(u_tPv_{t-1}).
\end{equation}
\end{lemma}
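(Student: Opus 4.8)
The plan is to argue by induction on $j-i$, exploiting the nested structure forced by the extension axioms. First I would settle the base case $i=j$: here $P\cup Q_i$ together with the chord $Q_i$ gives a single $\theta$-graph, and the only cycle through both $u_i$ and $v_i$ is $u_iPv_i\cup Q_i$, whose edge set is exactly \eqref{uniquecyc} since the indicated subtraction is empty. For the inductive step, consider the extension $(Q_i,\ldots,Q_j)$ and look at where a cycle $C$ through $u_i,v_j$ can run. The key observation is that by axiom~(E\ref{ext:order}) the endpoints interleave as $u_i<_P u_{i+1}<_P v_i<_P v_{i+1}$, so $u_{i+1}$ and $v_i$ are consecutive ``switch points'': any cycle that uses $P$ near the left end and must reach $v_j$ far to the right is forced, at the overlap interval $u_{i+1}Pv_i$, either to traverse it along $P$ or to bypass it via $Q_i$ then $Q_{i+1}$. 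I would show these two routes are the only options and that exactly one of them extends to a cycle through $u_i$ and $v_j$.

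The cleanest way to organize this is to peel off $Q_i$. I claim $C$ must use the edge of $P$ incident to $u_i$ on the right, or else use $Q_i$ — because $u_i=u$ is the left endpoint of $P$ (axiom (E3)) and has degree at most $3$ in $P\cup\bigcup Q_s$, with its $P$-neighbour and $Q_i$ being the only non-trivial choices. Suppose first $C$ uses $Q_i$. Then $C$ enters $v_i$ from $Q_i$; since by (E5) no $Q_t$ with $t\ge i+2$ has an endpoint in the open interval $(u_i,v_i)$, and the only $P$-paths meeting that interval other than $Q_i$ itself is $Q_{i+1}$ (again by the interleaving and (E5)), $C$ must leave $v_i$ along $P$ toward $v_{i+1}$ and reach $u_{i+1}$; but $u_{i+1}$ is then a degree-$2$ vertex of $C$ lying strictly inside, contradiction unless... — this is where I need to be careful, and I would instead argue that $C$ restricted to $P\cup\bigcup_{s=i+2}^j Q_s$ together with the detour $u_iQ_iv_i$ must, after identifying $v_i$ with the relevant endpoint, be a cycle through $v_i,v_j$ in the smaller extension $(Q_{i+1},\ldots,Q_j)$ — wait, that is not quite a sub-extension. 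So the actual induction should be: the graph $P\cup\bigcup_{s=i}^j Q_s$ after deleting the open segment $u_{i+1}Pv_i$ and the path $Q_i$ and contracting appropriately is again of the same form with one fewer $P$-path, and cycles correspond bijectively.

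Concretely, I would split on whether $C$ contains the edges of $u_{i+1}Pv_i$. If yes, then $Q_i$ is a chord of $C$ spanning exactly the segment $u_iPv_i$, so $C$ does not use $Q_i$, and $C\sm u_iPv_i$ is (after replacing the arc $u_iPv_i$ by nothing, i.e.\ working in $(u_iPv_i)\cup\bigcup_{s=i+1}^j Q_s$ viewed with $P':=u_iPv\setminus(u_iPu_{i+1})$... ) — the honest statement is: $C - (u_i P u_{i+1})^{\circ}$ minus $Q_i$ is a cycle through $u_{i+1},v_j$ in $P'\cup\bigcup_{s=i+1}^jQ_s$ where $P'=u_{i+1}Pv$ and $(Q_{i+1},\ldots,Q_j)$ is readily checked to be an extension of $P'$ (axioms (E1)–(E5) are inherited, with $u_{i+1}$ now the left endpoint, using $u_{i+1}\le_P u_{i+1}$ trivially and (E5) for the rest). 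If instead $C$ avoids some edge of $u_{i+1}Pv_i$, then since the only $P$-paths with an endpoint in $[u_{i+1},v_i]$ are $Q_i$ and $Q_{i+1}$, the cycle must use both $Q_i$ and $Q_{i+1}$ to cross, forcing $v_iPv_{i+1}\subseteq E(C)$ and giving a cycle through $v_i,v_j$ — I then apply induction to $(Q_{i+1},\ldots,Q_j)$ on $P'':=u_{i+1}Pv$... Both branches reduce $j-i$ by one and, crucially, the uniqueness at each branch point (forced by the low degrees at $u_i$ and the bottleneck interval) shows there is exactly one cycle overall; unwinding the recursion gives precisely the edge set in \eqref{uniquecyc}, since each step removes exactly one segment $E(u_tPv_{t-1})$.

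The main obstacle I anticipate is bookkeeping the induction so that the ``smaller'' object is genuinely an extension — in particular checking that axioms (E4) and (E5) survive the removal of the first $P$-path and the restriction of $P$ to a subpath, and handling the degenerate overlaps where $v_{t-1}=u_{t+1}$ or consecutive endpoints coincide (as in the figure where $v_1=u_3$). A secondary subtlety is making the ``only $Q_i$ and $Q_{i+1}$ meet the bottleneck interval'' claim airtight from (E4) and (E5) alone; this is the combinatorial heart and I would isolate it as a preliminary observation before running the induction.
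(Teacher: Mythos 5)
The strategy — induction on $j-i$ by peeling off $Q_i$ — is a genuinely different route from the paper's direct argument (the paper fixes the whole cycle at once: it first shows all of $Q_i,\ldots,Q_j$ lie on $C$ via the separation property coming from (E5), and then determines the $P$-segments using the degree-$2$ constraint at each $v_{t-1}$ together with a cyclic-order argument). Your underlying structural observation — that (E4) and (E5) force $u_{i+1}$ and $v_i$ to be a bottleneck crossed only by $Q_i$ and $Q_{i+1}$ — is the right combinatorial heart of the matter and is essentially the same fact the paper exploits.

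However, as written there is a genuine gap, located precisely where you flag your own uncertainty. You split on whether $C\supseteq E(u_{i+1}Pv_i)$ and treat both branches as viable, but the whole content of the lemma is that the ``yes'' branch is impossible whenever $j>i$: if $u_{i+1}Pv_i\subseteq C$, then (since $u_{i+1}$ has no $Q$-endpoint strictly inside $(u_{i+1},v_i)$) you are forced to $u_iPv_i\subseteq C$ as well, and then $u_i$ having degree~$2$ in $C$ forces $Q_i\subseteq C$, so $C$ equals the short cycle $u_iPv_i\cup Q_i$ and cannot reach $v_j$. You never carry this out; instead you assert within the ``yes'' branch that ``$C$ does not use $Q_i$'' and then try to peel off $(u_iPu_{i+1})^{\circ}$ and $Q_i$ to get a smaller cycle — but deleting the open segment $(u_iPu_{i+1})^{\circ}$ from a cycle disconnects it, so that object is not a cycle, and the reduction does not go through. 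The ``no'' branch is the one that carries the argument, and there too you invoke the lemma on the shorter extension $(Q_{i+1},\ldots,Q_j)$ over $u_{i+1}Pv$, but you need the explicit bijection $C\mapsto (C-Q_i-u_iPu_{i+1})\cup u_{i+1}Pv_i$ between cycles through $u_i,v_j$ and cycles through $u_{i+1},v_j$, and you only gesture at it. Finally, your argument repeatedly uses ``$u_i=u$ is the left endpoint of $P$,'' which is only true when $i=1$; for general $i$ you must first pass to the restricted extension over $u_iPv_j$ (which is how the formula \eqref{uniquecyc} must be read anyway, since otherwise $u$ would have degree~$1$). None of these are fatal to the strategy, but as a proof the argument is not yet closed: it establishes ``at most one cycle per branch'' without ruling out the vacuous branch, and the reduction step in that branch is incorrectly stated.
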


\begin{proof}
The graph $H=P\cup\bigcup_{s=i}^j Q_s$ is $2$-connected as it is the union of 
cycles $u_sPv_s\cup Q_s$, such that consecutive cycles overlap in an edge. Thus
the graph $H$ contains a cycle~$C$ through $u_i$ and $v_j$.

Note that $C$ has to contain each of $Q_i,\ldots, Q_{j-1}$:
if $Q_t\nsubseteq C$ for a $t\in \{i,\ldots, j-1\}$ then, by (E5), $u_{t+1}$ separates $u_i$
and $v_j$ in $C$ , which is impossible. We also have $Q_j\subseteq C$ as otherwise~$v_j$
would have degree~$1$ in $C$ as $v_j\not\in Q_{j-1}$ by (E4). 

Now, for $t=i+1,\ldots, j$ the vertex $v_{t-1}$ has degree~$2$ in $C$. Therefore, 
either $u_tPv_{t-1}\subseteq C$ or $v_{t-1}Pu_{t+1}\subseteq C$
(where we temporarily interpret $u_{j+1}$ as $v_j$). 
However, $\{u_t,v_{t-1}\}$
separates $u_i$ from $v_j$ in $H$, which means that $C$ has to pass through \new{$u_i,u_t,v_j,v_{t-1}$ in this cyclic order.}
Thus $v_{t-1}Pu_{t+1}\subseteq C$ and $u_tPv_{t-1}\nsubseteq C$ (since already $Q_{t-1}\subseteq C$).
It is easy to check that this fixes $C$ to be as in~\eqref{uniquecyc}.
\end{proof}

\begin{lemma}\label{claim:cycleInExtensionIsShort}
Let $P$ be a path, and let $(Q_1,\ldots, Q_r)$ be an extension of $P$. 
Assume that every long cycle in $H=P\cup\bigcup_{s=1}^r Q_s$
has length at least~$2\ell$. 
Then every cycle in $H$ is short.
\end{lemma}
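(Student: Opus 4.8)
The plan is to show that, under the hypothesis, every cycle of $H$ must coincide with one of the cycles furnished by Lemma~\ref{lem:uniqueExtension}, and that this family of cycles cannot contain a long cycle: each such cycle can be built up by adding the $Q_s$ one at a time, each step changing the length by less than~$\ell$; so if one of them were long, some ``initial piece'' of it would be a long cycle of length below~$2\ell$, contrary to the assumption.

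First I would analyse an arbitrary cycle $C$ in $H$. Since the $Q_s$ are pairwise internally disjoint and internally disjoint from $P$, the cycle $C$ contains each $Q_s$ entirely or uses none of its edges; let $S=\{s: Q_s\subseteq C\}$, which is non-empty as $P$ is acyclic, and put $a=\min S$ and $b=\max S$. Then $u_a,v_b\in V(C)$. The edges of $C$ that avoid all $Q_s$ are edges of $P$, and they form arcs of $P$ whose endpoints are endpoints of the $Q_s$ with $s\in S$; by the monotonicity in (E4) all these endpoints lie in $V(u_aPv_b)$, so $C\subseteq H_{a,b}:=u_aPv_b\cup\bigcup_{s=a}^b Q_s$. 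One checks that $(Q_a,\ldots,Q_b)$ is an extension of the path $u_aPv_b$: axioms (E1), (E2), (E4), (E5) are inherited (for (E2) note $u_sPv_s\cup Q_s$ is the same cycle as before), and (E3) holds since $u_a,v_b$ are precisely the endpoints of $u_aPv_b$. Hence Lemma~\ref{lem:uniqueExtension} applies to $H_{a,b}$ and shows it has a unique cycle through $u_a$ and $v_b$; as $C$ is such a cycle, $C$ equals it. Denote this cycle by $C_{a,b}$; in particular $D_s:=u_sPv_s\cup Q_s=C_{s,s}$, and by (E2) every $C_{s,s}$ is short. Thus every cycle of $H$ is of the form $C_{a,b}$ for some $1\le a\le b\le r$.

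Next I would bound $\ell(C_{a,b})$. Fix $a$ and apply Lemma~\ref{lem:uniqueExtension} to the extension $(Q_a,\ldots,Q_j)$ of $u_aPv_j$; by the edge-set formula \eqref{uniquecyc},
\[
\ell(C_{a,j})=\ell(u_aPv_j)+\sum_{s=a}^{j}\ell(Q_s)-\sum_{t=a+1}^{j}\ell(u_tPv_{t-1}),
\]
since the $Q_s$ are edge-disjoint from one another and from $P$, while the subpaths $u_tPv_{t-1}$ of $u_aPv_j$ are pairwise edge-disjoint because $v_{t-1}\le_P u_{t+1}$ by (E5). Subtracting this identity for $j$ from the one for $j+1$, and using $u_{j+1}<_P v_j<_P v_{j+1}$ from (E4),
\[
\ell(C_{a,j+1})-\ell(C_{a,j})=\ell(v_jPv_{j+1})+\ell(Q_{j+1})-\ell(u_{j+1}Pv_j)=\ell(D_{j+1})-2\,\ell(u_{j+1}Pv_j),
\]
which is at most $\ell-1$ because $D_{j+1}$ is short. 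Now suppose some cycle of $H$ is long; it equals some $C_{a,b}$, and $\ell(C_{a,b})\ge 2\ell$ by hypothesis. Since $\ell(C_{a,a})=\ell(D_a)<\ell$, let $j$ be least with $a<j\le b$ and $\ell(C_{a,j})\ge\ell$; then $\ell(C_{a,j-1})<\ell$, hence $\ell(C_{a,j})\le\ell(C_{a,j-1})+(\ell-1)<2\ell$. So $C_{a,j}$ is a long cycle of $H$ of length less than $2\ell$, a contradiction. Therefore every cycle of $H$ is short.

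I expect the delicate point to be the reduction in the second paragraph: verifying that an arbitrary cycle of $H$ really does collapse to one of the $C_{a,b}$, in particular that the sub-tuple $(Q_a,\ldots,Q_b)$ is again an extension so that Lemma~\ref{lem:uniqueExtension} can be invoked, and keeping the edge bookkeeping in the length formula honest (the pairwise edge-disjointness of the subtracted subpaths, which is exactly where (E5) is used). The telescoping identity and the final ``no jump over $[\ell,2\ell)$'' argument are then routine.
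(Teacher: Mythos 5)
Your proof is correct and rests on the same key observation as the paper's: by the edge-set formula of Lemma~\ref{lem:uniqueExtension}, consecutive cycles $C_{a,j}$ and $C_{a,j+1}$ differ (in symmetric difference) by the short cycle $u_{j+1}Pv_{j+1}\cup Q_{j+1}$, so their lengths differ by less than $\ell$, leaving no way to jump from below $\ell$ to $2\ell$ or beyond. The paper packages this as a minimal-counterexample argument (take a long cycle with $j-i$ minimal and compare with the cycle for $j-1$) rather than your classification of all cycles as $C_{a,b}$ followed by a discrete intermediate-value step, but the two are the same argument in different clothing.
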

\begin{proof}
Suppose that $H$ contains a long cycle $C$.
Clearly, its intersection with $P$ is nonempty.
Let $i$ be the smallest index such that $u_i$ lies in $C$, and let $j$ 
be the largest index with $v_j\in V(C)$. Note that $i<j$ 
by the definition of extensions. 
We, furthermore, assume $C$
to be chosen such that $j-i$ is minimal.
Thus $C\subseteq u_iPv_j \cup \bigcup_{s=i}^j Q_s$.

The cycle $C$ satisfies the conditions of Lemma~\ref{lem:uniqueExtension},
which implies that its edge set is as in~\eqref{uniquecyc}. 
Let $C'$ be the unique cycle in $u_iPv_{j-1} \cup \bigcup_{s=i}^{j-1} Q_s$ containing $u_i$ and $v_{j-1}$.
Hence $C'$ is short by the choice of $C$, and its edge set is given by~\eqref{uniquecyc}---with $j-1$ instead of $j$. 
Then, $E(C)\Delta E(C')$ is equal to $u_jPv_j\cup Q_j$,
which is a short cycle by~(E2).
As $|E(C)|\leq |E(C')|+|E(C\Delta C')|< 2\ell$,
the length of the long cycle $C$ is less than~$2\ell$, which 
contradicts the assumption of the lemma.
\end{proof}

\new{We now prove the main lemma of the subsection.}
\begin{lemma}\label{extintlem}
Let $P$ be a path in a graph $G$, and let $(Q_1,\ldots, Q_r)$ be a tuple of $P$-paths 
that satisfy~{\rm(E2)--(E4)} and
\begin{enumerate}[\rm (E1$'$)]
\item if $|i-j|>1$, then $Q_i$ and $Q_j$ are internally disjoint.
\end{enumerate}
If every long cycle in $G$ has length at least $2\ell$, then there is 
\new{a short path between the endvertices of $P$ that is contained in 
$P\cup\bigcup_{j=1}^rQ_j$.}
\end{lemma}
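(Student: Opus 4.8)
The plan is to reduce to the situation of Lemma~\ref{claim:cycleInExtensionIsShort} by turning the tuple $(Q_1,\ldots,Q_r)$ into a genuine extension, possibly after discarding some of the $Q_j$. Observe that (E1$'$) allows only \emph{consecutive} $P$-paths $Q_i,Q_{i+1}$ to share interior vertices, whereas an extension additionally requires (E1) (full internal disjointness) and (E5) ($v_i\le_P u_{i+2}$). So first I would show that, at the cost of deleting intermediate paths, one can assume (E5) holds: if some $v_i >_P u_{i+2}$, then the three consecutive paths $Q_i,Q_{i+1},Q_{i+2}$ overlap heavily on $P$, and I would argue that $Q_{i+1}$ can be dropped — the pair $Q_i,Q_{i+2}$ still covers the stretch of $P$ between $u_i$ and $v_{i+2}$ because their $P$-projections overlap, so (E3) and (E4) can be restored by reindexing, and (E2) is untouched since we only delete paths. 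Iterating this cleanup yields a subtuple satisfying (E2)--(E4) and (E5); and since we never keep three mutually interior-intersecting paths, (E1$'$) among the survivors becomes honest (E1).

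Having produced a bona fide extension $(Q_{i_1},\ldots,Q_{i_m})$ of $P$ inside $H:=P\cup\bigcup_j Q_j$, I would invoke Lemma~\ref{claim:cycleInExtensionIsShort}: since every long cycle in $G$, hence in $H$, has length at least $2\ell$, every cycle in $H':=P\cup\bigcup_{s=1}^m Q_{i_s}$ is short. In particular, take any cycle $C$ in $H'$ through the two endvertices $u,v$ of $P$ — one exists because $H'$ is $2$-connected (it is a chain of overlapping short cycles, as in the proof of Lemma~\ref{lem:uniqueExtension}, with $u_{i_1}=u$ and $v_{i_m}=v$ by (E3)). Then $C$ contains a $u$--$v$-path, and this path is short since $\len(C)<\ell$. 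As $H'\subseteq H = P\cup\bigcup_{j=1}^r Q_j$, the path lies in the required subgraph, and we are done.

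The main obstacle is the cleanup step: checking carefully that deleting the ``middle'' path of a triple violating (E5) genuinely restores all the extension axioms after reindexing, and that this can be done globally (not just locally) without breaking already-fixed parts of the ordering. Concretely, I would set up the deletion greedily from left to right: scan the indices in increasing order, and whenever the current survivor $Q_a$ and the next-but-one survivor would witness $v_a >_P u_c$ with $c$ the second survivor after $a$, delete the intervening survivor; because $P$-projections of consecutive survivors overlap by (E4), the union of projections is an interval of $P$ covering $[u,v]$ throughout, so (E3)/(E4) persist. A small point to verify is that after this process no two kept paths with index gap exactly one fail (E1); but any such pair, together with the path that used to sit between them, was a triple we already handled, so (E1$'$) upgrades to (E1) on the survivors. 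Everything else — (E2) is inherited, the $2$-connectivity of $H'$, extracting the short $u$--$v$-path from a short cycle — is routine given the earlier lemmas.
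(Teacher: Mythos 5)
There is a genuine gap in the cleanup step: you only delete middle paths when (E5) fails, but you never address the case where two \emph{originally consecutive} $Q_i,Q_{i+1}$ share an internal vertex while (E5) holds for the relevant triple. Your sentence ``since we never keep three mutually interior-intersecting paths, (E1$'$) among the survivors becomes honest (E1)'' does not follow: (E1) fails already when a single \emph{pair} of consecutive survivors shares an interior vertex, and your later remark that ``any such pair, together with the path that used to sit between them, was a triple we already handled'' only applies when a path between them was deleted, i.e.\ when their original index gap exceeded $1$ --- in which case (E1$'$) already gives disjointness and there is nothing to prove. The problematic case, two survivors at original index gap exactly $1$, is untouched by your procedure. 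A minimal counterexample is $r=2$ with $Q_1,Q_2$ sharing an internal vertex and $u_1<_Pu_2<_Pv_1<_Pv_2$: here (E5) is vacuous, your cleanup is a no-op, and $(Q_1,Q_2)$ is not an extension.

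The missing idea, used in the paper, is a \emph{merging} step rather than a deletion: when consecutive $Q_i,Q_{i+1}$ share an internal vertex, follow $Q_i$ from $u_i$ to the first vertex $x$ common to both paths and replace the pair by the single $P$-path $R=u_iQ_ixQ_{i+1}v_{i+1}$. One then checks that the cycle $R\cup u_iPv_{i+1}$ has length below $\ell(Q_i\cup u_iPv_i)+\ell(Q_{i+1}\cup u_{i+1}Pv_{i+1})<2\ell$, hence is short by the hypothesis on $G$, so (E2) survives; (E3), (E4), (E1$'$) are inherited, and the tuple shortens. Taking a tuple of minimum length simultaneously rules out both consecutive interior intersections (via this merge) and (E5) violations (via your deletion), which is exactly how the paper proceeds. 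Your argument handles only the second obstruction, so it needs to be supplemented by this merging construction to be complete.
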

\begin{proof}
	Among all tuples $(Q'_1, \ldots, Q'_s)$ of $P$-paths in $\bigcup_{j=1}^r Q_j$
	that satisfy (E2)--(E4) and {\rm (E1$'$)}, choose a tuple $T'=(Q'_1, \ldots, Q'_s)$ such that $s$ is minimal.
	Such a tuple exists as $(Q_1, \ldots, Q_r)$ satisfies (E2)--(E4) and {\rm (E1$'$)}.
	Let $u'_i$ and $v'_i$ be the endvertices of $Q'_i$ such that $u'_i<_P v'_i$ holds.
	
	Now, assume that there are two paths $Q'_i$ and $Q'_j$, $i<j$, that share an internal vertex.
	By {\rm (E1$'$)} we have $j=i+1$. Following $Q'_i$ from $u'_i$ on, 
	let $x$ be the first vertex of $Q'_i - u'_i$ that also belongs to $Q'_{i+1}$.
	Now define a new path $R$ as $R=u'_i Q'_i x Q'_{i+1} v'_{i+1}$.
	The path $R$ is a $P$-path as $x$ is an internal vertex and its endpoints are $u'_i$ and $v'_{i+1}$.
	Furthermore, the length of the cycle $R\cup u'_iPv'_{i+1}$ is at most 
	\[
	\ell(Q'_i\cup u'_iPv'_i) + \ell(Q'_{i+1}\cup u'_{i+1}Pv'_{i+1}) <  2\ell,
	\] 
	which implies that $R$ is short, by assumption.
	
	Now, the tuple $T''=(Q_1, \ldots, Q'_{i-1}, R, Q'_{i+2},\ldots Q'_s)$ satisfies (E2)--(E4) and {\rm (E1$'$)} 
	as (E2) was just proved, (E3) is trivial, 
	and (E4) and {\rm (E1$'$)} are inherited from $T'$ as $R$ just combines two consecutive paths of $T'$.
	However, $T''$ uses only $s-1$ paths,
	which contradicts the choice of $T'$. Thus, there are no such paths $Q'_i,Q'_j$ that share an internal vertex and hence $T'$ satisfies (E1).
	
	Assume, that $T'$ does not satisfy (E5); that is, there is an $i$ such that $u'_{i+2} <_P v'_i$.
	By (E4), we have $u'_i<_P u'_{i+1} <_P u'_{i+2}$ and $v'_i <_P v'_{i+1} <_P v'_{i+2}$ which implies 
	\[
		u'_i <_P u'_{i+2} <_P v'_i <_P v'_{i+2}.
	\]
	This is the statement of (E4) for the paths $Q'_i$ and $Q'_{i+2}$ which makes $Q_{i+1}$ unnecessary in $T$.
	This is again a contradiction to the minimality of $s$.
	Thus, the tuple $T'$ satisfies (E1)--(E5) and is therefore an extension of $P$.

\new{Now, Lemmas~\ref{lem:uniqueExtension}
and~\ref{claim:cycleInExtensionIsShort} imply that there is a short cycle in $P\cup\bigcup_{j=1}^rQ'_j$,
and thus in $P\cup\bigcup_{j=1}^rQ_j$,
through the endvertices of $P$. The cycle then contains the desired path.}
\end{proof}

\new{For later use, we prove a convenience lemma that helps constructing the paths $Q_i$ in Lemma~\ref{extintlem}.} 
\begin{lemma}\label{singlejumplem}
Let $P$ be a path in a graph $G$, and 
let  $C_1,\ldots, C_r$ be a set of short cycles such that 
\begin{enumerate}[\rm (i)]
\item $C_i\cap P=u_iPv_i$ for two (not necessarily distinct) vertices $u_i, v_i$, for $i=1,\ldots, r$;
\item $C_i$ and $C_{i+1}$ meet outside $P$  
for $i=1,\ldots, r-1$; and
\item $u_iPv_i$ and $u_{i+1}Pv_{i+1}$ meet for $i=1,\ldots, r-1$.
\end{enumerate}
If every long cycle in $G$ has length at least $3\ell$, then there is a 
short cycle $C\subseteq\bigcup_{i=1}^rC_i$
such that $C\cap P=u_1Pv_r$.
\end{lemma}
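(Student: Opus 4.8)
The plan is to reduce Lemma~\ref{singlejumplem} to Lemma~\ref{extintlem} by turning the short cycles $C_1,\ldots,C_r$ into a tuple of $P$-paths $Q_1,\ldots,Q_s$ that satisfy (E2)--(E4) and (E1$'$), applied to a suitable subpath of $P$. First I would orient $P$ and relabel so that $u_i\leq_P v_i$ for each~$i$; note that each $C_i$ meets $P$ in a (possibly trivial) subpath $u_iPv_i$, and the arc $A_i=C_i-\inn{u_iPv_i}$ (the part of $C_i$ strictly outside $P$, together with its two attachment points) is a $P$-path from $u_i$ to $v_i$ whenever $u_i\neq v_i$; when $u_i=v_i$ the cycle $C_i$ is entirely a ``$P$-ear'' hanging at a single vertex. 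The key observations are: $\ell(A_i\cup u_iPv_i)=\ell(C_i)<\ell$, so (E2) holds for the $A_i$; condition~(ii) says $C_i$ and $C_{i+1}$ meet outside $P$, which forces $A_i$ and $A_{i+1}$ to share an internal vertex; and condition~(iii) says the intervals $[u_i,v_i]$ and $[u_{i+1},v_{i+1}]$ on~$P$ overlap.

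Next I would clean up the sequence. Discard any $A_i$ whose interval $[u_i,v_i]$ is contained in the union of the neighbouring intervals in a way that makes it redundant, and more importantly handle monotonicity: using~(iii), the left endpoints and right endpoints can be assumed weakly increasing after deleting redundant cycles (a cycle $C_i$ with $[u_i,v_i]\subseteq[u_{i-1},v_{i-1}]$ can be dropped because $C_{i-1}$ and $C_{i+1}$ still overlap on~$P$, and by transitivity through the shared outside-vertices one can splice; this is exactly the kind of minimality argument used in the proof of Lemma~\ref{extintlem}). The real work is to arrange that consecutive arcs overlap on $P$ in the staggered pattern of~(E4), namely $u_i<_Pu_{i+1}<_Pv_i<_Pv_{i+1}$, and that non-consecutive arcs are internally disjoint~(E1$'$). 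Internal disjointness of non-consecutive arcs does not come for free from the hypotheses, so here I would instead take a minimal sub-selection and, wherever $A_i$ and $A_j$ with $|i-j|>1$ share an internal vertex $x$, replace the whole block $A_i,\ldots,A_j$ by the single $P$-path $u_iA_ixA_jv_j$; its associated cycle has length at most $\ell(C_i)+\ell(C_j)<2\ell$, hence is short by the $3\ell$-hypothesis (the slack between $2\ell$ and $3\ell$ is what gives us room, exactly as the $2\ell$-versus-$\ell$ slack did in Lemma~\ref{extintlem}). Iterating shrinks the index set, so this terminates.

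Once I have a tuple $Q_1,\ldots,Q_s$ of $P$-paths from $\tilde u=u_1$ to some $v$, chosen minimal, satisfying (E2), (E1$'$), and the monotone-overlap conditions (E3$'$), (E4), I apply Lemma~\ref{extintlem} to the subpath $P'=u_1Pv_r$ of~$P$ (noting $v_r$ is the right end of the last surviving interval, which equals the original $v_r$ once we check no cycle with right endpoint $v_r$ gets discarded) together with these $Q_j$'s. Since every long cycle in $G$ has length at least $3\ell\geq 2\ell$, Lemma~\ref{extintlem} produces a short $u_1$--$v_r$-path $R$ inside $P'\cup\bigcup_j Q_j\subseteq\bigcup_i C_i$. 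Then $C:=R\cup u_1Pv_r$ is a cycle; it is short because $\ell(R)<\ell$ and... no, that is not automatic since $u_1Pv_r$ may be long. Instead I take $C$ to be the short cycle through the endvertices of $P'$ guaranteed in the last paragraph of the proof of Lemma~\ref{extintlem} (via Lemmas~\ref{lem:uniqueExtension} and~\ref{claim:cycleInExtensionIsShort} applied with $P'$): that cycle lies in $P'\cup\bigcup_j Q_j$, is short, and passes through $u_1$ and $v_r$; by construction its intersection with $P$ is precisely $u_1Pv_r$, since all the $Q_j$ are internally disjoint from $P$ and the cycle uses the full arc $u_1Pv_r$ of~$P$. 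This $C$ is the desired cycle.

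The main obstacle I anticipate is the bookkeeping in the cleanup step: showing that after discarding redundant cycles one genuinely recovers the staggered overlap pattern~(E4) and that the right endpoint of the last interval is still the original $v_r$ (so that $C\cap P=u_1Pv_r$ and not a proper subpath). One has to be careful that an arc $A_i$ with $u_i=v_i$ (a cycle touching $P$ in one vertex) is handled correctly --- such a one-point touch can always be merged into a neighbour using condition~(ii) --- and that the merging-at-a-shared-outside-vertex operation in the reduction to (E1$'$) does not destroy the property $C\subseteq\bigcup_i C_i$, which it does not since every $Q_j$ is built from pieces of the $C_i$. Everything else is a direct invocation of the lemmas already proved, with the single numerical input that $3\ell$ gives enough slack to absorb one merge of two short cycles into something of length below $2\ell$.
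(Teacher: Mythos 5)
Your proposal has a decisive gap at the very last step, and it is a structural one rather than a bookkeeping issue.

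You want to obtain the desired cycle as ``the short cycle through the endvertices of $P'$ guaranteed in the last paragraph of the proof of Lemma~\ref{extintlem}'' and you assert that ``by construction its intersection with $P$ is precisely $u_1Pv_r$, since \dots\ the cycle uses the full arc $u_1Pv_r$.'' That is false. The cycle produced via Lemmas~\ref{lem:uniqueExtension} and~\ref{claim:cycleInExtensionIsShort} has edge set given by~\eqref{uniquecyc}, namely
\[
E(C)=E\Bigl(P\cup\bigcup_{s=i}^j Q_s\Bigr)\setminus\bigcup_{t=i+1}^{j} E(u_tPv_{t-1}),
\]
and by~(E4) the overlap segments $u_tPv_{t-1}$ are nontrivial, so they are genuinely removed. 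In words, the extension-lemma cycle zig-zags: it alternately follows $P$ and jumps over the overlaps along the $Q_t$. Its intersection with $P$ is therefore a union of several disjoint subarcs of $u_1Pv_r$, not the whole arc. This is exactly why the extension cycle manages to be short even though $u_1Pv_r$ itself may be long. So the object Lemma~\ref{extintlem} hands you cannot satisfy the conclusion $C\cap P=u_1Pv_r$ of Lemma~\ref{singlejumplem} whenever $r\geq 2$, and you already noticed that the naive alternative $R\cup u_1Pv_r$ fails for the opposite reason (it contains all of $u_1Pv_r$ but need not be short). The two requirements ``contains all of $u_1Pv_r$'' and ``is short'' are in tension, and the extension machinery resolves the tension by dropping the first requirement, which is precisely the one Lemma~\ref{singlejumplem} insists on.

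The paper's proof avoids this by not using the extension lemmas at all. It proceeds by direct induction on $r$: it builds a cycle consisting of the entire arc $u_1Pv_r$ together with a single $u_1$--$v_r$-path that lies wholly outside $P$, obtained by splicing the outside arcs $Q_i=C_i-u_iPv_i$ at shared vertices (using hypothesis~(ii) to guarantee consecutive arcs meet outside $P$, and possibly routing through $Q_{r-1}$ to bridge $Q'$ and $Q_r$). Shortness is then an additive estimate $\ell(C)\le\ell(C')+\ell(C_{r-1})+\ell(C_r)<3\ell$ combined with the $3\ell$ gap hypothesis. So the $3\ell$ slack is being used to absorb three short cycles in a single splice, not to absorb one merge as you suggest, and the cycle is assembled on the ``outside'' of $P$, never on the $P$-side. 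If you want to repair your approach you would have to replace the final invocation of Lemma~\ref{extintlem} by an argument that produces a $u_1$--$v_r$-path internally disjoint from $P$; at that point you are essentially redoing the paper's induction.
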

\begin{proof}
By induction on $r$ we show that: 
there is a 
short cycle $C\subseteq\bigcup_{i=1}^rC_i$
such that $C\cap P=u_1Pv_r$ and such that $C$ contains an edge 
in $E(C_r)\sm E(P)$ that is incident with $v_r$. 

The induction starts with $C=C_1$. Now, let $C'$ be such a cycle for $r-1$.
For every $i$, let $Q_i$ be the path $C_i-u_iPv_i$, and let $p_i$ and $q_i$
be its endvertices such that $p_i$ is a neighbour of $u_i$ in $C_i$ 
and $q_i$ a neighbour of $v_i$ in $C_i$. We define $Q'$ with endvertices $p',q'$
in the analogous way as $Q'=C'-u_1Pv_{r-1}$. 
 

Assume first that $C'$ and $C_r$ meet outside $P$.
Starting in $p'$ let $x$ be the first vertex in $Q'$ that lies in $Q_r$. 
Then put $C=u_1p'Q'xQ_rq_rv_r\cup u_1Pv_r$ and observe that $C$ satisfies all required properties
if, in addition, it is short. 
This holds, as $\ell(C)\le \ell(C')+(\ell(Q_r)+\ell(u_rPv_r)) \new{+ \ell(C_{r-1})} < \ell + \ell \new{+\ell}= \new{3}\ell$.

Next, assume that $Q'$ and $Q_r$ are disjoint outside $P$. 
Since the edge $q'v_{r-1}$ of $C'$ is an edge of $C_{r-1}$ we see that $q'\in V(Q_{r-1})$,
which means that $Q'$ and $Q_{r-1}$ have a vertex in common.
Starting from $p'$ let $y$ be the first vertex of $Q'$ that lies in $Q_{r-1}$.
Starting from $q_r$ let $z$ be the first vertex in $Q_r$ that lies in $Q_{r-1}$.
Since
$C_{r-1}$ and $C_r$ meet outside $P$, by~(ii), there is such a vertex $z$.
Put $C=u_1p'Q'yQ_{r-1}zQ_rq_rv_r \cup u_1Pv_r$ and observe that, again,  
$C$ satisfies all required properties
if it is short.

We now prove that $C$ is a short cycle. 
Using~(iii), we see that
\begin{align*}
\ell(C) &\leq \ell(C') + \ell(C_{r-1})+\ell(C_r)\\
&< \ell + \ell + \ell=3\ell, 
\end{align*}
as $C'$ is short by induction and as the other two 
terms are smaller than~$\ell$ as well. Thus, the length of the cycle $C$
is smaller than~$3\ell$, which means it is a short cycle.
\end{proof}

\subsection{Frames}\label{framesec}
Simonovits' short proof  of the Erd\H os-P\'osa theorem
rests on a \new{\emph{frame}, a }maximal subgraph of the ambient graph $G$, in which all 
the disjoint cycles are found~\cite{Sim67}. We mimic this approach
that also appears in other works~\cite{BJS14,PW12}.
However, in contrast to all such previous approaches, in our case this subgraph is not subcubic, but may have arbitrary high maximum degree.

\new{There is one more difference between our approach and that of Simonovits.
 In Simonovits' proof, there is a dichotomy: if 
the frame is \emph{large}, with respect to some appropriate measure, 
then there are $k$ disjoint cycles, 
and if the frame is \emph{small}
then it yields a hitting set. In our approach a large frame still contains $k$ edge-disjoint 
long cycles. 
A small frame, however, can still lead to both outcomes. We might find a hitting
edge set or we might find $k$ edge-disjoint cycles, but these will normally not be 
contained in the frame but also use parts outside the frame.
}

Any subgraph $F$ of a graph $G$ is a \emph{frame} of $G$ if its minimum degree~$\delta(F)$
is at least~$2$ and if every cycle in $F$ is long. 
For a frame $F$ of $G$, we define 
\begin{itemize}
\item $U(F)=\{v\in V(F) : d_F(v)\ge 3\}$, the set of vertices of degree at least~$3$ in~$F$; and
\item $\displaystyle \ds(F)=\sum_{u\in U(F)} d_F(u)$, the sum of the degrees of the vertices in $U(F)$.
\end{itemize}
In the proof we will choose a frame $F$ such that \new{$\ds(F)$ is maximal.}
The main motivation stems from the fact that large values in $\ds(F)$  yield $k$ edge-disjoint long cycles in $F$.
In the next lemma we collect a number of useful properties about frames.
\new{Lemma~\ref{framelem} is the only lemma of this subsection that is used later.}

\begin{lemma}\label{framelem}
\new{Let $G$ be a connected graph and let $F\subseteq G$ be a frame such that $\ds(F)$ is maximal.}
Then
\begin{enumerate}[\rm (i)]
\item $F$ is connected;
\item if $\ds(F)\geq 84k\log k$, then $G$ contains $k$ edge-disjoint long cycles;
\item every $F$-path is short; and
\item \label{uniqueShadow} 
there exists a short path $P=u\ldots v \subseteq F$ for every $F$-path $Q=u\ldots v$.
This path is unique if every long cycle in $G$ has length at least $2\ell$.
\end{enumerate}
\end{lemma}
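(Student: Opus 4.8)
The plan is to get parts (i), (iii) and the existence half of (iv) out of the \emph{maximality} of $\ds(F)$ by one and the same exchange principle, and to treat part (ii) separately, as the one genuinely substantive point. Two observations will be used over and over. First, if $R$ is a path of $G$ whose two (distinct) endvertices lie in $V(F)$ and which is internally disjoint from $F$, then $F\cup R\subseteq G$ has minimum degree at least~$2$: the internal vertices of $R$ have degree~$2$ and the endvertices of $R$ only gain degree (and none of their incident edges lies in $E(F)$, since $R$ is internally disjoint from $F$ and, if $\len(R)=1$, its edge is not an edge of $F$ in all situations below). Second, in that situation $\ds(F\cup R)\ge\ds(F)+2>\ds(F)$, because each endvertex of $R$ either moves from degree~$2$ to degree~$3$ (and is then newly counted, with weight~$3$) or already lay in $U(F)$ (and its weight rises by~$1$), while the internal vertices of $R$ have degree~$2$ and contribute nothing. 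Hence, whenever we can exhibit such an $R$ for which \emph{every} cycle of $F\cup R$ is long, the graph $F\cup R$ is a frame with strictly larger $\ds$-value, contradicting the choice of $F$.

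For (i), the only part that uses connectedness of $G$, suppose $F$ is disconnected and let $F_1$ be one of its components. Since $G$ is connected there is an $F_1$--$(F-F_1)$-path $R$ in $G$ internally disjoint from $F$; its two endvertices lie in \emph{different} components of $F$. Any cycle of $F\cup R$ using an edge of $R$ would have to contain a path of $F$ between the two ends of $R$, which is impossible; as the remaining cycles of $F\cup R$ lie in $F$, all cycles of $F\cup R$ are long, and we reach the contradiction above. For (iii), let $Q=u\ldots v$ be an $F$-path; a path of length less than $\ell$ is short by definition, so we may assume $\len(Q)\ge\ell\ge 3$, in particular $Q$ has internal vertices. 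Since those internal vertices have degree~$2$ in $F\cup Q$, every cycle of $F\cup Q$ meeting $Q$ in an edge must contain all of $Q$ and hence equal $Q\cup P$ for some $u$--$v$-path $P\subseteq F$; then $\len(Q\cup P)\ge\len(Q)\ge\ell$, so it is long, and the other cycles lie in $F$ and are long as well. Once more $F\cup Q$ is a frame of larger $\ds$-value, a contradiction.

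For the existence half of (iv), let $Q=u\ldots v$ be an $F$-path; by (iii) it is short. By the same argument as in (iii), if every cycle of $F\cup Q$ were long then $F\cup Q$ would be a frame with $\ds(F\cup Q)>\ds(F)$, which is impossible; hence $F\cup Q$ contains a short cycle $C$. As the frame $F$ has no short cycle, $C$ uses an edge of $Q$, and then (whether or not $Q$ has internal vertices) $C=Q\cup P$ for some $u$--$v$-path $P\subseteq F$ with $\len(P)\le\len(C)<\ell$; this $P$ is the desired short path. For uniqueness we use the extra hypothesis that every long cycle of $G$ has length at least $2\ell$: if $P_1\neq P_2$ were short $u$--$v$-paths in $F$, then the symmetric difference of their edge sets would be a nonempty subgraph of $F$ with all degrees even, hence would contain a cycle $C'\subseteq P_1\cup P_2\subseteq F$; being a cycle of the frame $F$ it is long, so $\len(C')\ge 2\ell$, contradicting $\len(C')\le\len(P_1)+\len(P_2)<2\ell$. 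Thus $P_1=P_2$.

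Part (ii) is where the real content lies, and I expect it to be the main obstacle, since it is the only part not forced by maximality and it needs an external ingredient. I would prove the contrapositive: if $G$ has no $k$ edge-disjoint long cycles, then $\ds(F)<84k\log k$. As $F$ is a frame, all its cycles are long, so $F$ contains no $k$ edge-disjoint cycles; since cycles have the edge-\EP\ (see~\cite[Exercise~9.5]{Die10}) there is an edge set $Y\subseteq E(F)$ with $|Y|=\bigO(k\log k)$ such that $F-Y$ is a forest, so that $|E(F)|-|Y|=|E(F-Y)|\le|V(F)|-1$. On the other hand, since $\delta(F)\ge 2$ every vertex outside $U(F)$ has degree exactly~$2$, whence $\ds(F)=2|E(F)|-2|V(F)|+2|U(F)|$, and $|U(F)|\le\tfrac13\ds(F)$ because the vertices of $U(F)$ have degree at least~$3$; combining these gives $\ds(F)\le 6\bigl(|E(F)|-|V(F)|\bigr)\le 6(|Y|-1)=\bigO(k\log k)$. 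The only real work is then to pin down the constant in the edge-\EP\ bound for cycles so that the stated $84k\log k$ comes out; one could also avoid the citation and argue directly, by repeatedly extracting a shortest cycle from the multigraph obtained from $F$ by suppressing its degree-$2$ vertices and restoring the $3$-core, but keeping the constants under control is the delicate point either way.
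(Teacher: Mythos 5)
Your arguments for (i), (iii) and (iv) are in substance the paper's: (i) connects $F$ via an $F$-path and uses maximality of $\ds(F)$; (iii) adds an allegedly long $F$-path and observes that the new graph is again a frame; (iv) gets existence of the short path in $F$ from the same exchange (the paper phrases it as a contradiction to (iii)) and gets uniqueness by noting that two short $u$--$v$-paths in $F$ would force a cycle of length $<2\ell$ inside $F$. Your write-up is a little more careful than the paper's, which on (iii) just asserts that all cycles of $F\cup Q$ remain long; your observation that a cycle through an edge of $Q$ must contain all of $Q$ (degree~$2$ at internal vertices) and hence has length $\ge\len(Q)$ is exactly the missing step.

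Part (ii) is where you deviate, and it is also where the one genuine gap sits. The paper suppresses the degree-$2$ vertices of $F$ to obtain a multigraph $H$ with $|E(H)|=\tfrac12\ds(F)\ge 42k\log k$ and $\delta(H)\ge 3$, and then invokes its own Lemma~\ref{lem:cyclesMultigraph}: any multigraph with minimum degree $\ge 3$ and at least $42k\log k$ edges contains $k$ edge-disjoint cycles (proved by iteratively removing a girth-short cycle via Lemma~\ref{lem:girth} and cleaning up the low-degree vertices, with a careful edge count). Your route --- citing the edge-\EP\ of cycles to get a feedback edge set $Y$ with $F-Y$ a forest, then using the degree identity $\ds(F)=2|E(F)|-2|V(F)|+2|U(F)|$ and $|U(F)|\le\tfrac13\ds(F)$ to conclude $\ds(F)\le 6(|Y|-1)$ --- is a clean reduction, but it only gives $\ds(F)=\bigO(k\log k)$ unless you have a feedback-edge bound of at most about $14k\log k$, and the reference you cite does not supply that constant. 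You flag this yourself and note that one could instead argue directly by extracting shortest cycles from the $3$-core; that direct argument is precisely Lemma~\ref{lem:cyclesMultigraph}, and it is where the real quantitative work of part (ii) lies, so leaving it as ``the delicate point'' means the stated bound $84k\log k$ is not actually established by your proof.
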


\noindent We need some preparation before we can prove the lemma.

\begin{lemma}[Erd\H os and P\'osa {\cite{EP62}}]\label{lem:girth}
Let $G$ be a multigraph on $n$ vertices with $\delta(G)\geq 3$.
Then $G$ contains a cycle of length at most $\max\{2\log n,1\}$.
\end{lemma}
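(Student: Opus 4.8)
The plan is to run a breadth-first search from an arbitrary vertex and to exploit that a small girth forces the BFS ball to grow geometrically, exactly as in the classical Moore-bound argument. As a start, observe that $\delta(G)\ge 3\ge 2$ implies that every component of $G$ contains a cycle, so the girth $g$ of $G$ is a finite integer. I would first dispose of the cases $g\le 2$: if $n=1$, then the lone vertex has a loop (it has degree at least~$3$), so $g=1\le\max\{2\log n,1\}$; and if $n\ge 2$, then $\max\{2\log n,1\}\ge 2\ge g$ whenever $g\le 2$. Henceforth I may assume $g\ge 3$, which means in particular that $G$ is loopless and has no two parallel edges.

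Now fix a vertex $v$, put $L_i=\{u\in V(G):\dist_G(u,v)=i\}$, and set $r=\lfloor(g-1)/2\rfloor\ge 1$. The first step is to show that the levels $L_0,\dots,L_r$ are \emph{tree-like}: for every $i$ with $0\le i\le r-1$, each $w\in L_{i+1}$ has exactly one neighbour in $L_0\cup\dots\cup L_i$ (which then lies in $L_i$ and is joined to $w$ by a single edge), and no $u\in L_i$ has a neighbour inside $L_i$. Indeed, a second neighbour of $w$ in $L_0\cup\dots\cup L_i$, or a neighbour of $u$ in $L_i$, together with two $v$-rooted shortest paths closes a genuine cycle of length at most $2i+2\le 2r\le g-1$, contradicting the definition of girth; checking that the constructed object really is a cycle (the two shortest paths meet only at their point of divergence, and the extra vertex lies on neither) is routine.

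The second step is to count. Since $v$ has three distinct neighbours, $|L_1|\ge 3$. For $1\le i\le r-1$, tree-likeness says that each $u\in L_i$ sends at least two edges down to $L_{i+1}$, while each $w\in L_{i+1}$ receives exactly one; counting the edges between $L_i$ and $L_{i+1}$ in the two ways gives $|L_{i+1}|\ge 2|L_i|$. Hence $|L_i|\ge 3\cdot 2^{i-1}$ for $1\le i\le r$, and therefore
\[
n\ \ge\ \sum_{i=0}^{r}|L_i|\ \ge\ 1+\sum_{i=1}^{r}3\cdot 2^{i-1}\ =\ 3\cdot 2^{r}-2\ \ge\ 2^{r+1},
\]
the last step using $r\ge 1$. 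Since $r+1=\lfloor(g-1)/2\rfloor+1\ge g/2$ for both parities of $g$, we get $\log n\ge r+1\ge g/2$, that is $g\le 2\log n\le\max\{2\log n,1\}$.

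I expect the only real difficulty to be the bookkeeping: one must pin down precisely the range of levels $i$ on which the girth threshold $2i+2<g$ is available, so that ``every vertex of $L_{i+1}$ has a unique parent'' and ``every vertex of $L_i$ has at least two children'' hold on the same range; and one must handle the multigraph artefacts---loops and parallel edges---at the bottom two levels, which is precisely what the degenerate term $\max\{\,\cdot\,,1\}$ in the statement absorbs for the tiny cases $n=1$ and $g\le 2$.
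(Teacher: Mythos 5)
Your proof is correct. The paper does not prove this lemma itself---it only cites Erd\H{o}s and P\'osa---and your BFS/Moore-bound expansion argument is the standard proof of exactly this statement: the reduction to girth $g\ge 3$ (so that loops and parallel edges disappear), the tree-likeness of the levels $L_0,\dots,L_r$ for $r=\lfloor (g-1)/2\rfloor$, the double count giving $|L_{i+1}|\ge 2|L_i|$, the resulting bound $n\ge 3\cdot 2^r-2\ge 2^{r+1}$, and the absorption of the degenerate cases $n=1$ and $g\le 2$ by the term $\max\{2\log n,1\}$ all check out.
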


\begin{lemma} \label{lem:cyclesMultigraph}
Let $k\in \N$ and $G$ be a multigraph with  $|E(G)|\ge 42k\log k$ and $\delta(G)\ge 3$.
Then $G$ contains $k$ edge-disjoint cycles.
\end{lemma}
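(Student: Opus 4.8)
The plan is to extract the cycles one at a time, always pulling out a short cycle via Lemma~\ref{lem:girth} and then tidying the graph so that the next cycle is again short, while monitoring the cyclomatic number $\mu(H):=|E(H)|-|V(H)|+c(H)$, where $c(H)$ is the number of components of $H$. First I would dispose of $k=1$, where nothing is to be shown: $42\log 1=0$, and $\delta(G)\ge 3$ already forces a cycle; so assume $k\ge 2$, hence $\log k\ge 1$. Now set $H_0:=G$ and iterate the following step while the current multigraph $H_i$ is nonempty. Since $\delta(H_i)\ge 3$ throughout, Lemma~\ref{lem:girth} gives a cycle $C_i\subseteq H_i$ with $\ell(C_i)\le\max\{2\log|V(H_i)|,1\}\le 2\log|V(G)|$. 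Delete $E(C_i)$ from $H_i$, then repeatedly delete vertices of degree at most $1$ and suppress vertices of degree exactly $2$ (replacing such a vertex by an edge between its two neighbours); whenever a component collapses to a single cycle, add that cycle to the collection and delete it. The resulting $H_{i+1}$ has $\delta(H_{i+1})\ge 3$ or is empty, and $|E(H_{i+1})|<|E(H_i)|$, so the process terminates, necessarily with $H_i=\es$.

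Every edge of $H_i$ represents a path of $G$ avoiding $E(C_0)\cup\dots\cup E(C_{i-1})$, so the cycles $C_0,C_1,\dots$ together with the collapsed-component cycles pull back to pairwise edge-disjoint cycles of $G$; write $P$ for their number, so it suffices to prove $P\ge k$. This is where $\mu$ helps: deleting a degree-$\le1$ vertex or suppressing a degree-$2$ vertex leaves $\mu$ unchanged, deleting a collapsed cycle-component lowers $\mu$ by exactly $1$ while producing one more cycle, and deleting $E(C_i)$ lowers $\mu$ by $\ell(C_i)$ minus the number of newly created components, hence by at most $2\log|V(G)|$. Thus $\mu$ drops by at most $2\log|V(G)|$ per cycle produced, so $\mu(G)\le 2P\log|V(G)|$. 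As $\delta(G)\ge 3$ forces $|V(G)|\le\tfrac23|E(G)|$, we have $\mu(G)\ge|E(G)|-|V(G)|\ge\tfrac13|E(G)|\ge 14k\log k$, and hence $P\ge 7k\log k/\log|V(G)|$.

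To conclude I would split into two cases. If $|V(G)|\le k^7$ then $P\ge 7k\log k/\log(k^7)=k$. If $|V(G)|>k^7$, then $|E(G)|\ge\tfrac32|V(G)|$ and the same estimate gives $P\ge\mu(G)/(2\log|V(G)|)\ge|V(G)|/(4\log|V(G)|)$; since $x\mapsto x/(4\log x)$ is increasing for $x\ge 3$ and equals $k^7/(28\log k)\ge k$ at $x=k^7$ (using $k\ge 2$), again $P\ge k$.

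The step I expect to need the most care is the tidying-up: one has to check that peeling degree-$\le1$ vertices and suppressing degree-$2$ vertices really restores $\delta\ge 3$ — watching out for loops and for a component that shrinks to a single cycle — and that the pulled-back cycles are genuinely edge-disjoint; the remaining arithmetic is routine, with the constant $42$ forced by $\tfrac13\cdot 42=14>2\cdot7$ in the final estimate. By contrast, the cruder greedy that restores $\delta\ge 3$ by passing to the $3$-core after each removal fails here: in a near-cubic graph a single removal can trigger a cascade that destroys almost all edges, whereas passing to the $2$-core and suppressing keeps $\mu$ intact.
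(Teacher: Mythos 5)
Your proof is correct, and it takes a genuinely different route from the paper's.

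The paper argues by induction on $k$: it extracts one short cycle $C$ via Lemma~\ref{lem:girth}, restores minimum degree~$3$ by deleting degree-$\le1$ vertices and suppressing degree-$2$ vertices, and then does a careful book-keeping to show $|E(H)|\ge|E(G)|-3\ell(C)$, which (after a chain of inequalities using $m\ge 42k\log k\ge 9$ and the monotonicity of $x\mapsto x-6\log(\tfrac23 x)$) yields $|E(H)|>42(k-1)\log(k-1)$ so that induction applies. You instead run the same peel-and-tidy loop non-inductively and track the cyclomatic number $\mu=|E|-|V|+c$ as a potential. The key structural gain is that $\mu$ is \emph{exactly} invariant under deleting a degree-$\le1$ vertex and under suppressing a degree-$2$ vertex, so you never have to estimate how many edges the tidying destroys; you only pay $\ell(C_i)\le 2\log|V(G)|$ per extracted girth-cycle and $1$ per collapsed unicyclic component. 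Combined with $\mu(G)\ge|E(G)|-|V(G)|\ge\tfrac13|E(G)|\ge 14k\log k$ (from $\delta(G)\ge 3$), this gives $\mu(G)\le 2P\log|V(G)|$, and your two-case finish (against $|V(G)|\lessgtr k^7$) recovers $P\ge k$. The accounting checks out: removing the edges of $C_i$ drops $\mu$ by $\ell(C_i)$ minus the number of new components, hence by at most $\ell(C_i)$; unicyclic components contribute $\mu=1$ and one cycle each; and the pulled-back cycles are edge-disjoint because each edge of $H_i$ represents a path in $G$ disjoint from everything previously extracted, and these representative paths have pairwise disjoint interiors (suppressed vertices leave the graph once and for all). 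In short, your approach trades the paper's explicit loss-bound during tidying for an exact invariant plus a case split in the final arithmetic; both give the same constant~$42$, but yours makes it clearer where that constant comes from ($\tfrac13\cdot 42 = 14 > 2\cdot 7$).

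The one place to tighten is the degenerate case $|V(G)|=1$: there $2\log|V(G)|=0$, so the bound $\ell(C_i)\le 2\log|V(G)|$ fails. As in the paper, you should dispose of this separately (a single vertex with $\delta\ge 3$ supports at least two loops, and $|E(G)|\ge 42k\log k\ge k$ loops give $k$ edge-disjoint cycles at once). With that caveat noted, the argument is complete.
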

\begin{proof}
We proceed by induction on $k$.
For $k=1$ the statement holds, since every multigraph with $\delta(G)\ge 3$ contains a cycle.

Let $k\geq 2$. 
We may assume that $n\new{=|V(G)|}\geq 2$, as otherwise the statement is trivial.
Let $C$ be a shortest cycle in $G$.
Let $n_1$ and $n_2$ be the number of vertices of degree 1 and 2 in $G_0=G - E(C)$, respectively.
Thus $n_1+n_2\le \len(C)$.
As long as $G_t$ contains a vertex of degree $1$ or $2$,
let $G_{t+1}$ arise from $G_{t}$ by either deleting a vertex of degree~1
or suppressing a vertex of degree~$2$.
Let $s$ be the maximal integer for which $G_s$ is defined.
We claim that one of the following statements hold\new{s}
for the transformation from $G_t$ to $G_{t+1}$.
\begin{enumerate}[(i)]
\item The number of vertices of degree $1$ does not increase and the number of vertices of degree $2$ decreases.
\item The number of vertices of degree $1$ decreases and the number of vertices of degree $2$ increases by at most $1$.
\end{enumerate}
To see that our claim is true, 
suppose we deleted a vertex $u$ of degree $1$ and let $v$ be the neighbour of $u$.
If $d_{G_t}(v)=2$, then (i) holds and otherwise (ii) holds.
If we suppress a vertex of degree $2$, then (i) holds.

It is easy to see that (ii) holds at most $n_1$ times.
Hence (i) holds at most $n_1+n_2$ times.
Observe that $|E(G_{t})|=|E(G_{t+1})|-1$.
Therefore,
$|E(G_s)|\geq |E(G)|-\len(C)-2n_1-n_2 \geq |E(G)|-3\len(C)$.

Let $H$ arise from $G_s$ by deleting isolated vertices.
Thus
\begin{equation}\label{edgesH}
|E(H)| \geq |E(G)|-3\len(C).
\end{equation}
By construction, $H$ does not contain vertices of degree~$1$ or~$2$; 
thus, $\delta(H)\ge 3$ holds or $H$ is empty.
We claim that 
$|E(H)| > 42 (k-1)\log(k-1)\geq 0$. If true, $H$ contains in particular an edge,
which implies that $\delta(H)\ge 3$. Moreover, we can apply induction to $H$
to find $k-1$ edge-disjoint cycles in $H$. Since $G-E(C)$ contains a subdivision of $H$, 
we therefore obtain together with $C$ in total $k$ edge-disjoint cycles in $G$. 

It remains to prove that $|E(H)| > 42 (k-1)\log(k-1)$.
We write $m=|E(G)|$ and by $\delta(G)\ge 3$ we have $|V(G)|\le \frac{2m}{3}$.
As $C$ was chosen as the shortest cycle in $G$, Lemma \ref{lem:girth} implies
\begin{equation}\label{shortcyc}
\len(C)\le 2\log\left(\frac{2m}{3} \right).
\end{equation}
Note that the function $x\mapsto x-6\log\left(\frac 2 3 x\right)$ is increasing for $x\ge 9$.
Since $k\ge 2$, we conclude $\log(28\log k) \le 6\log k$. 
Together with $m\ge 42k\log k \ge 9$,  we deduce from~\eqref{edgesH} and~\eqref{shortcyc} that
\begin{align*}
|E(H)| &\ge m - 6\log\left(\frac 2 3 m\right) \\
&\ge 42k\log k - 6\log\left(28 k\log k\right) \\
&\ge 42k\log k - 6 \log k - 6\log(28\log k) \\
&\ge 42k\log k - 6\log k - 36\log k  \\
&> 42(k-1)\log(k-1).
\end{align*}
This finishes the proof.
\end{proof}

\begin{proof}[Proof of Lemma~\ref{framelem}]
For (i), suppose that $F$ has two components $A$ and $B$.
As $G$ is connected, 
there is an $A$--$B$-path $P$ in $G$ that is internally disjoint from $F$.
Thus, $F\cup P$ is a frame,
as $F\cup P$ contains the same cycles as $F$.
Since
$\ds(F\cup P)> \ds(F)$, we obtain a contradiction to the choice of $F$.

For (ii), denote by $H$ the multigraph obtained from $F$ by suppressing 
all vertices of degree~$2$. Observe that $|E(H)|=\tfrac{1}{2}\ds(F)\geq 42k\log k$ and $\delta(H)\ge 3$.
Thus, by Lemma \ref{lem:cyclesMultigraph}, $H$ and then also $F$ contain $k$ edge-disjoint cycles. Since all cycles
in $F$ are long, the assertion is proved.

For (iii), suppose there is a long $F$-path $Q$.
Then it can be added to $F$,
since in $F\cup Q$ all cycles are still long.
However, $\ds(F\cup Q)>\ds(F)$, which is a contradiction.

For (iv): 
As $F$ is connected by (i), the distance of $u$ and $v$ in $F$ is finite.
If $\dist_F(u,v)\geq \ell$, then any cycle 
in $F\cup Q$ containing $Q$ is long, which again contradicts~(iii) and proves the first part of (iv).
If there were two short $u$--$v$-paths $P_1,P_2$ in $F$, their union $P_1\cup P_2\subseteq F$ would contain a cycle of length less than $2\ell$
which is short by assumption. This is impossible as $F$ only contains long cycles.
\end{proof}

\subsection{Edge-connectivity}\label{sepsec}

The aim of this subsection is to prove Lemma~\ref{lem:gatessep2}, which 
helps defining a hitting set in Section~\ref{hittingset}.
We need Lemmas~\ref{lem:equi} and~\ref{lem:gatessep} only 
for the proof of Lemma~\ref{lem:gatessep2}.

Let $G$ be a multigraph and $k\in \N$.
For two vertices $u,v\in V(G)$,
we define $u\sim_k v$
if either $u=v$ or if there are $k$ edge-disjoint $u$--$v$-paths in $G$.
The transitivity of $\sim_k$ follows from Menger's theorem and thus $\sim_k$ is an equivalence relation.

\begin{lemma}\label{lem:equi}
Let $G$ be a multigraph and let $A,B$ be nonempty subsets of distinct 
equivalence classes of  $\sim_k$.
Then there is a set $X$ of at most $k-1$ edges separating~$A$ and~$B$.
\end{lemma}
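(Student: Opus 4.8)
The plan is to reduce this to a standard global edge-connectivity statement via Menger's theorem. Pick vertices $a\in A$ and $b\in B$. Since $A$ and $B$ lie in distinct equivalence classes of $\sim_k$, we have $a\not\sim_k b$, which by the very definition of $\sim_k$ means there do \emph{not} exist $k$ edge-disjoint $a$--$b$-paths in $G$. By Menger's theorem (edge version), the maximum number of edge-disjoint $a$--$b$-paths equals the minimum size of an $a$--$b$-edge-cut, so there is an edge set $X$ with $|X|\le k-1$ such that $a$ and $b$ lie in different components of $G-X$.

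The remaining work is to upgrade this single-pair separator into one that separates \emph{all} of $A$ from \emph{all} of $B$. Here is where the structure of the equivalence classes is used. Let $C_a$ be the component of $G-X$ containing $a$. I claim $A\subseteq V(C_a)$ and $B\cap V(C_a)=\emptyset$. Indeed, take any $a'\in A$; since $a,a'$ lie in the same $\sim_k$-class, either $a'=a$ or there are $k$ edge-disjoint $a$--$a'$-paths in $G$. As $|X|\le k-1$, deleting $X$ cannot destroy all of these paths, so some $a$--$a'$-path survives in $G-X$; hence $a'\in V(C_a)$. The same argument applied to $b$ and an arbitrary $b'\in B$ shows every vertex of $B$ lies in the component of $G-X$ containing $b$, which is a component different from $C_a$ (as $b\notin V(C_a)$). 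Therefore no vertex of $A$ is in the same component of $G-X$ as any vertex of $B$, i.e. $X$ separates $A$ and $B$.

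The only mildly delicate point — and the one I would state carefully rather than gloss over — is the interaction between the definition of $\sim_k$ and the ``$k$ edge-disjoint paths vs.\ cut of size $k-1$'' dichotomy: one must check that ``not $a\sim_k b$'' with $a\ne b$ really does yield fewer than $k$ edge-disjoint $a$--$b$-paths (immediate from the definition, since for $a\ne b$ the relation $a\sim_k b$ is \emph{equivalent} to the existence of $k$ such paths), and then that a cut of size $\le k-1$ cannot separate two vertices joined by $k$ edge-disjoint paths (again immediate, by a counting/pigeonhole argument: each path uses at least one cut edge and the paths are edge-disjoint). Neither step is hard; the proof is essentially an exercise in applying Menger's theorem together with the robustness of ``being connected by many edge-disjoint paths'' under deletion of few edges. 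I would write it in three short movements: (1) produce $X$ separating a single chosen pair $a,b$ via Menger; (2) show $A$ stays inside one component of $G-X$ and $B$ inside another; (3) conclude $X$ separates $A$ and $B$, with $|X|\le k-1$.
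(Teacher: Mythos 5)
Your proof is correct and follows essentially the same line as the paper's: choose representatives $a\in A$, $b\in B$, apply Menger to obtain a cut $X$ of size at most $k-1$, and then use that a cut of size $k-1$ cannot disconnect two vertices joined by $k$ edge-disjoint paths to pull the entirety of $A$ and $B$ into two distinct components of $G-X$. The paper phrases the last step as a contradiction (``$a,a',b',b$ would all lie in one component''), whereas you argue directly that $A\subseteq V(C_a)$ and $B\subseteq V(C_b)$ with $C_a\neq C_b$; this is only a presentational difference.
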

\begin{proof}
Pick $a\in A$ and $b\in B$, and observe that $a\not\sim_k b$. 
Thus there is an edge set $X$ of size at most $k-1$ that separates~$a$ and~$b$ in~$G$.
Suppose that $X$ fails to separate $A$ from $B$ in $G$. Then there 
are $a'\in A$ and $b'\in B$
such that $G-X$ still contains an $a'$--$b'$-path.
Since $X$ is too small to separate $a$ from $a'$, and $b$ from $b'$,
we see that 
the vertices $a,a',b',b$ belong to the same component in $G-X$, which is a contradiction.
\end{proof}

\begin{lemma}\label{lem:gatessep}
Let $k,p\in \N$, and
let  $A_1,\ldots,A_p$
be subsets of~$p$ distinct equivalence classes of~$\sim_k$
in a multigraph~$G$.
Then there is an edge set $X\subseteq E(G)$ of size at most~$(p-1)(k-1)$
such that for all distinct $i,j\in \{1, \ldots, p\}$, the multigraph
$G-X$ does not contain any $A_i$--$A_j$-path.
\end{lemma}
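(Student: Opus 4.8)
The plan is to induct on $p$, using Lemma~\ref{lem:equi} as the base case and as the separation tool at each step. For $p=1$ there is nothing to do, and for $p=2$ the claim is exactly Lemma~\ref{lem:equi} with a set of at most $k-1\le (p-1)(k-1)$ edges. So assume $p\ge 3$ and that the statement holds for $p-1$.

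First I would peel off one class, say $A_p$. Since $A_1$ and $A_p$ lie in distinct equivalence classes of $\sim_k$, Lemma~\ref{lem:equi} gives an edge set $X_p$ of size at most $k-1$ that separates $A_1$ from $A_p$ in $G$. The issue is that $X_p$ only guarantees the absence of $A_1$--$A_p$-paths, not of $A_i$--$A_p$-paths for $i\ge 2$. To fix this, I would look at the component structure of $G-X_p$: let $K$ be the union of the components of $G-X_p$ that contain a vertex of $A_p$. By the choice of $X_p$, no vertex of $A_1$ lies in $K$; in particular $A_p\subseteq K$ and $A_1\cap K=\emptyset$. Now restrict attention to the multigraph $G':=G-X_p$ (or to its subgraph $K$ together with the other $A_i$'s treated appropriately); any $A_i$--$A_p$-path in $G-X_p$ lies entirely inside $K$, so it suffices to separate the remaining $A_i\cap K$ from $A_p$ inside $K$.

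The cleanest way to organize the induction is therefore: apply the inductive hypothesis to the $p-1$ classes $A_1,\dots,A_{p-1}$ (note these are still subsets of distinct $\sim_k$-classes of $G$) to get an edge set $X'$ of size at most $(p-2)(k-1)$ with no $A_i$--$A_j$-path in $G-X'$ for distinct $i,j\le p-1$, and then add $X_p$ as above. I would set $X:=X'\cup X_p$, of size at most $(p-2)(k-1)+(k-1)=(p-1)(k-1)$, and check that $G-X$ has no $A_i$--$A_j$-path for any distinct $i,j$: if both indices are $\le p-1$ this follows from $X'\subseteq X$; if $j=p$, then such a path would also be a path in $G-X_p$ avoiding $X'$, hence would reach $A_p$ from some $A_i$ with $i\le p-1$, contradicting the choice of $X_p$ once we observe $X_p$ separates $A_i$ from $A_p$ — and here is the one subtlety: $X_p$ was only built to separate $A_1$ from $A_p$.

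So the main obstacle, and the point needing care, is exactly that last implication: an arbitrary $A_i$--$A_p$-path in $G-X_p$ need not be blocked. The resolution is the component argument sketched above: since $A_1$ and $A_i$ lie in the \emph{same} $\sim_k$-class only if $i=1$ — which is false for $i\ge 2$ — I cannot directly argue they are on the same side of $X_p$. Instead I would choose $X_p$ more carefully, or rather choose which class to peel more carefully: reorder so that we separate, at stage $p$, the class $A_p$ from \emph{all} of $A_1\cup\cdots\cup A_{p-1}$ at once. Concretely, I would prove the stronger local statement via Lemma~\ref{lem:equi} applied with $A:=A_p$ and $B:=A_1$; the set $X_p$ it returns, by the proof of Lemma~\ref{lem:equi} (which shows $a,a',b',b$ all lie in one component of $G-X$), in fact has the property that $A_p$ and $A_1$ lie in different components of $G-X_p$, but more importantly any vertex $\sim_k$-equivalent to a vertex of $A_1$ stays on $A_1$'s side — and since $A_i$ for $i\ge 2$ is in yet another class, I instead observe that it does not matter which side $A_i$ falls on, because what I really need is only that $A_p$ is cut off from $A_1,\dots,A_{p-1}$ simultaneously. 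The honest fix is to iterate: build $X$ as the union over $i=1,\dots,p-1$ of a $(k-1)$-edge set $X_i$ separating $A_i$ from $A_p$, giving $(p-1)(k-1)$ edges that isolate $A_p$, then recurse on $A_1,\dots,A_{p-1}$ — but that overcounts. The efficient argument, which I expect the authors use, is the component/contraction trick: after removing a single $(k-1)$-set separating $A_1$ from $A_p$, contract the side containing $A_p$ (it is disjoint from $A_1$), note the images of $A_2,\dots,A_{p-1}$ are still in distinct $\sim_k$-classes of the smaller graph since contraction only increases connectivity, and recurse; each of the $p-1$ recursion steps spends at most $k-1$ edges, for a total of $(p-1)(k-1)$. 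Getting the bookkeeping of this contraction/recursion exactly right is the part I would spend the most effort on.
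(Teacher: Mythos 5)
Your final pitch---remove a $(k{-}1)$-cut separating $A_1$ from $A_p$, contract the $A_p$-side, recurse on $A_1,\dots,A_{p-1}$---is not the paper's argument and, as stated, does not work. The stumbling block you noticed early on (a cut between $A_1$ and $A_p$ says nothing about $A_i$--$A_p$-paths for $2\le i\le p-1$) is real, and the contraction trick does not repair it: contracting a vertex set to a single vertex can only \emph{merge} equivalence classes of $\sim_k$, never keep them apart, because contraction cannot decrease edge-connectivity between surviving vertices. So your justification ``the images of $A_2,\dots,A_{p-1}$ are still in distinct $\sim_k$-classes since contraction only increases connectivity'' runs exactly backwards. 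Worse, if some $A_i$ ($2\le i\le p-1$) lies entirely inside the contracted side, its image is absorbed into the new vertex together with $A_p$, and the inductive hypothesis cannot even be set up. Finally, the peel-one-class recursion you are aiming at, $f(p)\le (k-1)+f(p-1)$, relies on the single $(k{-}1)$-cut isolating $A_p$ from \emph{all} of $A_1,\dots,A_{p-1}$, and Lemma~\ref{lem:equi} only gives a cut against one other class, not against a union of several.

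The paper's argument is genuinely different in two respects. First, it contracts \emph{every} $A_i$ to a single vertex $a_i$ (not one side of a cut); this is safe because the only thing one needs to preserve is the fact that each pair $a_i,a_j$ can be separated by at most $k-1$ edges, which follows directly from Lemma~\ref{lem:equi} applied in $G$, without any claim about $\sim_k$ in the contracted graph. Second, and crucially, instead of a linear peel-off it uses a divide-and-conquer along the components of $G'-X'$, where $X'$ is a $(k{-}1)$-cut between $a_1$ and $a_2$: if $s\ge 2$ components carry the surviving vertices $a_1,\dots,a_p$ with multiplicities $p_1,\dots,p_s$, then induction on each component gives $\sum_i (p_i-1)(k-1)=(p-s)(k-1)\le (p-2)(k-1)$, plus $k-1$ for $X'$. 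The fact that at least two components arise is exactly what lets the bookkeeping close, and the induction is legitimate because passing to a \emph{subgraph} (a component) can only shrink $\sim_k$-classes, so the $a_i$'s remain pairwise separable by $\le k-1$ edges there. That subgraph-vs.-contraction distinction is what your sketch misses.
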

\begin{proof}
Let $G'$ arise from $G$ by identifying for every $i\in\{1, \ldots, p\}$
 all vertices in $A_i$ to a single vertex $a_i$.
Any edge set of $G$ separates two distinct sets $A_i,A_j$  in $G$
if and only if, seen as an edge set of $G'$, it separates $a_i$ from $a_j$ in $G'$.
Since, by Lemma~\ref{lem:equi}, any two distinct sets $A_i,A_j$
can be separated by at most $k-1$ edges in $G$, also any two distinct $a_i,a_j$ 
can be separated by at most $k-1$ edges in $G'$.

We proceed by induction on $p$. If $p=1$, then $X=\emptyset$ will do. Thus, we may
assume that $p\geq 2$. By Lemma~\ref{lem:equi}, there is a set $X'$ of at most $k-1$
edges such that $a_1$ is separated from $a_2$ in $G'-X'$. 
Let $C_1,\ldots, C_s$ be the set of components of $G'-X'$ that 
contain at least one vertex of $a_1,\ldots, a_p$. Let $p_i$ be the number 
of vertices $a_1,\ldots, a_p$ in $C_i$. 
Then $p_i\geq 1$ but also $p_i<p$ for $i=1,\ldots,s$. Applying induction, we
obtain for each $i$ an edge set $X_i$ of size at most $(p_i-1)(k-1)$ 
such that no component of $C_i-X_i$ contains two or more of $a_1,\ldots, a_p$. 
Then $X=X'\cup\bigcup_{i=1}^sX_i$ separates every two vertices of $a_1,\ldots,a_p$
in $G'-X$, and thus also any two sets $A_i,A_j$ in $G-X$.

\new{We conclude that}
\begin{align*}
|X|&=|X'|+\sum_{i=1}^s|X_i| \leq k-1+\sum_{i=1}^s(p_i-1)(k-1)\\
&\leq k-1+(p-s)(k-1) \leq k-1+(p-2)(k-1)=(p-1)(k-1),
\end{align*}
since $s\geq 2$ by choice of $X'$.
\end{proof}

Let $A$ be a vertex set in a multigraph $G$, and let $k$ be a positive integer.
An edge set $X$  \emph{$k$-perfectly separates} $A$
if for every  $a,a'\in A$ with $a\not\sim_k a'$ in $G-X$, the vertices $a,a'$ 
lie in different components of $G-X$. 
This means, that two vertices either are not \new{in the same component} or there are at least $k$ edge-disjoint paths between them.

\begin{lemma}\label{lem:gatessep2}
Let $k\in \N$, and let $A$ be a vertex set in a multigraph~$G$.
Then there is a set $X\subseteq E(G)$ of size at most $(|A|-1)(k-1)$ that $k$-perfectly separates $A$.
\end{lemma}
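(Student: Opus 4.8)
The plan is to bootstrap Lemma~\ref{lem:gatessep2} from Lemma~\ref{lem:gatessep} by an iterative refinement procedure. The obstruction to applying Lemma~\ref{lem:gatessep} directly is that the vertices of $A$ need not lie in distinct equivalence classes of $\sim_k$; several of them may be $k$-edge-connected to one another, and those pairs we are explicitly allowed to leave together. So first I would partition $A$ according to $\sim_k$ in $G$, say into classes meeting $A$ in the nonempty sets $A_1,\dots,A_p$, pick one representative $a_i\in A_i$, and apply Lemma~\ref{lem:gatessep} to the family $\{a_1,\dots,a_p\}$ to obtain an edge set $X_0$ of size at most $(p-1)(k-1)$ separating the $A_i$ pairwise in $G-X_0$. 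The point of separating representatives rather than the whole sets $A_i$ is a counting one, explained below.

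After removing $X_0$, every component of $G-X_0$ contains at most one of the sets $A_i$, hence its intersection with $A$ lies inside a single $\sim_k$-class of $G$. But $\sim_k$ can only shrink when we delete edges, so within such a component two vertices of $A$ that are no longer $k$-edge-connected must be handled recursively. I would therefore recurse on each component $C$ of $G-X_0$ with the set $A\cap V(C)$, and argue termination by the following measure: when we split off representatives of the $\sim_k$-classes inside $C$, the representative $a_i$ itself is already $k$-edge-connected to all of $A_i\cap V(C)$ in $C$ (this is where having chosen $A_i$ as a full $\sim_k$-class of the \emph{original} graph pays off is subtle — one must be careful here), so in fact after the first round every recursive call deals with an $A$-set that is a proper subset, and the recursion depth is bounded.

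The cleanest way to run the bookkeeping is probably a single induction on $|A|$, mirroring the structure of the proof of Lemma~\ref{lem:gatessep}. If $|A|\le 1$ take $X=\emptyset$. Otherwise let $A_1,\dots,A_p$ be the nonempty intersections of $A$ with the $\sim_k$-classes of $G$; if $p=1$ then $A$ already lies in one class and I claim $X=\emptyset$ works, because $k$-edge-connectivity is monotone under edge deletion only in the wrong direction — wait, here the subtlety is real: two vertices $k$-connected in $G$ need not be $k$-connected in $G-X$, so $p=1$ does \emph{not} immediately finish. Instead, when $p=1$ one should still recurse, but now after choosing \emph{two} vertices $a\ne a'\in A$ with $a\not\sim_k a'$ in $G-X$ for whatever the current $X$ is... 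This circularity is exactly why the right move is: always apply Lemma~\ref{lem:gatessep} to one representative per $\sim_k$-class of the \emph{current} graph, get $X_0$ of size $(p-1)(k-1)$, pass to the components $C_1,\dots,C_s$ of $G-X_0$, and recurse with $A\cap V(C_j)$ in $C_j$; then bound $|X|\le (p-1)(k-1)+\sum_j(|A\cap V(C_j)|-1)(k-1)$ and check this telescopes to $(|A|-1)(k-1)$ using $\sum_j|A\cap V(C_j)|=|A|$ and $s\ge p$ (each $A_i$ lands in its own component).

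The main obstacle is precisely this termination/accounting issue: one must ensure the recursion strictly decreases something and that the arithmetic closes. The safe route is to induct on $|A|$ and arrange that in the recursive calls the sets $A\cap V(C_j)$ are proper subsets of $A$; this holds whenever $p\ge 2$ (since then no $C_j$ contains all of $A$), and the case $p=1$, i.e.\ all of $A$ in one $\sim_k$-class of $G$, can be reduced to $p\ge 2$ by noting that if additionally every pair in $A$ is $k$-edge-connected there is nothing to do, while if some pair fails to be $k$-connected then — contradiction with $p=1$ — so $p=1$ forces $X=\emptyset$ to be valid. Hence the whole proof is: induct on $|A|$; if $p=1$ return $\emptyset$; if $p\ge 2$ apply Lemma~\ref{lem:gatessep} to representatives, recurse into components with strictly smaller $A$-sets, and verify $|X|\le(|A|-1)(k-1)$ by the same telescoping estimate as in Lemma~\ref{lem:gatessep}, using that the representatives guarantee $s\ge p\ge 2$.
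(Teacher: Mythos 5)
Your final argument is correct and is essentially the paper's proof: induct on $|A|$, return $X=\emptyset$ when $p=1$ (vacuously correct, since then no pair in $A$ fails to be $k$-connected in $G$), otherwise apply Lemma~\ref{lem:gatessep}, recurse into the pieces of $G$ minus that separator, and close the size bound by telescoping with $s\ge p$. The paper differs only cosmetically: it applies Lemma~\ref{lem:gatessep} to the full intersections $A_1,\dots,A_p$ rather than to one representative per class, so that no component of $G-X'$ can contain vertices of two different $A_i$, and it then groups components by class into graphs $G_i$ and recurses on $A_i$ in $G_i$; you recurse component-by-component, which works just as well because the arithmetic only needs $s\ge p$ and each $A\cap V(C_j)$ proper in $A$. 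One intermediate claim of yours is false as stated: separating the representatives $a_1,\dots,a_p$ does \emph{not} automatically separate the sets $A_i$ pairwise (some $a'\in A_i\setminus\{a_i\}$ may land in the same component as $a_j$, since $|X_0|$ can exceed $k-1$) — but your finished formulation never uses that claim, only $s\ge p$, so no harm is done.
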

\begin{proof}
We use induction on $|A|$.
Let $A_1,\ldots,A_p$ be a partition of $A$ induced by the 
equivalence classes of $\sim_k$.
If $p=1$, the statement trivially holds as $X=\es$
$k$-perfectly separates $A$.
In particular, this covers the case $|A|=1$.

Therefore, we may assume that $p\geq 2$.
We apply Lemma~\ref{lem:gatessep} to obtain a set $X'\subseteq E(G)$ of size at most $(p-1)(k-1)$ 
that separates $A_i$ from $A_j$ for all distinct $i,j\new{\in\{1,\ldots,p\}}$.
Denote for every $i\in \{1, \ldots, p\}$ by 
$G_i$  the union of components in $G-X'$ that contain a vertex in $A_i$, 
and observe that the $G_i$ are pairwise disjoint by choice of $X'$.
By induction,
there is a set $X_i\subseteq E(G_i)$ of size at most $(|A_i|-1)(k-1)$ that $k$-perfectly separates $A_i\cap V(G_i)$
 in $G_i$.
Thus, $X=X'\cup X_1 \cup \ldots \cup X_p$ $k$-perfectly separates~$A$.
Observe that
\begin{align*}
|X|\leq (p-1)(k-1) + (|A|-p)(k-1) = (|A|-1)(k-1),
\end{align*}
which completes the proof.
\end{proof}

\section{Proof of the main theorem}\label{sec:mainproof}

We start with a brief proof sketch. The key trick is to force a gap 
between short and long cycles: by induction, we can ensure that 
there are no intermediate cycles, cycles of length between~$\ell$ and~$10\ell$.
This forces a lot of structure. 
Repeatedly, we will argue that this or that property is satisfied
because otherwise we would find an intermediate cycle. 
\new{We found this trick in the article of Birmel\'e et al.~\cite{BBR07}.}

Throughout we  fix a frame $F$ \new{such that $\ds(F)$ is maximal.}
As every long cycle that is not contained in the frame contains at least one $F$-path,
it is necessary to find structure in the $F$-paths.
To this end, we group $F$-paths to \emph{hubs}.
The hubs together with parts of the frame $F$ form the \emph{hub closures},
which essentially partition the edge set of $G$. 
Informally, the hub closures are the largest
$2$-connected pieces that may contain cycles without also containing a cycle of~$F$.
 
From the absence of intermediate cycles we will deduce 
that no hub closure contains a long cycle. 
That means that every long cycle in some sense follows along a cycle in $F$
(without actually being contained in $F$). 
In particular, it traverses at least two (in fact, at least three) distinct hub closures \new{or it uses a path between two branch vertices of $F$}.
To define a candidate hitting 
set we therefore disconnect  hub closures when this is possible with 
few edges and when this cuts a connection between branch vertices of $F$.
The resulting edge set is either a true hitting set, 
or we will be able to piece together $k$ edge-disjoint long cycles
that all traverse well-connected hub closures in the same way.

\new{
We start with the proof of Theorem~\ref{thm:EPlongcycles}. It will take up 
the rest of this article. }

\begin{proof}[Proof of Theorem~\ref{thm:EPlongcycles}]
We define 
\[
f(k,\ell) = 
210k^2\log k + 10\ell (k-1).
\]
We prove by induction on $k$ that 
\begin{equation}\label{inductionclaim}
\begin{minipage}[c]{0.8\textwidth}\em
if a graph $G$ does not contain $k$ edge-disjoint long cycles,
then it contains an edge set $X$ of size at most $f(k,\ell)$ 
that meets every long cycle.  
\end{minipage}\ignorespacesafterend 
\end{equation} 

Clearly,~\eqref{inductionclaim} is true 
 for $k=1$ as either $G$ contains a long cycle or $X=\emptyset$ meets all long cycles in $G$.
We therefore assume that 
\begin{align} \label{noKcycles}
	\emtext{$k\geq 2$ and
that $G$ does not contain $k$ edge-disjoint long cycles.}
\end{align}

Suppose $G$ contains a long cycle $C$ of length at most $10\ell$.
As $G-E(C)$ contains at most $k-2$ edge-disjoint long cycles, 
by induction there is a hitting set $X'\subseteq E(G)\setminus E(C)$ for $G-E(C)$
of size at most $210(k-1)^2\log (k-1) + 10\ell (k-2)$. 
Observe that $X=E(C) \cup X'$ is a hitting set of $G$ such that
\begin{align*}
|X|&=|X'|+|E(C)|\le 210(k-1)^2\log (k-1) + 10\ell(k-2) + 10\ell \\
 &\le 210k^2\log k + 10\ell (k-1) =f(k,\ell).
\end{align*}
Thus, 
we may assume that 
\begin{equation}\label{mediumcyc}
\emtext{
every long cycle of $G$ has length more than $10\ell$.
}
\end{equation}

We may also assume that every edge of $G$ lies in a long cycle. 
Otherwise, if $e\in E(G)$ is not contained in any long cycle,
then every hitting set of $G-e$ is also a hitting set of $G$.

Suppose, $G$ is not $2$-connected; that is, $G$ contains several blocks.
Note that every cycle lies in exactly one block.
Since every edge belongs to at least one long cycle,
every block contains a long cycle.
Let $B$ be a block of $G$
and let $k'$ be the maximal integer
such that $B$ contains $k'$ edge-disjoint long cycles.
Hence $0<k'<k-1$, as \new{$G-E(B)$} contains at least one long cycle that is edge-disjoint from every cycle in $B$.
Observe that \new{$G-E(B)$} contains at most $k-k'-1<k\new{-1}$ edge-disjoint long cycles.
We apply our induction hypothesis to $B$ and \new{$G-E(B)$}
and obtain a hitting set $X_1\subseteq E(B)$ in $B$ of size at most $210(k'+1)^2\log(k'+1) + 10\ell k'\leq 210(k'+1)^2\log k + 10\ell k'$
and a hitting set $X_2\subseteq E(G)\setminus E(B)$ of size at most $210(k-k')^2\log k+10\ell (k-k'-1)$.
Trivially $X=X_1\cup X_2$ is a hitting set in $G$ such that
\begin{align*}
|X| &\leq210(k'+1)^2\log k  + 10\ell k'
+ 210(k-k')^2\log k+10\ell(k-k'-1) \\
&\le 210\log k \left(k'^2+2k' +1 + k^2-2kk'+k'^2\right) + 10\ell(k-1)\\
&= 210\log k \left(2k'(k'+1-k) + 1 +k^2\right) + 10\ell(k-1)\\
&\le 210 k^2 \log k + 10\ell(k-1) = f(k,\ell)
\end{align*}
as $2k'(k'+1-k)+1\le 0$ holds because of $k'<k-1$.
Thus, we can assume that 
\begin{align}
	\emtext{
		$G$ is $2$-connected.
	}\label{twoConn}
\end{align}

\medskip

We now choose a frame $F$ of $G$ such that \new{$\ds(F)$ is maximal} (and we may assume that $G$ contains at least one long cycle, which implies that a frame in $G$ exists), which 
we let be fixed throughout the whole proof. 
As $F$ only contains long cycles, \eqref{mediumcyc} implies that  
\begin{align}
	\emtext{
		the girth of $F$ is \new{larger} than $10\ell$.
	}\label{girthF}
\end{align}
Next, we investigate $G-F$ and how the components of $G-F$ attach to $F$.

\subsection{Bridges of the frame}
In  light of \eqref{noKcycles} and \eqref{mediumcyc}, Lemma~\ref{framelem} now states:
\begin{equation}\label{Fprops}
\begin{minipage}{0.8\textwidth}
\em 
$F$ is connected; $\ds(F)<84k\log k$; every $F$-path $Q=u\ldots v$ is short 
and $F$ contains a unique short $u$--$v$-path $P$.
\end{minipage}
\end{equation}

For any $F$-path $Q=u\ldots v$,
we call the unique short $u$--$v$-path in $F$ its \emph{shadow} and denote it by $S_Q$.

An $F$-\emph{bridge} of $G$ or simply a \emph{bridge} is either an edge in $E(G)\sm E(F)$ with its two endvertices in $V(F)$,
or a component $K$ of $G-F$ together with all its neighbours $N$ in $F$ and all edges of $G$ joining $K$ and $N$.
Equivalently, a bridge is the union of all $F$-paths
that form a component in the graph on the set of all $F$-paths
where two $F$-paths are adjacent if they share an internal vertex.
For an $F$-bridge $B$ of $G$,
we call the vertices in $B\cap F$ the \emph{feet} of $B$ (in \new{$F$}).
The \emph{shadow} $S_B$ of $B$ is the union of the shadows of all $F$-paths contained in~$B$.

\begin{claim}\label{claim:bridgeShadowTreeDiameter}
For every bridge $B$,
the shadow $S_B$ is a tree of diameter less than~$\ell$.
\end{claim}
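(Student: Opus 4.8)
The plan is to show two things about $S_B$: that it is connected, that it contains no cycle, and that its diameter is bounded. Connectivity is the easiest part. Recall that $B$ is built from $F$-paths $Q$ that form a connected structure under the ``share an internal vertex'' relation, and each $Q = u\ldots v$ contributes its shadow $S_Q$, a short $u$--$v$-path in $F$. If $Q$ and $Q'$ share an internal vertex, then their feet lie in a common component of $B-F$ (indeed in $B$ minus its feet), but more usefully: I want to argue that consecutive shadows $S_Q$ and $S_{Q'}$ intersect. The cleanest route is to first reduce to the case where two $F$-paths $Q=u\ldots v$, $Q'=u'\ldots v'$ share an internal vertex $x$; then $uQxQ'v'$ and $uQxQ'u'$ etc.\ give $F$-paths, and by concatenating we can form an $F$-path whose two endpoints are, say, $u$ and $v'$; since by~\eqref{Fprops} every $F$-path has a unique short shadow and the graph $P\cup Q$ considerations from Lemma~\ref{claim:cycleInExtensionIsShort}/Lemma~\ref{extintlem} force shortness under~\eqref{mediumcyc}, the shadow of this combined path must be short, and uniqueness of shadows in $F$ then forces $S_Q$ and $S_{Q'}$ to overlap (otherwise $S_Q\cup S_{Q'}$ together with a detour through $x$ would build a short path between $u$ and $v'$ that is not the shadow, or two distinct short $u$--$v'$-paths in $F$). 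Since shadows of adjacent $F$-paths overlap and the $F$-paths of $B$ are connected under adjacency, $S_B$ is connected.

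Next, $S_B$ is acyclic: $S_B\subseteq F$, so any cycle in $S_B$ is a cycle of $F$, hence long by~\eqref{girthF} it has length larger than $10\ell$. But I will bound $\diam(S_B)<\ell$, and a connected graph of diameter less than $\ell$ has no cycle of length $2\ell$ or more, let alone length $>10\ell$ --- so once the diameter bound is in hand, acyclicity is automatic, and $S_B$ is a tree. Thus the whole statement reduces to the diameter bound.

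For the diameter bound, suppose for contradiction that $S_B$ contains two vertices $a,b$ at distance at least $\ell$ in $S_B$. Both $a$ and $b$ lie on shadows $S_{Q_a}$, $S_{Q_b}$ of $F$-paths in $B$, and since $B$ is connected through its $F$-paths there is a sequence of $F$-paths linking $Q_a$ to $Q_b$ with consecutive ones sharing an internal vertex. I would use this to build, inside $B$, an $F$-path (or a path through $B$ connecting two feet) $Q$ with endpoints $u,v$ such that $u$ lies near $a$ on $S_{Q_a}$ and $v$ near $b$ on $S_{Q_b}$ --- concretely, splice together the $F$-paths along shared internal vertices, using Lemma~\ref{singlejumplem} to keep the pieces under control, to get a single short $F$-path $Q=u\ldots v$. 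Now consider the cycle $Q\cup S_Q$ where $S_Q$ is the shadow of $Q$: by~\eqref{Fprops} this is a short cycle, so $\dist_F(u,v)\le \ell(S_Q) < \ell$. On the other hand $u$ is within the (short) shadow $S_{Q_a}$ of $a$ and $v$ within $S_{Q_b}$ of $b$, and all these shadows lie in $S_B\subseteq F$; chasing distances, $\dist_{S_B}(a,b)\le \dist_{S_B}(a,u)+\dist_F(u,v)+\dist_{S_B}(v,b)$, and each term on the right is less than $\ell$ --- but actually I need the sum to be less than $\ell$, so I must be more careful: the point is that one shows directly that any two feet of $B$ are at distance less than $\ell$ in $F$ (this is where the cycle $Q\cup S_Q$ being short is used, since $S_Q$ is an $F$-path between those feet's end... ) and then every vertex of $S_B$ is within $\ell/2$-ish of a foot, and one tunes the constants. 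The honest version: I expect the proof to first establish that \emph{any two feet of $B$ are joined in $B$ by an $F$-path whose shadow is a short path} (using Lemmas~\ref{extintlem} and~\ref{singlejumplem} applied to the chain of $F$-paths through $B$, with the hypothesis~\eqref{mediumcyc} giving the ``every long cycle has length at least $2\ell$'' --- in fact $\ge 3\ell$ --- needed by those lemmas), so that the shadow $S_B$ is covered by these short shadow-paths all pairwise meeting, and then a short argument shows a union of short paths that pairwise intersect and hang off a common ``core'' has diameter less than $\ell$; in fact, since $S_Q$ is an $F$-path meaning it is internally disjoint from... one gets that $S_B$ is contained in the union of a single short path and short ``ears'', forcing $\diam(S_B)<\ell$. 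The main obstacle is precisely this last step: turning ``the $F$-paths of $B$ form a connected overlapping family, each with a short shadow'' into the uniform diameter bound $\ell$ on $S_B$ --- getting the constant exactly $\ell$ (rather than $2\ell$ or $3\ell$) will require using the uniqueness of shadows and the intermediate-cycle-freeness~\eqref{mediumcyc} quite sharply, via Lemma~\ref{singlejumplem}, to argue that the shadows of a chain of overlapping $F$-paths all fit inside the shadow of a single $F$-path spanning the chain's extreme feet.
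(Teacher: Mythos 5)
The proposal has a genuine circularity and also rests on a false intermediate claim, so it does not go through.

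First, the assertion ``a connected graph of diameter less than $\ell$ has no cycle of length $2\ell$ or more'' is simply wrong: a wheel, or a complete graph, has diameter at most~$2$ yet contains arbitrarily long cycles. You cannot derive acyclicity of $S_B$ from the diameter bound alone. (One could try to salvage this using $S_B\subseteq F$ and~\eqref{girthF}, but the proposal does not attempt that, and it is not immediate.)

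Second, and more fundamentally, the plan is circular. You propose to reduce everything to showing $\diam(S_B)<\ell$, and to obtain treeness as a corollary; but the sharp bound $\diam(S_B)<\ell$ is precisely what is hard to get \emph{without} already knowing $S_B$ is a tree. Without treeness, the natural argument you sketch --- every vertex of $S_B$ lies on the shadow of some $F$-path and hence is within distance $<\ell$ of a foot, and any two feet are within distance $<\ell$ of each other because $B$ contains an $F$-path between them and its shadow is short by~\eqref{Fprops} --- only yields $\diam(S_B)<3\ell$, which is not enough. Your own write-up concedes this (``The main obstacle is precisely this last step''). The actual proof in the paper proceeds in the opposite order: it first proves $S_B$ is acyclic directly --- assuming a cycle $C\subseteq S_B\subseteq F$, which is long and hence of length $\geq 10\ell$ by~\eqref{mediumcyc}, one picks a $2\ell$-subpath $R$ of $C$ with endvertices $r_1,r_2$, uses that each $r_i$ lies on a short shadow $P_i$ whose feet $x_i,y_i$ are joined by a short shadow-path $S$ in $S_B$, and derives a contradiction from $P_1\cup P_2\cup R\cup S$ having at most $5\ell$ edges yet $F$ having girth $>10\ell$ --- and only then deduces the diameter bound from the trivial tree fact that the diameter is attained between leaves, together with the observations that every leaf of $S_B$ is a foot and any two feet are at distance $<\ell$ in $S_B$. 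Finally, a small further remark: your connectivity argument (shadows of adjacent $F$-paths overlap) is more involved than needed and the overlap is not obvious; the paper simply notes that $B$ contains an $F$-path between any two of its feet, so its short shadow connects them inside $S_B$, and every non-foot vertex of $S_B$ lies on such a shadow.
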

\begin{proof}
As $B$ is connected, it contains an $x$--$y$-path $Q$ between any two of its feet~$x,y$. 
The shadow of this $F$-path $Q$ connects $x$ and $y$ in $S_B$. 
As all vertices in $S_B$ that are no feet
lie in the shadow of an $F$-path between two feet, we conclude that $S_B$ is connected.

Suppose that $S_B$ contains a cycle $C$. 
Since $C$ is contained in $F$, it follows that $C$ is a long cycle, which, 
in turn, implies $\len(C)\geq 10\ell$, by~\eqref{mediumcyc}.
Pick two vertices $r_1,r_2$ in $C$ at distance precisely~$2\ell$ in $C$, and let $R$ 
be the subpath of $C$ of length $2\ell$ between $r_1$ and $r_2$.

Why is $r_i$ in $S_B$? Because there is an  $F$-path $Q_i\subseteq B$ 
whose shadow $P_i$ contains~$r_i$. Denote by $x_i,y_i$ the endvertices of $P_i$, 
and observe that $P_i$ is a short path, by Lemma~\ref{framelem}~\eqref{uniqueShadow}.
By the same statement, there exists also a short $x_1$--$x_2$-path $S$ in the shadow of $B$. 

Since $P_1\cup P_2\cup R\cup S \subseteq S_B$ has at most $5\ell$ edges it cannot contain a long cycle, 
and because it is a subset of $F$ it cannot contain a short cycle. 
\new{Thus, $x_iP_ir_i$ or $y_iP_ir_i$ is internally disjoint from $R$ for $i=1,2$
(we may assume the first one).}
In particular, 
this means that $S=x_1P_1r_1Rr_2P_2x_2$, and thus that $R\subseteq S$.
This, however, is impossible since $S$ has length at most~$\ell$ but $R$ 
has length~$2\ell$.  

We deduce that $S_B$ is a tree.
By the definition of a shadow, every leaf of $S_B$ is a foot. 
As any two feet of $B$ are connected by an $F$-path, their distance in $S_B$ is short by Lemma~\ref{framelem}~\eqref{uniqueShadow}.
Thus, the diameter of $S_H$ is less than $\ell$.
\end{proof}

\subsection{Hubs}

We define a graph $\cG$ 
on the set of all bridges of $G$, where 
two bridges $B_1,B_2$ are adjacent if their shadows share a common edge.
A \emph{hub} is the union of all bridges in a 
component of $\cG$. Thus, a hub is a subgraph of $G$ consisting of all bridges
that form a component in $\cG$. We say that a bridge $B$ \emph{belongs} to a hub $H$
if $B\subseteq H$, that is, if $B$ is part of the component in $\cG$ that defines $H$.
For a hub $H$,
the \emph{shadow} $S_H$ of $H$ is the union of the shadows of all bridges in $H$.
\new{See Figure~\ref{vxhubfig} for an illustration.}

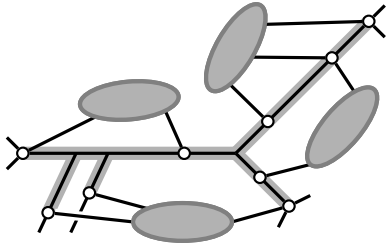
\begin{figure}[htb]
\centering
\begin{tikzpicture}[label distance=-0.5mm, scale=0.7]

\tikzstyle{bridge}=[shape=ellipse, outer sep=0mm, minimum height=5mm, minimum width=13mm,draw, ultra thick, color=dunkelgrau, fill=hellgrau]
\tikzstyle{shadow}=[hellgrau,line width=5pt]

\node[smallvx] (a) at (0,0){};
\coordinate (b) at (4,0);
\node[smallvx] (c) at (6.5,2.5){};
\node[smallvx] (d) at (5,-1){};
\coordinate (e) at (0.3,-1.5);

\coordinate (x) at (1,0);
\coordinate (y) at (1.6,0);
\coordinate (ey) at (0.9,-1.5);

\draw[hedge] (a) -- (b) node[smallvx,near end] (ab1){};
\draw[hedge] (b) -- (c) node[smallvx,near start] (bc1){} node[smallvx,near end] (bc2){};
\draw[hedge] (b) -- (d) node[smallvx,midway] (bd1){};
\draw[hedge] (x) -- (e) node[smallvx,near end] (e1){};
\draw[hedge] (y) -- (ey) node[smallvx,midway] (ey1){};

\draw[hedge] (a) -- ++(-0.3,-0.3);
\draw[hedge] (a) -- ++(-0.3,0.3);
\draw[hedge] (c) -- ++(0.3,-0.3);
\draw[hedge] (c) -- ++(0.3,0.3);
\draw[hedge] (d) -- ++(-0.2,-0.4);
\draw[hedge] (d) -- ++(0.4,0.2);

\node[bridge, rotate=60] (B1) at (4,2){};
\draw[hedge] (B1.south west) to (bc1);
\draw[hedge] (B1.south east) to (c);
\draw[hedge] (B1.south) to (bc2);
\node[bridge, rotate=60] at (4,2){};

\node[bridge, rotate=50] (B2) at (6,0.5){};
\draw[hedge] (B2.north east) to (bc2);
\draw[hedge] (B2.west) to (bd1);
\node[bridge, rotate=50] at (6,0.5){};

\node[bridge, rotate=0] (B3) at (3,-1.3){};
\draw[hedge] (B3.east) to (d);
\draw[hedge, double distance=1.2pt, white, double=black] (B3.west) to (e1);
\draw[hedge] (B3.north west) to (ey1);
\node[bridge, rotate=0] at (3,-1.3){};

\node[bridge, rotate=5] (B4) at (2,1){};
\draw[hedge] (B4.south east) to (ab1);
\draw[hedge] (B4.south west) to (a);
\node[bridge, rotate=5] at (2,1){};

\begin{scope}[on background layer]
\draw[shadow] (a.center) -- (b.center);
\draw[shadow] (b.center) -- (c.center);
\draw[shadow] (b.center) -- (d.center);
\draw[shadow] (x) to (e1.center);
\draw[shadow] (y) to (ey1.center);
\end{scope}

\end{tikzpicture}
\caption{A hub consisting of four bridges, and its shadow (in grey).}\label{vxhubfig}
\end{figure}

\new{
Before proceeding, we quickly note for later reference two basic properties of hub shadows 
that follow directly from the definition together with Claim~\ref{claim:bridgeShadowTreeDiameter}.
\begin{claim}\label{claim:shadowsEdgeDisjointAndConnected}
	The shadow  of any hub is connected and 
	the shadows of two distinct hubs are edge-disjoint.
\end{claim}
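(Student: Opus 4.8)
The plan is to prove the two assertions of Claim~\ref{claim:shadowsEdgeDisjointAndConnected} separately, both by unwinding the relevant definitions.

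\textbf{Connectedness.} Let $H$ be a hub, so that $H$ is the union of the bridges $B_1,\ldots,B_m$ forming a component of $\cG$. By Claim~\ref{claim:bridgeShadowTreeDiameter}, each shadow $S_{B_i}$ is a (connected) tree. Since $B_1,\ldots,B_m$ induce a connected subgraph of $\cG$, it suffices to observe that whenever $B_i$ and $B_j$ are adjacent in $\cG$, their shadows $S_{B_i}$ and $S_{B_j}$ share a common edge, hence a common vertex, so $S_{B_i}\cup S_{B_j}$ is connected. A straightforward induction along a spanning tree of the component of $\cG$ containing $B_1,\ldots,B_m$ then shows that $S_H=\bigcup_{i=1}^m S_{B_i}$ is connected.

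\textbf{Edge-disjointness.} Suppose $H$ and $H'$ are distinct hubs whose shadows share an edge $e\in E(F)$. By definition of a hub shadow, there is a bridge $B\subseteq H$ whose shadow $S_B$ contains $e$, and likewise a bridge $B'\subseteq H'$ with $e\in E(S_{B'})$. Then $B$ and $B'$ are adjacent in $\cG$ (their shadows share the edge $e$), so $B$ and $B'$ lie in the same component of $\cG$; but $H$ and $H'$ are defined by different components of $\cG$, so $B\subseteq H$ and $B'\subseteq H'$ forces $H=H'$, a contradiction. Hence the shadows of distinct hubs are edge-disjoint.

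I do not expect any genuine obstacle here: both parts are immediate once the definitions of $\cG$, hubs, and hub shadows are spelled out, and the only nontrivial input—that each individual bridge shadow is a connected tree—has already been established in Claim~\ref{claim:bridgeShadowTreeDiameter}. The one point to be slightly careful about is that a shadow is defined as a union of \emph{edge} sets of paths in $F$, so two shadows sharing an edge indeed puts the corresponding bridges in the same component of $\cG$; this is exactly how adjacency in $\cG$ was defined, so there is nothing to check beyond quoting it.
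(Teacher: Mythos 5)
Your proof is correct and matches the paper's approach: the paper simply asserts that the claim follows directly from the definition of a hub (as a union of bridges forming a component of $\cG$, where adjacency in $\cG$ means shared shadow edges) together with Claim~\ref{claim:bridgeShadowTreeDiameter}, and your argument spells out exactly that. One tiny point you glide over in the edge-disjointness part: if $B=B'$ they are not literally \emph{adjacent} in $\cG$, but of course they still lie in the same component, so the conclusion $H=H'$ holds regardless.
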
 }

We will write $\clos{H}$ for $H\cup S_H$ and call it the \emph{closure of $H$}.
\new{Hubs, their shadows and their closures constitute the key structure that captures 
how, in terms of long cycles, the rest of the graph attaches to the frame. 
In particular, when we will define the hitting set, in Section~\ref{hittingset}, 
we will exploit two features of hubs:
\begin{itemize}
\item hub closures do not contain long cycles (Claim~\ref{claim:noLongCycleHub}); and
\item every cycle that does not traverse only edges of a single hub closure is long (Claim~\ref{claim:cycleSeveralHubsLong}).
\end{itemize}
Except for basic properties (such as that every shadow of a hub is connected), 
these two are the only results about hubs that we need in the final part of the proof. 
The sole purpose of this subsection and the next is to prove Claims~\ref{claim:noLongCycleHub} 
and~\ref{claim:cycleSeveralHubsLong}. 
One way, therefore, to read this article would
be: jump directly to Section~\ref{hittingset} in order to see what role the claims play
in the finale of the proof and only afterwards come back here for the proofs of the two claims. 
}

\new{We start the path towards our first aim, Claim~\ref{claim:noLongCycleHub}, 
with a simple observation.}

\begin{claim}\label{claim:block}
For every hub $H$, the closure $\clos{H}$ is 2-connected.
\end{claim}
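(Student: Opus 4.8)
The plan is to show that $\clos{H}=H\cup S_H$ is $2$-connected by exhibiting it as a union of $2$-connected subgraphs that overlap pairwise in at least one edge, and then invoking the standard fact that such a union is $2$-connected. Recall that $H$ is by definition the union of all bridges in one component of the graph $\cG$, where two bridges are adjacent precisely when their shadows share an edge. So I would first argue that for each bridge $B$ belonging to $H$, the graph $B\cup S_B$ is $2$-connected; then I would traverse a spanning tree of the relevant component of $\cG$, adding the pieces $B_i\cup S_{B_i}$ one at a time, and observe that each newly added piece shares a shadow edge with a previously added one.

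First I would handle a single bridge: I claim $B\cup S_B$ is $2$-connected. A bridge $B$ is either a single chord of $F$ (in which case $B\cup S_B$ is a cycle, using that $S_B$ is a path between the two feet by Claim~\ref{claim:bridgeShadowTreeDiameter}, hence $2$-connected) or it is built from a component $K$ of $G-F$ together with its feet. In the latter case $B$ is the edge-disjoint union of $F$-paths forming a connected structure, and each $F$-path $Q=u\ldots v$ together with its shadow $S_Q$ forms a cycle (here I use that $S_Q$ is the unique short $u$--$v$-path in $F$ from~\eqref{Fprops}, and that $Q$ is internally disjoint from $F$, so $Q\cup S_Q$ is genuinely a cycle). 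Any two $F$-paths in the same bridge are linked through a chain of $F$-paths that pairwise share internal vertices (by the characterisation of a bridge as a component of the $F$-path intersection graph); hence the corresponding cycles $Q\cup S_Q$ can be ordered so that consecutive ones meet — at least in a vertex. To upgrade "meet in a vertex" to "meet in an edge," I would note that if two of these cycles meet only in a single vertex $x$, that vertex is a cutvertex of their union only if... — this is the point that needs a little care; the cleanest route is to instead argue directly that $B\cup S_B$ has no cutvertex: removing a vertex $x$, any foot remains connected to any other foot through $F$-paths avoiding $x$ (rerouting within $B$, since $B$ minus an internal vertex of one $F$-path still connects the feet of that path via other $F$-paths or via $K-x$ being connected enough), and every non-foot vertex of $S_B$ sits on the shadow of some $F$-path whose endpoints are feet. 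I expect this single-bridge $2$-connectivity to be the main obstacle, because the internal structure of a bridge (a component of $G-F$ with its attachment feet) is only loosely constrained, and one must rule out cutvertices both inside $K$ and among the feet.

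Once $B\cup S_B$ is known to be $2$-connected for every bridge $B$ in $H$, the rest is routine. Let $B_1,\ldots,B_m$ be the bridges of $H$; since they form a connected component of $\cG$, I can order them so that each $B_i$ ($i\ge 2$) is $\cG$-adjacent to some $B_j$ with $j<i$, i.e. $S_{B_i}$ and $S_{B_j}$ share an edge $e_i$. Then $(B_j\cup S_{B_j})$ and $(B_i\cup S_{B_i})$ are both $2$-connected and share the edge $e_i$ (in particular at least two vertices), so their union with the earlier union is again $2$-connected by the standard lemma that the union of two $2$-connected graphs sharing at least one edge is $2$-connected. Inducting over $i$, the total union $\bigcup_i (B_i\cup S_{B_i}) = H\cup S_H = \clos{H}$ is $2$-connected, which is what we wanted. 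I would also record that $\clos H$ has at least one edge and is not a single vertex, so the conclusion is non-vacuous — this follows since $H$ contains at least one bridge, hence at least one $F$-path, hence an edge outside $F$ together with its shadow.
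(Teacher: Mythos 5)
Your high-level plan is exactly the paper's: reduce to showing that each $B\cup S_B$ is $2$-connected and then glue these pieces along the shared shadow edges. The gluing step is fine as you describe it. The gap is in the single-bridge step, which you yourself flag as "the main obstacle" and then handle by a handwave — "$K-x$ being connected enough" — without ever supplying a reason.

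The fact you are missing is that $G$ has already been reduced to the $2$-connected case (assumption~\eqref{twoConn}), and the paper's one-line proof leans entirely on this. It matters in two ways. First, without it a bridge $B$ could have a single foot $v$ (a cutvertex of $G$); then $S_B=\{v\}$ has no edges and $B\cup S_B=B$ need not be $2$-connected at all, so no purely combinatorial argument about $F$-paths and shadows can work. Second, once you remove a vertex $x$ of the component $K$ of $G-F$, you have no control over whether $K-x$ stays connected; what saves you is that every component of $K-x$ must retain a neighbour in $F$ (otherwise it would be a whole component of $G-x$, contradicting $G$ being $2$-connected), and every such neighbour is a foot lying in the connected tree $S_B$. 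Combined with the fact that every leaf of the tree $S_B$ is a foot (so each component of $S_B-x$ still reaches a foot, hence reaches $B$), this yields the absence of cutvertices. Your proposal never invokes~\eqref{twoConn}, so the step that does all the work is unjustified. Everything else — the cycle case for a chord bridge, the chain ordering via a spanning tree of the $\cG$-component, the union lemma — matches the paper's intent.
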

\begin{proof}
Since $G$ is $2$-connected, 
a bridge together with its shadow is $2$-connected, too.
The closure of a hub is the union of adjacent bridges together with their shadows. 
As adjacent bridges overlap on an edge, the union again is $2$-connected.
\end{proof}

For a hub $H$,
let $L_H$ be the graph with vertex set $E(S_H)$ and $e,f\in V(L_H)$ are adjacent in $L_H$ 
if $e,f$ share a common vertex in $G$ and there is a bridge $B$ which \new{belongs} to $H$ such that $e,f\in E(S_B)$.
Let $L_H^*$ arise from $L_H$ by adding all possible edges of the following type:
for all $e_1,\ldots,e_r\in V(L_H)$ sharing a common vertex in $G$
which induce a connected graph in $L_H$ add all edges $e_ie_j$ for distinct $i,j\in \{1,\ldots,r\}$.

\new{Where do the graphs $L_H$ and $L_{H^*}$ come from? The graph $L_H$ is a subgraph 
of the line graph on $S_H$ whose adjacencies encode how the shadows of the bridges of $H$ 
interact. The graph $L_{H^*}$ is obtained from $L_H$ by taking the transitive closure 
on each set of edges incident with the same vertex of $G$.}
\begin{claim}\label{LHconn}
For every hub $H$, the graph $L^*_H$ is connected. 
\end{claim}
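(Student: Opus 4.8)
\textbf{Proof plan for Claim~\ref{LHconn}.}

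The plan is to show that $L^*_H$ is connected by exploiting the connectedness of the shadow $S_H$ together with the structure of the graph $\cG$ that defines the hub. First I would recall that, by definition, $H$ is the union of bridges $B_1,\ldots,B_m$ that form a connected component of $\cG$, and that adjacency in $\cG$ means two bridges share an edge of their shadows. Since each individual bridge shadow $S_{B_i}$ is a (connected) tree by Claim~\ref{claim:bridgeShadowTreeDiameter}, its edge set induces a connected subgraph of $L_H$: any two edges of a tree are joined by a path of pairwise-consecutive edges, and consecutive edges of the tree share a vertex and both lie in $E(S_{B_i})$, hence are adjacent in $L_H$. So each $V(L_H)$-subset $E(S_{B_i})$ spans a connected piece of $L_H$, hence also of $L^*_H$.

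Next I would glue these pieces together. If $B_i$ and $B_j$ are adjacent in $\cG$, their shadows share an edge $e\in E(S_{B_i})\cap E(S_{B_j})$, so the connected pieces corresponding to $B_i$ and $B_j$ in $L_H$ meet at the vertex $e$; therefore their union is connected in $L_H$ (and in $L^*_H$). Walking along any path in the component of $\cG$ containing $B_1,\ldots,B_m$, we conclude that $\bigcup_i E(S_{B_i})$ — which is exactly $E(S_H)=V(L_H)$ — induces a connected subgraph already in $L_H$, and a fortiori in $L^*_H\supseteq L_H$. This gives connectivity of $L^*_H$ directly, and in fact the passage to $L^*_H$ is not even needed for this particular claim (it will matter elsewhere). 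The one point to be careful about is that $S_H$ has no isolated vertices as a subgraph of $G$ and that $V(L_H)=E(S_H)$ is nonempty whenever $H$ is a genuine hub, which holds because every bridge contains an $F$-path and hence has a nonempty shadow.

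The main obstacle, such as it is, is purely bookkeeping: making sure that "consecutive edges of a tree are adjacent in $L_H$" is applied correctly, i.e.\ that the two consecutive tree-edges indeed both belong to the \emph{same} bridge shadow $E(S_{B_i})$ (which they do, since we chose them inside one tree $S_{B_i}$), so that the defining condition of adjacency in $L_H$ is met — one really does need a single bridge witnessing the adjacency, not merely a shared vertex in $G$. Once this is in place, the argument is a routine "connected union of connected overlapping pieces" induction on the length of a path in $\cG$, and no estimates are required.
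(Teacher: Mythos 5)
Your proposal is correct and essentially identical to the paper's own argument: both first observe that $L_H[E(S_{B})]$ is connected for each bridge $B$ (consecutive edges of the tree $S_B$ share a vertex and lie in a common bridge shadow, hence are adjacent in $L_H$), then glue these pieces along shared shadow edges via a walk in the component of $\cG$ defining $H$, thereby showing $L_H$ itself is connected, which suffices since $L_H\subseteq L_H^*$.
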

\begin{proof} 
	We will prove that $L_H$ is connected which immediately proves the claim as $L_H\subseteq L^*_H$. 
	First, it is easy to see that for any bridge $B$ of $H$, the induced subgraph $L_H[E(S_B)]$
	on the edges of $S_B$ is connected.
	This holds as edges of $S_B$ with common endvertex in $G$
	are adjacent in $L_H$ as they belong to the shadow of the same bridge.
	The connectivity of $S_B$ \new{(Claim  \ref{claim:bridgeShadowTreeDiameter})} then implies the connectivity of $L_H[E(S_B)]$.
	
	Let $e,f\in V(L_H)$ be two edges of the hub $H$ that belong to the shadows of different bridges $B,B'$.
	The definition of hubs implies that there is a sequence of bridges $B=B_1, B_2, \ldots, B_r=B'$
	such that $S_{B_i}$ and $S_{B_{i+1}}$ share at least one edge. 
	As all $L_H[E(S_{B_i})]$ are connected in $L_H$, there is a path in $L_H$ joining $e$ and $f$.
\end{proof}


\new{The essence of the next claim is: provided some technical conditions are met, we can shortcut
any long path through a hub shadow to a short path by using parts of the hub. With the help of Claim~\ref{claim:nonAdjEdges}
we will then in Claim~\ref{claim:simpleVersion} get rid of the technical preconditions.  
}

\begin{claim}\label{claim:pathExtensionInShadow}
Let $H$ be a hub, and let $P$ be a path in $S_H$ 
such that every $P$-path in $F$ has length at least $3\ell$ 
and such that every pair of consecutive edges in $P$ is adjacent in $L_H^*$.
\new{Then there is a short path $\hat P$ between 
the endvertices of $P$ such that $\hat P\subseteq\clos H$
and  
such that $\dist_F(u,P)\le \ell$ holds  for every $u\in V(\hat P\cap F)$.}
\end{claim}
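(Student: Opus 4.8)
The plan is to build a short $u$--$v$-path (where $u,v$ are the endvertices of $P$) by applying the extension machinery of Section~\ref{extsec}, specifically Lemma~\ref{extintlem} together with the convenience Lemma~\ref{singlejumplem}. The idea is that each time two consecutive edges $e_{t}e_{t+1}$ of $P$ are adjacent in $L_H^*$ but {\em not} in $L_H$, the common vertex $w$ of $e_t,e_{t+1}$ in $G$ is incident with a connected-in-$L_H$ family of shadow edges, hence (tracing the bridges providing these adjacencies) there is a short cycle through $w$ whose only intersection with $P$ near $w$ is controlled. First I would, for every vertex $w$ along $P$, collect the bridges of $H$ whose shadows use an edge of $P$ incident to $w$; walking through the $L_H$-component at $w$ furnishes a chain of short cycles $C_1,\dots,C_r$ satisfying the hypotheses (i)--(iii) of Lemma~\ref{singlejumplem}: each $C_i$ meets $P$ in a subpath (by Claim~\ref{claim:bridgeShadowTreeDiameter} every bridge shadow is a tree of diameter $<\ell$, so $C_i\cap P$ is a subpath $u_iPv_i$), consecutive $C_i$ meet outside $P$ inside the relevant bridge, and the subpaths $u_iPv_i$ overlap because they all sit at $w$. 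Since by~\eqref{mediumcyc} every long cycle of $G$ has length $>10\ell\ge 3\ell$, Lemma~\ref{singlejumplem} yields a single short cycle $C$ with $C\cap P=u_1Pv_r$; the $P$-path $Q=C-u_1Pv_r$ is then a short $P$-path whose endvertices straddle $w$, and whose length is $<\ell$.

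Next I would assemble these short $P$-paths $Q_1,\dots,Q_m$ (one for each vertex of $P$ where a shortcut is needed, or really for a covering family whose straddled subpaths chain along $P$ from $u$ to $v$) into a tuple and verify the hypotheses of Lemma~\ref{extintlem}. Conditions (E2) (the cycle $u_iPv_i\cup Q_i$ is short) follows since $Q_i$ came from a short cycle; (E3) ($u_1=u$, $v_m=v$) is arranged by taking the first straddle to start at $u$ and the last to end at $v$; (E4) is the cyclic-order condition, arranged by ordering the $Q_i$ along $P$; and (E1$'$), internal disjointness of $Q_i,Q_j$ when $|i-j|>1$, I would get by choosing the covering family minimally so that non-consecutive straddled subpaths are disjoint. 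Then Lemma~\ref{extintlem}, again using that long cycles have length $\ge 2\ell$, produces a short $u$--$v$-path $\hat P$ contained in $P\cup\bigcup_j Q_j\subseteq\clos H$, as required. The hypothesis that every $P$-path in $F$ has length $\ge 3\ell$ is what guarantees that no shortcut we are forced to take is through $F$ alone: any $F$-path attached to $P$ is long, so it does not interfere, and all genuine shortcuts run through bridges of $H$.

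It remains to check the distance condition: $\dist_F(u',P)\le\ell$ for every $u'\in V(\hat P\cap F)$. Here I would use that every vertex of $\hat P$ lies on one of the short cycles $C_i$ constructed above (or on $P$ itself), and each $C_i$ lies in the closure of a single bridge $B_i$ of $H$. A vertex of $\hat P$ in $F$ therefore lies in $S_{B_i}$ for some $i$ — but by Claim~\ref{claim:bridgeShadowTreeDiameter} the shadow $S_{B_i}$ is a tree of diameter $<\ell$, and it meets $P$ (since $B_i$ was chosen because its shadow uses an edge of $P$), so every vertex of $S_{B_i}$ is within distance $<\ell$ of $P$ inside $S_{B_i}\subseteq F$. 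This gives $\dist_F(u',P)<\ell$.

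The main obstacle I anticipate is the bookkeeping that turns the purely local $L_H^*$-adjacency data at each vertex $w$ of $P$ into a globally consistent, minimally-chosen family $Q_1,\dots,Q_m$ satisfying (E1$'$)--(E4) simultaneously — in particular ensuring the straddled subpaths $u_iPv_i$ chain monotonically along $P$ and that non-consecutive ones are disjoint, so that Lemma~\ref{extintlem} applies cleanly. The construction of a single shortcut at one vertex via Lemma~\ref{singlejumplem} is routine once the short cycles $C_i$ are identified; stitching the shortcuts into an extension and controlling overlaps is where the care is needed.
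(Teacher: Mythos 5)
You have correctly identified the two key tools (Lemma~\ref{singlejumplem} to build each short $P$-path, Lemma~\ref{extintlem} to assemble them into a short $u$--$v$-path) and the final distance argument via Claim~\ref{claim:bridgeShadowTreeDiameter} is correct. But the route you sketch to producing the tuple $(Q_1,\dots,Q_m)$ is materially different from the paper's and, as sketched, has a real gap: you build one shortcut $Q_w$ \emph{per vertex} $w$ of $P$ by collecting the bridges whose shadow edges meet $w$, and you then hope the resulting straddled subpaths $u_iPv_i$ chain monotonically and satisfy (E1$'$), (E3), (E4). There is no reason a priori that they should. The straddled subpath of a cycle built at $w$ is determined by how far the bridge shadows involved stretch along $P$; two different vertices $w<w'$ could produce $Q_w$, $Q_{w'}$ with $u_{w}Pv_{w}\supsetneq u_{w'}Pv_{w'}$ (nesting rather than the (E4)-chaining), and non-consecutive shortcuts built through overlapping bridges could easily share internal vertices, violating (E1$'$). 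Also, (E3) requires $u_1=u$ and $v_m=v$, which a purely vertex-local construction need not supply if the first/last edges of $P$ belong to a single bridge and nothing forces a shortcut to start at $u$ or end at $v$. You flag this yourself as ``the main obstacle,'' but it is not merely bookkeeping: it is the substance of the argument.

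The paper avoids all of this by working globally rather than vertex-by-vertex. Because consecutive edges of $P$ are $L_H^*$-adjacent, the edge set $E(P)$ extends inside $L_H$ to a path using only edges of $F$ incident with $P$; this yields a \emph{sequence} of bridges $B_1,\dots,B_t$ (taken minimal in $t$) covering $P$, with consecutive shadows sharing an edge and the first/last shadows containing the first/last edge of $P$. Minimality then gives exactly the structural facts you are missing: the claim labelled~\eqref{Psubpath} (each $S_B\cap P$ is a subpath), \eqref{startandend} (only $S_1$ contains the first vertex, only $S_t$ the last), and \eqref{doubleshadow} (shadows of non-consecutive $B_i,B_j$ meet only in a single vertex of $P$). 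The paper then builds one short cycle $C_i$ per bridge $B_i$, groups consecutive $C_i$'s by whether they overlap in an edge of $P$, applies Lemma~\ref{singlejumplem} to each group to get the $Q_i$'s, and \emph{derives} (E1$'$) from \eqref{doubleshadow} (your condition on non-consecutive $Q_i$'s), (E3) from \eqref{startandend}, and (E4) from a short induction using \eqref{doubleshadow} again. If you want to salvage your proposal, replace the vertex-local construction with this minimal global bridge sequence; without it, (E3), (E4), and (E1$'$) remain unproved.
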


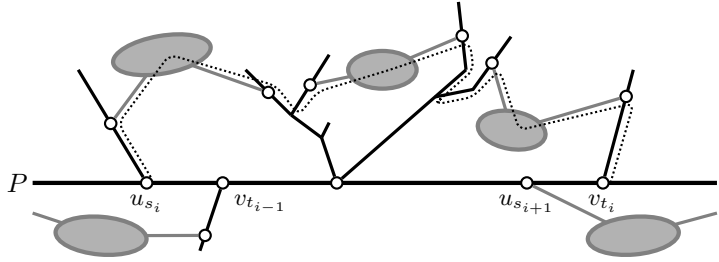
\begin{figure}[htb]
\centering
\begin{tikzpicture}[label distance=-0.5mm]

\tikzstyle{bridge}=[shape=ellipse, outer sep=0mm, minimum height=5mm, minimum width=13mm,draw, ultra thick, color=dunkelgrau, fill=hellgrau]

\draw[hedge,ultra thick] (0,0) -- (9,0);

\node at (-0.2,0) {$P$};

\node[smallvx,label=below:{\small $u_{s_i}$}] (a) at (1.5,0) {};

\draw[hedge] (a) -- ++ (-0.9,1.5) node[midway,smallvx] (a1){};

\node[smallvx,label=below right:{\small $v_{t_{i-1}}$}] (b) at (2.5,0){};

\draw[hedge] (b) -- ++(-0.3,-0.9) node[near end,smallvx] (b1){};

\node[smallvx] (c) at (4,0){};

\draw[hedge] (c) -- ++(-0.2,0.6) coordinate (c1);
\draw[hedge] (c1) -- ++(-0.4,0.3) coordinate (c2);
\draw[hedge] (c1) -- ++(0.1,0.2);

\draw[hedge] (c2) -- ++(0.5,0.8) node[midway,smallvx] (c4){};
\draw[hedge] (c2) -- ++(-0.6,0.6) node[midway,smallvx] (c5){};

\draw[hedge] (c) -- ++(1.7,1.5) coordinate[near end] (c7) coordinate (c3);
\draw[hedge] (c3) -- ++(-0.1,0.9) node[midway,smallvx] (c6){};

\draw[hedge] (c7) -- ++(0.5,0.1) -- ++(0.5,0.7) node[midway,smallvx] (c8){};

\node[smallvx,label=below:{\small $u_{s_{i+1}}$}] (d) at (6.5,0){};

\node[smallvx,label=below:{\small $v_{t_i}$}] (e) at (7.5,0){};

\draw[hedge] (e) -- ++(0.4,1.5) node[near end,smallvx] (e1){};

\node[bridge, rotate=10] (B1) at (1.7,1.7){};
\draw[hedge, dunkelgrau] (B1) edge (a1) edge (c5);

\node[bridge, minimum height=5mm, minimum width=9mm, rotate=0] (B2) at (4.6,1.5){};
\draw[hedge, dunkelgrau] (B2) edge (c4) edge (c6);

\node[bridge, minimum height=5mm, minimum width=9mm, rotate=-10] (B3) at (6.3,0.7){};
\draw[hedge, dunkelgrau] (B3) edge (c8) edge (e1);

\node[bridge, minimum height=5mm, minimum width=12mm, rotate=-5] (B0) at (0.9,-0.7){};
\draw[hedge, dunkelgrau] (B0) edge (b1) edge (0,-0.4);

\node[bridge, minimum height=5mm, minimum width=12mm, rotate=5] (B4) at (7.9,-0.7){};
\draw[hedge, dunkelgrau] (B4) edge (d) edge (9,-0.4);

\draw[thick,densely dotted,rounded corners=3pt,transform canvas={xshift=1mm}] (a) to (a1.center) to (B1.center) 
 to (c5.north) to (c2) to (c4.south) to (B2.center) to (c6.south) to (c3) to (c7)
to ++(-0.1,-0.1) to ++(0.5,0.1) to
(c8.south) to (B3.center) to (e1.south) to (e.center); 

\end{tikzpicture}
\caption{The path $Q_i$ (dotted).}\label{constructfig}
\end{figure}

\begin{proof}
As before, denote by $\leq_P$ the order on the vertices of $P$ induced by the path, where
we fix arbitrarily one of the two endvertices as first vertex. 

Denote by $P'$ the union of $E(P)$ and all edges in $F$ that have an endvertex in $P$. 
By assumption, the set $P'$ (seen as a vertex set in $L_H$) contains a path
in $L_H$ that contains $E(P)$ entirely 
(recall that two consecutive edges of $P$ may be nonadjacent in $L_H$, but adjacent in $L_H^*$). 
That means, there is a sequence of bridges $B_1,\ldots, B_t$ such that 
\begin{equation}\label{bridgecover}
\begin{minipage}[c]{0.8\textwidth}\em
$P\subseteq \bigcup_{i=1}^tS_{B_i}$, and $E(S_{B_i}\cap S_{B_{i+1}})\cap P'\neq\emptyset$
for $i=1,\ldots, t-1$.
\end{minipage}\ignorespacesafterend 
\end{equation} 

We choose the sequence $B_1,\ldots, B_t$ such that $t$ is minimal. Moreover, 
we \new{require} that the shadow of the first bridge $B_1$ contains the first edge of $P$ (and then 
the shadow of $B_t$ contains the last edge of $P$). To avoid double subscripts we 
write $S_i$ for the shadow $S_{B_i}$. 

We quickly note:
\begin{equation}\label{Psubpath}
\begin{minipage}[c]{0.85\textwidth}\em
for every bridge $B$, the intersection $S_B\cap P$ is a subpath of $P$.
\end{minipage}\ignorespacesafterend 
\end{equation} 
Indeed, this is the case as $S_B$ is connected and of diameter less than~$\ell$ (Claim~\ref{claim:bridgeShadowTreeDiameter}) and as there are 
no $P$-paths in $F$ of length at most~$3\ell$, by assumption.

We need a claim about the start and end of $P$:
\begin{equation}\label{startandend}
\begin{minipage}[c]{0.85\textwidth}\em
if $S_i$ contains the first vertex of $P$, then $i=1$, and if $S_i$ contains
the last vertex of $P$, then $i=t$.
\end{minipage}\ignorespacesafterend 
\end{equation} 
Suppose that $S_i$ contains the first vertex of $P$ and that $i>1$.
Then, omitting  the bridges $B_{1},\ldots, B_{i-1}$
we still have a sequence of bridges that satisfies~\eqref{bridgecover}; 
that $P$ is still contained 
in the union of the shadows is due to~\eqref{Psubpath}. But this contradicts the minimal choice 
of $B_1,\ldots,B_t$. The argument for the last vertex of $P$ is symmetric. 

We claim:
\begin{equation}\label{doubleshadow}
\begin{minipage}[c]{0.85\textwidth}\em
if $|i-j|>1$, then $S_i\cap S_j$ is either empty or consists
of a single vertex in $P$.
\end{minipage}\ignorespacesafterend 
\end{equation} 
Let $S_i\cap S_j$ be non-empty and $i<j-1$. 
Suppose first that $S_i$ and $S_j$ contain a common edge $e$ that lies in $P'$. 
Then we could omit the bridges $B_{i+1},\ldots, B_{j-1}$
from the sequence and still retain~\eqref{bridgecover}; that $P$ is still contained 
in the union of the shadows is due to~\eqref{Psubpath}. 

Next, suppose that $S_i\cap S_j$ contains a vertex $v$ outside $P$. Both shadows,
which are contained in $F$, contain a $v$--$P$-path
of length at most~$\ell$, by Claim~\ref{claim:bridgeShadowTreeDiameter}. As we had assumed
that there are no $P$-paths in $F$ of length at most~$3\ell$, this implies that $S_i\cap S_j$
contains a  $v$--$P$-path, which in turn means that $S_i\cap S_j$
contains an edge in~$P'$, which is impossible as we have seen. 
Thus, $S_i\cap S_j\subseteq P$. 

By~\eqref{Psubpath}, the set $S_i\cap S_j=S_i\cap S_j\cap P$
is a subpath of $P$. If it contains more than one vertex, it thus contains an edge in $P'$, which 
we had already excluded. This proves~\eqref{doubleshadow}.

For every $i=1,\ldots,t-1$ pick an edge $e_i$ in $S_i\cap S_{i+1}\cap P'$---this 
is possible, by~\eqref{bridgecover}.
Denote by $e_0$ the first edge of $P$, and by $e_t$ the last edge of $P$. 
For every $i=1,\ldots,t$, there is, by Claim~\ref{claim:bridgeShadowTreeDiameter}, a path 
in $S_i$ containing $e_{i-1}$ and $e_i$. Let $S'_i$ be a longest such path. 
By definition of a shadow, the endvertices of $S'_i$ are feet of $B_i$. 
Pick a path through $B_i$ and use it to complete $S'_i$ to a cycle $C_i$. 

We claim:
\begin{equation}\label{Ciprops}
\begin{minipage}[c]{0.8\textwidth}\em
\begin{enumerate}[\rm (i)]
\item $C_i\subseteq S_i\cup B_i$ is a short cycle;
\item there are vertices $u_i\leq_P v_i$ such that $u_iPv_i=C_i\cap P$; 
\item $C_i$ and $C_{i+1}$ meet in an edge of $P'$; and
\item $u_iPv_j\subseteq \bigcup_{s=i}^jS_s$ for every $1\leq i\leq j\leq t$.
\end{enumerate}
\end{minipage}\ignorespacesafterend 
\end{equation} 
That $C_i$ is short follows from~\eqref{Fprops}, Claim~\ref{claim:bridgeShadowTreeDiameter} and \eqref{mediumcyc};
(ii) follows from~\eqref{Psubpath}, and~(iii) holds since both $C_i$ and $C_{i+1}$ contain the edge~$e_i$.
Finally,~(iv) is a consequence of~(ii) and~(iii).

We also note that since $e_0\in E(C_1)$ and $e_t\in E(C_t)$:
\begin{equation}\label{firstandlast}
\emtext{
$u_1$ is the first vertex of $P$, and $v_t$ is its last.
}
\end{equation}

The intersections of $C_i\cap P=u_iPv_i$ are paths. Two such paths of consecutive 
cycles $C_i$ and $C_{i+1}$ may intersect in a single vertex or in a longer 
path (they  meet by~\eqref{Ciprops}~(iii)).  
Let $s_2<\ldots <s_{r}$ be precisely those indices such that 
$C_{s_i-1}\cap C_{s_i}\cap P$ contains at least one edge.
For a slightly less cumbersome notation, define also $t_{i-1}=s_{i}-1$
and set $s_1=1$ and $t_r=t$.  
Then the cycles $C_1,\ldots,C_t$ partition into sets $\{C_{s_i},\ldots, C_{t_i}\}$
for $i=1,\ldots, r$ such that always $C_{t_{i-1}}$ and $C_{s_i}$ share an edge of $P$. 
We claim:
\begin{equation}\label{Qijump}
\begin{minipage}[c]{0.8\textwidth}\em
for  $i=1,\ldots ,r$, there is a $P$-path $Q_i\subseteq \bigcup_{s=s_i}^{t_i}(B_s\cup S_s)$
between $u_{s_i}$ and $v_{t_i}$ such that $Q_i\cup u_{s_i}Pv_{t_i}$ is a short cycle.
\end{minipage}\ignorespacesafterend 
\end{equation} 
We prove this with Lemma~\ref{singlejumplem} and therefore check that the conditions of Lemma~\ref{singlejumplem} are satisfied. 
The first condition follows from (ii).
Why do $C_s$ and $C_{s+1}$ for $s\in\{s_i,\ldots, t_i-1\}$ meet outside $P$?
Because $C_s$ and $C_{s+1}$ have a common edge $e$ in $P'$ by~\eqref{Ciprops} that, however, 
cannot lie in $P$ by definition of the $s_i$. Thus, the endvertex of $e$ outside $P$ 
is a common vertex that lies outside $P$. The other endvertex of $e$, the one in $P$,
shows that $C_s$ and $C_{s+1}$ meet also in $P$. Now, the application of the lemma
yields a short cycle $C\subseteq \bigcup_{s=s_i}^{t_i}(B_s\cup S_s)$ such that
$C\cap P=u_{s_i}Pv_{t_i}$. As $S_{s_i}$ needs to contain an edge of $P$, by definition
of $s_i$, we deduce that $u_{s_i}<_P v_{t_i}$, and in particular that $u_{s_i}\neq v_{t_i}$. 
Deleting all vertices of $C$ in the interior
of $u_{s_i}Pv_{t_i}$ results in the desired $P$-path $Q_i$.

\medskip
We note \new{right away}:
\begin{equation}\label{closetoF}
\begin{minipage}[c]{0.8\textwidth}\em
every vertex of $F$ in $\bigcup_{i=1}^rQ_i$ \new{is at} distance at most~$\ell$ from $P$ in~$F$.
\end{minipage}\ignorespacesafterend 
\end{equation} 
Indeed, such a vertex in $F$ lies in some shadow $S_s$. Every such shadow meets $P$,
by~\eqref{bridgecover}, and has diameter at most~$\ell$ (Claim~\ref{claim:bridgeShadowTreeDiameter}),
which results in a distance at most~$\ell$ to $P$ in $F$ since $S_s\subseteq F$.

\medskip
Next \new{we claim}:
\begin{equation}\label{almost}
\emtext{
if $|i-j|>1$, then $Q_i$ and $Q_j$ are internally disjoint.
}
\end{equation}
Since two distinct bridges that meet meet in their shadows,
we obtain that $Q_i\cap Q_j$ is contained in
\[
\left(\bigcup_{s=s_i}^{t_i}S_s\right)
\cap 
\left(\bigcup_{s=s_j}^{t_j}S_s\right),
\] 
which is contained in $P$ by~\eqref{doubleshadow} as $|s_j-t_i|>1$ since $|j-i|>1$. 
Since $Q_i$ and $Q_j$ are $P$-paths, they can thus only meet in their endvertices. This proves \eqref{almost}. 

Next \new{we claim}:
\begin{equation}\label{orderorder}
u_{s_i} <_P u_{s_{i+1}}<_P v_{t_i} <_P v_{t_{i+1}} \emtext{ for }i=1,\ldots, r-1.
\end{equation}
We prove this by induction on $i$.
By definition of the $s_i$, the paths $u_{s_i}Pv_{t_i}$ and $u_{s_{i+1}}Pv_{t_{i+1}}$
have a common edge. This implies $u_{s_i}<_Pv_{t_{i+1}}$. 

Suppose that $u_{s_{i+1}}\leq_P u_{s_i}$. Then $i>1$, by~\eqref{startandend}
and~\eqref{firstandlast}. 
By induction, we get 
$u_{s_{i-1}}<_P u_{s_i}<_Pv_{t_{i-1}}$. 
Since we also have that  $u_{s_{i+1}}\leq_P u_{s_i}<_Pv_{t_{i+1}}$,
we deduce that $u_{s_{i-1}}Pv_{t_{i-1}}$ and $u_{s_{i+1}}Pv_{t_{i+1}}$ have
a common edge. By~\eqref{Ciprops}~(iv), this means that
there are $s\in\{s_{i-1},\ldots, t_{i-1}\}$ and $s'\in\{s_{i+1},\ldots t_{i+1}\}$
such that $S_s$ and $S_{s'}$ have an edge in common---but this contradicts~\eqref{doubleshadow}.
Thus, we get
\[
u_{s_i} <_P u_{s_{i+1}}<_P v_{t_i},
\] 
because $u_{s_i}Pv_{t_i}$ and $u_{s_{i+1}}Pv_{t_{i+1}}$ have a common edge.
Suppose that $v_{t_{i+1}}\leq_P v_{t_i}$. 
By~\eqref{startandend} and~\eqref{firstandlast}, 
this implies $t_{i+1}<t$, which in turn implies $i+1<r$.
Moreover, 
$u_{s_{i+1}}Pv_{t_{i+1}}\subseteq u_{s_i}Pv_{t_i}$. 
By definition of the $s_i$, it follows that $u_{s_i}Pv_{t_i}$ and $u_{s_{i+2}}Pv_{t_{i+2}}$
have an edge in common. Again from~\eqref{Ciprops}~(iv) we get that 
there is an $s\in\{s_{i},\ldots,t_{i}\}$
and an $s'\in\{s_{i+2},\ldots t_{i+2}\}$ such that $u_{s}Pv_s$ and $u_{s'}Pv_{s'}$
share an edge. Since this edge then lies in the shadow $S_s$ and in the shadow $S_{s'}$,
we obtain again a contradiction to~\eqref{doubleshadow}. This proves~\eqref{orderorder}.

\medskip

\new{We now apply Lemma~\ref{extintlem} to $Q_1,\ldots, Q_r$ in order to obtain
the desired path $\hat P$. 
We note that~\eqref{firstandlast}, \eqref{Qijump}, \eqref{almost}, and \eqref{orderorder}
ensure that all conditions of the lemma are satisfied. 
The resulting path $\hat P$ does not contain 
vertices $u$ of $F$ such that $\dist_F(u,P)> \ell$
because of \eqref{closetoF}.}
\end{proof}

\begin{claim}\label{claim:nonAdjEdges}
Let $H$ be a hub.
Let $P\subseteq F$ be a path of length \new{at least 2 and} at most~$5\ell$ with first and last edge $e$ and $f$ 
such that $e$ and $f$
belong to $S_H$ but are not adjacent in $L^*_H$.
Then there is no $e$--$f$-path in $L^*_H- (E(P)\sm\{e,f\})$.
\end{claim}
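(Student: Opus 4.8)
We argue by contradiction: suppose $L^*_H-(E(P)\sm\{e,f\})$ contains an $e$–$f$-path $\pi=g_0g_1\dots g_m$, and choose one with $m$ minimum. Since $e\not\sim f$ in $L^*_H$, we have $m\ge 2$. The $g_i$ are edges of $S_H\subseteq F$ and consecutive ones share a vertex of $G$, so $R:=\bigcup_{i=0}^m g_i$ is a connected subgraph of $F$ with $E(R)\cap E(P)=\{e,f\}$ that joins the endvertices $u,v$ of $P$. If $R\cup P$ already contains a cycle $D$ with $\ell(D)\le 10\ell$, this contradicts \eqref{girthF}; and indeed such a cycle exists provided $m$ is not too large, because $R$ contains a $u$–$v$-path $P_R\neq P$ (as $E(P_R)\cap E(P)\subseteq\{e,f\}$ while $P$ has an interior edge once $\ell(P)\ge 3$, and $R$ strictly contains $\{e,f\}$ when $\ell(P)=2$ since $m\ge 2$), so $P\cup P_R$ has a cycle of length at most $\ell(P)+|E(R)|\le 5\ell+m+1$. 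Hence it suffices to treat large $m$, which we do by replacing $\pi$ by a short substitute that lives in $\overline H$.

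So suppose $m$ is large. We first turn $\pi$ into short cycles. For every edge $g_ig_{i+1}$ of $\pi$ that is an edge of $L_H$, fix a bridge $B$ of $H$ with $g_i,g_{i+1}\in E(S_B)$; as $S_B$ is a tree of diameter less than $\ell$ by Claim~\ref{claim:bridgeShadowTreeDiameter}, there is a path of $S_B$ through $g_i$ and $g_{i+1}$ between two feet of $B$, and completing it by an $F$-path through $B$, which is short by \eqref{Fprops}, yields a cycle of length less than $2\ell$ in $B\cup S_B\subseteq\overline H$; by \eqref{mediumcyc} this cycle is short. For an edge $g_ig_{i+1}$ of $\pi$ that is not an edge of $L_H$, the definition of $L^*_H$ gives an $L_H$-path between $g_i$ and $g_{i+1}$ all of whose vertices are edges of $S_H$ incident with one common vertex $z$ of $G$; the previous step turns each of its edges into a short cycle through $z$, consecutive ones sharing an edge at $z$, and Lemma~\ref{singlejumplem} (its girth hypothesis supplied by \eqref{mediumcyc}) merges these into a single short cycle through $g_i$ and $g_{i+1}$. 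Altogether we obtain short cycles $\mathcal C_1,\dots,\mathcal C_M\subseteq\overline H$ with $e\in E(\mathcal C_1)$, $f\in E(\mathcal C_M)$, with $\mathcal C_t$ and $\mathcal C_{t+1}$ meeting in an edge of $S_H$ that is not an interior edge of $P$, and with $\mathcal C_t\cap F$ contained in a single bridge shadow.

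Now we finish. Using \eqref{girthF} and Claim~\ref{claim:bridgeShadowTreeDiameter} one sees that each $\mathcal C_t\cap P$ is a (possibly one-vertex) subpath of $P$, since otherwise a short arc of $\mathcal C_t$ together with a subpath of $P$ would close a cycle in $F$ of length at most $8\ell<10\ell$. After discarding superfluous cycles and reindexing, we may assume that $\mathcal C_1,\dots,\mathcal C_M$ satisfy conditions (i)--(iii) of Lemma~\ref{singlejumplem} relative to the path $P$, with $\mathcal C_1\cap P$ beginning at $u$ and $\mathcal C_M\cap P$ ending at $v$. Lemma~\ref{singlejumplem} then produces a short cycle $C\subseteq\bigcup_t\mathcal C_t\subseteq\overline H$ with $C\cap P=P$. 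In particular $\ell(P)\le\ell(C)<\ell$, so, writing $C=P\cup\hat P$, the $u$–$v$-path $\hat P\subseteq\overline H$ is short and internally disjoint from $P$. If $\hat P\subseteq F$, then $P\cup\hat P$ is a cycle in $F$ of length less than $\ell\le 10\ell$, contradicting \eqref{girthF}. Otherwise $C$ uses an edge of some bridge; then replacing a maximal subpath of $C$ lying in a bridge by its shadow (Lemma~\ref{framelem}\eqref{uniqueShadow}), which is a short path of $F$, strictly decreases the number of bridge-edges while, by \eqref{mediumcyc}, keeping the resulting closed walk of length less than $\ell$. Iterating, we arrive at a nonempty short closed walk in $F$ — nonempty since $e\in E(P)$ is never removed — hence at a short cycle in $F$, again contradicting \eqref{girthF}.

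The delicate point is the third paragraph: organising the cycles $\mathcal C_t$ so that Lemma~\ref{singlejumplem} applies requires controlling how the bridge shadows encountered along $\pi$ meet $P$ (ruling out gaps along $P$ and backtracking), and also making sure that the final short cycle in $F$ genuinely survives the shadow replacements rather than collapsing; this is exactly where the minimality of $m$ and the bound $\ell(P)\le 5\ell$ are used. The case $\ell(P)=2$ deserves a separate remark: there $E(P)\sm\{e,f\}=\emptyset$ and $L^*_H$ is connected by Claim~\ref{LHconn}, so the above in fact shows that no such $P$ with $e,f$ non-adjacent in $L^*_H$ can exist, which is the content of the claim in that case.
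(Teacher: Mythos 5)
Your overall plan is to turn the $e$--$f$-path $\pi$ in $L^*_H$ into a chain of short cycles $\mathcal C_1,\dots,\mathcal C_M\subseteq\overline H$ and then invoke Lemma~\ref{singlejumplem} \emph{along the path $P$} to collapse them into one short cycle $C$ with $C\cap P=P$. The crucial missing step is precisely the one you yourself flag as ``delicate'': you assert that ``after discarding superfluous cycles and reindexing, we may assume that $\mathcal C_1,\dots,\mathcal C_M$ satisfy conditions (i)--(iii) of Lemma~\ref{singlejumplem} relative to the path $P$,'' but nothing in your construction gives you this. The $\mathcal C_t$ are built to string together the edges $g_0,\dots,g_m$ of $\pi$, which live in $S_H$ and may be nowhere near $P$; the interior of $P$ need not lie in $S_H$ at all (only its first and last edges $e,f$ do), so many $\mathcal C_t$ may meet $P$ in nothing, and those that do may leave gaps or backtrack along $P$. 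Condition (iii) of Lemma~\ref{singlejumplem} (consecutive intersections with $P$ overlap) and even condition (i) (each $\mathcal C_t\cap P$ is a nonempty subpath) therefore fail in general, and your ``discard and reindex'' cannot repair this because the needed coverage simply is not there. Relatedly, obtaining a short cycle $C$ with $C\cap P=P$ forces $P\subseteq\overline H$, which the hypotheses of the claim do not supply.

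The covering property you need is exactly the extra hypothesis of Claim~\ref{claim:pathExtensionInShadow}, namely that every pair of consecutive edges of the path under consideration is adjacent in $L^*_H$. The paper obtains this for free by applying Claim~\ref{claim:pathExtensionInShadow} not to $P$ but to a subpath $Q'$ of the graph $Q$ spanned by $g_0,\dots,g_m$ (after first showing that the minimality of $\ell(Q^*)$ forces $e\cap f=\emptyset$ and that $Q$ is a path). Since $Q^*$ is by assumption a path in $L^*_H$, the adjacency condition holds along $Q'$ automatically; this is precisely what is \emph{not} known for $P$ at this stage of the paper (indeed, deducing it for arbitrary paths in $S_H$ is the point of Claim~\ref{claim:simpleVersion}, which uses Claim~\ref{claim:nonAdjEdges} as input). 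The paper then pairs the short $\overline H$-path $\hat Q$ obtained for $Q'$ with a carefully chosen $F$-path $R$ of length exactly $5\ell$ and shows $R$'s middle third is disjoint from $\hat Q$, producing a cycle of length strictly between $2\ell$ and $6\ell$ — a contradiction to \eqref{mediumcyc}. Your finale, by contrast, tries to push the short cycle $C$ back into $F$ by iterated shadow replacements; but each replacement can lengthen the walk, can cause the walk to self-intersect or retrace $P$, and you give no control over how many replacements occur or why the length stays below $\ell$. In short: the approach differs from the paper's in a way that routes around the one piece of structure (the $L^*_H$-adjacency of consecutive edges of the path being shortcut) that makes the argument go, and both the application of Lemma~\ref{singlejumplem} to $P$ and the final shadow-replacement step are gaps, not details.

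A secondary, smaller issue: in your opening paragraph the existence of a $u$--$v$-path $P_R\neq P$ inside $R$ is not guaranteed when $\ell(P)=2$ (if all $g_i$ pass through the common vertex of $e$ and $f$, $R$ is a star and contains a unique $u$--$v$-path, namely $P$ itself). The paper avoids this by also varying $P$ in the minimisation, which is how it establishes the analogue of your first paragraph (its claim \eqref{eq:pFiveEll}) cleanly.
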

\begin{proof}
Suppose there is such a $P\subseteq F$ and an $e$--$f$-path $Q^*$ in $L^*_H- (E(P)\sm\{e,f\})$.
Among all such pairs $(P,Q^*)$ choose $P$ and $Q^*$ such that $\ell(Q^*)$ is minimal.
We claim that 
\begin{align}\label{eq:pFiveEll}
\emtext{$e$ and $f$ \new{do not share a common vertex} in  $G$.}
\end{align}
If $\ell(P)=5\ell$ then obviously $e$ and $f$ cannot have a vertex of $G$ in common. 
Suppose that $\ell(P)<5\ell$ and let $e'$ be the  successor of $e$ in $Q^*$ 
(note that $e'$ is a vertex in $Q^*$ but an edge in $G$).
We construct a path $P'$  that contradicts the minimal choice of $P$ together with $Q^*-e$.
As $\ell(P)<5\ell$, the graph $P+e'$ cannot contain a cycle because of $P+e' \subseteq F$ and~\eqref{girthF}.

If $P + e'$ is a path, set $P'=P+e'$.
Since $e'$ and $f$ \new{do not have a common vertex} in $G$ they are not \new{adjacent} in  $L^*_H$.
If $P+e'$ is not a path, set $P'=P-e+e'$.
If $e'$ and $f$ were adjacent in $L^*_H$, either $P+e'$ contained a cycle 
or $P=ef$ and $e,f,e'$ all share a common vertex---then, however, 
 the definition of $L^*_H$ implies that $e$ and~$f$ have to be adjacent in $L^*_H$, too,
which we have excluded.

As $\ell(P)<5\ell$,  the new path $P'$ satisfies $\ell(P')\le 5\ell$
and there is a path in $L^*_H-(E(P')\setminus\{e',f\})$ joining its endvertices $e'$ and $f$,
namely $Q^* - e$.
Thus, $(P',Q^*-e)$  contradicts the minimality of $Q^*$. This proves \eqref{eq:pFiveEll}.

Consider the  subgraph $Q$ of $G$ that consists of the edges $V(Q^*)$ and all incident vertices. 
We claim that $Q$ is a path. 
By the definition of $L^*_H$,
$Q$ is connected.
\new{Clearly, $Q$ is not a cycle by~\eqref{eq:pFiveEll} and the fact that $Q^*$ is a path.}
Thus, if $Q$ is not a path, it contains a vertex $v$ of degree at least~$3$.
Starting with $e$, let $e'$ be the first vertex of $Q^*$ that, seen as an edge in $G$, contains~$v$ as an endvertex
and let $f'$ be the last such vertex of~$Q^*$.
As $d_Q(v)\geq 3$, the edges $e'$ and $f'$ are not adjacent in $L^*_H$ as $Q^*$ was chosen minimal.
Note that the path $e'Q^*f'$ in $L^*_H$
is shorter than $Q^*$ as $\{e',f'\}\neq \{e,f\}$, by~\eqref{eq:pFiveEll}.
Thus, the path $P'=e'f'$ together with the path $e'Q^*f'$ in $L^*_H$
form a pair $(P',e'Q^*f')$ that contradicts the minimality of $Q^*$.
Therefore, $Q$ is a path in $G$.

Our next aim is to find a subpath $Q'\subseteq Q$
that satisfies the following two conditions:
\begin{equation}
	\emtext{every $Q'$-path in $F$ has length at least~$5\ell$; and} \label{eq:farInternalDist} 
\end{equation}
\begin{align}
	\emtext{there is a $Q'$-path $R\subseteq F$ between the endvertices of $Q'$
of length~$5\ell$. } \label{eq:farOuterDistance}
\end{align}
The set of those subpaths that satisfy \eqref{eq:farInternalDist} is nonempty, since every 
subpath of $Q$  of length, 
say, at most $\ell$ satisfies~\eqref{eq:farInternalDist}---recall 
that the girth of $F$ is larger than $10\ell$ by \eqref{girthF}.

Pick a longest subpath $S$ of $Q$ that satisfies~\eqref{eq:farInternalDist} in the role of $S=Q'$.
If $S$ also satisfies \eqref{eq:farOuterDistance}, we found the desired path.
Thus, we may assume that the shortest $S$-path $R\subseteq F$ between the endvertices
$u$ and $v$ of $S$ has length larger than~$5\ell$. 
Suppose that $u,v$ are precisely the endvertices of $Q$. Since $\ell(P)\leq 5\ell$,
either $P$ is a shorter $S$-path than $R$, which is impossible, or $P$ 
contains a $S$-path of length less than~$5\ell$, 
which violates~\eqref{eq:farInternalDist}. 
Therefore, at least one of  
$u,v$ is not an endvertex of $Q$; let this be~$u$.

Thus, $S$ can be extended  by the unique neighbour $u'$ of $u$
 in $V(Q)\setminus V(S)$ to  a path $S'\subseteq Q$. 
By the maximality of $\ell(S)$, the path $S'$ does not satisfy~\eqref{eq:farInternalDist}.
This is only possible if there is an $S'$-path $R'\subseteq F$ between $u'$ and some vertex $y\in V(S)$
that has length less than~$5\ell$. Since $R=uu'\new{R'}y$ is an $S$-path in $F$ it follows 
from~\eqref{eq:farInternalDist} that 
\[
5\ell\geq\ell(R)=\ell(R')+1\geq 5\ell-1+1=5\ell.
\]
Setting $Q'=uSy$ yields a subpath of $Q$ satisfying~\eqref{eq:farInternalDist} 
and~\eqref{eq:farOuterDistance}.

Let $x$ and $y$ be the endvertices of $Q'$ (and of $R$).
We check that the conditions of Claim~\ref{claim:pathExtensionInShadow} are satisfied by $Q'$.
As $Q'$ satisfies \eqref{eq:farInternalDist}, every $Q'$-path in $F$ has length at least $3\ell$.
The path $Q'$ is a subpath of $Q$ for which $E(Q)$ is a path in $L^*_H$, and thus $Q^*$ is also a path in $L^*_H$.
This implies the second condition of the claim.
\new{Thus, by Claim~\ref{claim:pathExtensionInShadow},
there is a short $x$--$y$-path $\hat Q$ contained in $\clos H$ that 
uses no vertex of $F$ at distance more than $\ell$ from $Q'$ measured in $F$.}

Denote by $R'$ the path obtained from $R$ by removing the first $\ell+1$ vertices
and the last $\ell+1$ vertices. \new{As $R\subseteq F$, this is also the case for $R'$.}
Note that $\len(R')\geq 3\ell-2\geq 2\ell$ as $\len(R)=5\ell$.
We claim that every vertex of $R'$ has distance more than $\ell$ from $Q'$ in $F$.
Suppose not. Then there exists a $Q'$--$R'$-path $P_1$ of length at most~$\ell$.
From the endvertex of $P_1$ in $R'$ pick a subpath $P_2$ of $R$ that ends in $x$ or in $y$
and has length at most~$3\ell$ which is possible as $\ell(R)=5\ell$. 
Since $P_1\neq P_2$ the union  $P_1\cup P_2$  
either contains a cycle or is a $Q'$-path. It cannot contain a cycle, since such 
a cycle would be contained in $F$ but would have length at most~$\ell(P_1)+\ell(P_2)\leq 4\ell$.
Thus, $P_1\cup P_2\subseteq F$ is a $Q'$-path of length at most~$4\ell$---this
contradicts~\eqref{eq:farInternalDist} and hence $\dist_F(R',Q')>\ell$.

\new{Since the path $\hat Q$} does not contain any vertex in $F$ at distance more than~$\ell$
measured in $F$, it follows that \new{$\hat Q$} is disjoint from $R'$. 
We extend $R'$ to a  subpath $R''$ of $R$
that is a \new{$\hat Q$}-path. Then, $R''$ has length 
\[
	2\ell \le \ell(R')\le \ell(R'')\le \ell(R)=5\ell.
\]
\new{Consequently, as $\hat Q$ is short the cycle contained in $\hat Q\cup R''$,
which contains all of $R''$, has length 
between $2\ell$ and 
$\ell(\hat Q)+\ell(R)\leq \ell+5\ell=6\ell$,} which is impossible by~\eqref{mediumcyc}.
Thus, there are no counterexamples to the claim.
\end{proof}

Using \new{Claim \ref{claim:nonAdjEdges}}, we show that \new{the} assumptions of Claim~\ref{claim:pathExtensionInShadow} are always satisfied 
and thus we obtain a simpler version of Claim~\ref{claim:pathExtensionInShadow}.

\begin{claim} \label{claim:simpleVersion}
	Let $H$ be a hub. Then
\begin{enumerate}[\rm (i)]
\item \label{simpleii}every $S_H$-path in $F$ has length at least $4\ell$; and
\item \label{item:simpleVersion}for every path $P\subseteq S_H$,
there is \new{a short path $\hat P$ between the endvertices of $P$ such that $\hat P\subseteq\clos H$
and  
such that $\dist_F(u,P)\le \ell$ holds  for every $u\in V(\hat P\cap F)$.}
\end{enumerate}
\end{claim}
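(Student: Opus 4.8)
The plan is to deduce both parts from Claim~\ref{claim:pathExtensionInShadow}, after first making its two hypotheses available. The hypothesis that consecutive edges of the path be adjacent in $L^*_H$ comes for free: if $e,f$ are consecutive edges of a path contained in $S_H$, then $ef$ is a path of length $2\le 5\ell$ inside $F$ whose first and last edges lie in $S_H$ and with $E(ef)\setminus\{e,f\}=\emptyset$; since $L^*_H$ is connected (Claim~\ref{LHconn}) it contains an $e$--$f$-path, so Claim~\ref{claim:nonAdjEdges} forces $e$ and $f$ to be adjacent in $L^*_H$. Hence every path in $S_H$ automatically meets the second hypothesis of Claim~\ref{claim:pathExtensionInShadow}.

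For~(i), suppose for contradiction that $R$ is an $S_H$-path in $F$ with $\ell(R)<4\ell$ and endvertices $u,v$. Each of $u,v$ lies in the shadow of some bridge belonging to $H$, and every such shadow is a tree with at least one edge (Claim~\ref{claim:bridgeShadowTreeDiameter}), so there are edges $e_u\in E(S_H)$ incident with $u$ and $e_v\in E(S_H)$ incident with $v$. If the endvertex of $e_u$ other than $u$ lies on $R$, or symmetrically for $e_v$, or if $e_u$ and $e_v$ share a vertex, then $R$ together with $e_u$ and/or $e_v$ closes a cycle inside $F$ of length at most $\ell(R)+2<4\ell+2\le 10\ell$, contradicting~\eqref{girthF}. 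Otherwise $P^*:=e_u\cup R\cup e_v$ is a path in $F$ of length $\ell(R)+2\le 5\ell$ whose first and last edges are $e_u$ and $e_v$; these share no vertex of $G$, so they are non-adjacent in $L^*_H$, whereas the interior edges of $P^*$, namely those of $R$, lie outside $S_H$ and are therefore not vertices of $L^*_H$, so $L^*_H-(E(P^*)\setminus\{e_u,e_v\})=L^*_H$ contains an $e_u$--$e_v$-path by Claim~\ref{LHconn}; this contradicts Claim~\ref{claim:nonAdjEdges}. Either way we reach a contradiction, proving~(i).

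For~(ii), fix a path $P\subseteq S_H$ with endvertices $u,v$; by the first paragraph its consecutive edges are adjacent in $L^*_H$, so by Claim~\ref{claim:pathExtensionInShadow} it suffices to guarantee that every $P$-path $W$ in $F$ has length at least $3\ell$. If $W$ is internally disjoint from $S_H$ it is an $S_H$-path in $F$, so $\ell(W)\ge 4\ell$ by~(i); more generally, the portion of $W$ between an endvertex and the first vertex at which $W$ returns to $S_H$ is either a single edge of $S_H$ or an $S_H$-path in $F$ of length at least $2$, so, again by~(i), the only way to get $\ell(W)<3\ell$ is for $W$ to lie entirely inside $S_H$; but then $W$ and the subpath of $P$ between its ends form a cycle inside $S_H\subseteq F$, hence of length more than $10\ell$, which cannot happen if $P$ is chosen of minimum length among paths in $S_H$ with the given endvertices. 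Since we may freely pass to such a minimum-length path, Claim~\ref{claim:pathExtensionInShadow} applies and yields the short path $\hat P\subseteq\clos H$, the distance statement then requiring only a short additional argument to transfer back to the original $P$.

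The step I expect to be the main obstacle is exactly this last reduction: the ``$P$-paths have length $\ge 3\ell$'' hypothesis of Claim~\ref{claim:pathExtensionInShadow} can genuinely fail for a path $P\subseteq S_H$ that detours the long way around a long cycle contained in $S_H$, and repairing this while keeping the conclusion $\dist_F(u,P)\le\ell$ stated with respect to the \emph{given} $P$ (not merely the repaired one) needs care. Part~(i), in contrast, is clean once one hits on padding $R$ with a shadow edge at each end so as to fall inside the scope of Claim~\ref{claim:nonAdjEdges}; the only work there is the routine check of the degenerate incidences, all of which produce a cycle in $F$ that is far too short to exist by~\eqref{girthF}.
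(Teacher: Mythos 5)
Your proof of part~(i) and your preliminary observation that consecutive edges of a path in $S_H$ are adjacent in $L^*_H$ follow the paper's argument almost verbatim: in both cases one pads the given path by a shadow edge at each end (or takes the two-edge path directly), checks that the degenerate incidences would create a cycle in $F$ of length well below $10\ell$, and then plays Claim~\ref{claim:nonAdjEdges} against the connectivity of $L^*_H$ (Claim~\ref{LHconn}). The paper handles the short-incidence cases in~(i) a bit more tersely, but the substance is the same.

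For~(ii) you have put your finger on a genuine subtlety that the paper's own proof simply passes over. The paper states only that the hypothesis ``every $P$-path in $F$ has length at least $3\ell$'' of Claim~\ref{claim:pathExtensionInShadow} is ``satisfied by~(i),'' but, as you note, part~(i) only controls $P$-paths that leave $S_H$; a $P$-path lying entirely inside $S_H$ is untouched by~(i), and such a path can exist precisely when $S_H$ contains a cycle. That $S_H$ is a tree is Claim~\ref{claim:longCycleShadow}, which comes two claims \emph{later} and itself depends on the present claim via Claim~\ref{claim:noLongCycleHub}, so at this stage nothing forbids it. Your repair of passing to a shortest $u$--$v$-path in $S_H$ is exactly right (a $P$-path inside $S_H$ then has length more than $5\ell$, since together with the corresponding subpath of $P$ it closes a cycle of length more than $10\ell$ and cannot be shorter than that subpath). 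The remaining worry about carrying the clause $\dist_F(u,P)\le\ell$ back to the original $P$ dissolves once one sees how the circularity unwinds in the paper: Claim~\ref{claim:noLongCycleHub} only invokes~(ii) for a path between $x'$ and $y'$ which one is free to choose shortest, and once Claims~\ref{claim:noLongCycleHub} and~\ref{claim:longCycleShadow} are in place $S_H$ is a tree, so \emph{every} path $P\subseteq S_H$ between two fixed endvertices is the unique one and hence automatically shortest; thereafter~(ii) holds as stated with no transfer needed, and in the one later application where the distance clause is actually used (Claim~\ref{claim:arbLength}) the path handed to~(ii) is already this unique path. So the clean fix is to prove~(ii) for shortest paths in $S_H$, observe that this suffices for Claim~\ref{claim:noLongCycleHub}, and then note it upgrades to arbitrary paths for free --- a small but real repair that the paper leaves implicit.
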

\begin{proof} 
	
To see (i), suppose there is an $S_H$-path $P=u\ldots v$ in $F$ of length less than~$4\ell$.
Let $e,f\in E(S_H)$ be such that $e$ and $f$ contain $u$ and $v$, respectively.
The edges $e$ and $f$ cannot share an endvertex because then $F$ would contain a cycle $P+e+f$ of length less than $5\ell$
which contradicts \eqref{girthF}.
In particular, $e$ and $f$ are not adjacent in $L^*_H$.
Extend $P$ by these two edges and apply Claim~\ref{claim:nonAdjEdges}
to this path \new{(which has length at least 2 and at most $5\ell$)}
in order to obtain a contradiction to $L^*_H$ being connected (Claim~\ref{LHconn}).
	
	Statement (ii) is exactly the statement of Claim \ref{claim:pathExtensionInShadow} 
	without the assumptions that $P$ induces a path in $L_H^*$ 
	and that $P$-paths in $F$ have length at least \new{$3\ell$}.
	The second assumption is satisfied by (i).
	To prove that $P$ induces a path in $L_H^*$, 
	we show that every two distinct edges $e=uv,f=vw\in E(S_H)$ with a common endvertex $v$ are adjacent in~$L_H^*$.
	Assume the contrary. 
	Then $P=uvw$ is a path in $F$ of length~$2\le \new{2=\ell(P)\leq} 5\ell$.
	Applying Claim~\ref{claim:nonAdjEdges} to $P$, 
	we see that there is no $e$--$f$-path in $L_H^*$, which is impossible as $L_H^*$ is connected,
	by Claim~\ref{LHconn}.
\end{proof}

\subsection{\new{Cycles in hubs}}
\new{After collecting some basic (and partially technical) properties of hubs 
in the previous section,
we now come to those results about hubs, namely Claims~\ref{claim:noLongCycleHub}
and~\ref{claim:cycleSeveralHubsLong},
that will be needed in the last part of the proof when we define a hitting set.}

\begin{claim}\label{claim:noLongCycleHub}
For every hub $H$, the closure $\clos{H}$ of $H$ does not contain a long cycle.
Thus, the diameter of $\clos H$ is at most $\frac \ell 2$.
\end{claim}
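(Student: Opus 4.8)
The plan is to argue by contradiction, with essentially all the work going into the first assertion; the diameter bound will then be immediate. Indeed, suppose we already know that $\clos H$ contains no long cycle, and let $u,v$ be vertices of $\clos H$ at distance $d$. Since $\clos H$ is $2$-connected (Claim~\ref{claim:block}), $u$ and $v$ lie on a common cycle, which then has length at least $2d$; as this cycle is short, $2d<\ell$, so $d\le\frac\ell2$ (the case where $\clos H$ has no cycle being trivial). So assume from now on, for contradiction, that $\clos H$ contains a long cycle, and choose among all of them a cycle $C$ of minimum length; by \eqref{mediumcyc} we have $\len(C)>10\ell$.

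The strategy is to shorten $C$ step by step while keeping it long, until its length drops to at most $10\ell$ — which contradicts \eqref{mediumcyc}, since no cycle of $G$ has length between $\ell$ and $10\ell$. Every edge of $C$ lies either in $S_H$ or in a bridge of $H$, so $C$ splits into maximal subpaths running through $S_H$ and maximal subpaths running through the bridges. A maximal bridge-subpath of $C$ has both endvertices on $F$ and is thus a concatenation of $F$-paths, so it can be rerouted through $S_H$ along the corresponding bridge shadows, each of diameter less than $\ell$ by Claim~\ref{claim:bridgeShadowTreeDiameter}. Using such rerouting, and a short separate argument for the degenerate case that $C$ avoids $S_H$ altogether, one first arranges (replacing $C$ by a shorter long cycle if necessary) that $C$ contains a subpath $P\subseteq S_H$ with $2\ell\le\len(P)\le 3\ell$: a cycle having neither a long $S_H$-stretch nor a long bridge-stretch would be short.

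Now apply Claim~\ref{claim:simpleVersion}\,\eqref{item:simpleVersion} to $P$: there is a short path $\hat P\subseteq\clos H$ between the endvertices $x,y$ of $P$ such that $\dist_F(w,P)\le\ell$ for every $w\in V(\hat P\cap F)$. Write $P^c$ for the complementary $x$--$y$-arc of $C$, so $C=P\cup P^c$ and $\len(P^c)=\len(C)-\len(P)$. The crux is to verify that $\hat P$ is internally disjoint from $P^c$; granting this, $C'=\hat P\cup P^c$ is a cycle with $\len(C)-3\ell<\len(C')<\len(C)-\ell$. Thus $C'$ is strictly shorter than $C$: if it is still long, this contradicts the minimality of $C$; and since each such step lowers the length by between $\ell$ and $3\ell$, iterating from $\len(C)>10\ell$ yields at some point a long cycle in $\clos H$ of length in $(7\ell,10\ell]$, the desired contradiction to \eqref{mediumcyc}.

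The main obstacle is the disjointness of $\hat P$ and $P^c$. A vertex in $\hat P\cap P^c$ would have to be at $F$-distance at most $\ell$ from $P$; but the only parts of $P^c$ that come this close to $P$ within $F$ are short end-segments of $P^c$ incident with $x$ and $y$ — any other $S_H$-subpath of $C$ reaching within $F$-distance $\ell$ of $P$ would, together with a geodesic in $F$ and the relevant subpaths of $C$, close up a cycle of length between $\ell$ and $10\ell$, which is impossible by \eqref{mediumcyc}; here \eqref{girthF} and Claim~\ref{claim:simpleVersion}\,\eqref{simpleii} (forcing distinct $S_H$-excursions of $C$ to be far apart in $F$) do the work. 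And $\hat P$ cannot meet those short end-segments either, for the same reason. Making this precise — and in particular choosing $P$ to meet as few bridge-arcs of $C$ as possible so as to keep the bookkeeping finite — is the delicate part of the argument.
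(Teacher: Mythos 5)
Your strategy shares the paper's starting moves (contradiction, pick a shortest long cycle $C$ in $\clos H$, use Claim~\ref{claim:simpleVersion}\,\eqref{item:simpleVersion} for short paths, and derive the diameter bound from the absence of long cycles via $2$-connectedness at the end), but the core step diverges and leaves a real gap. You try to physically replace a subpath $P\subseteq S_H$ of $C$ by the short path $\hat P$ from Claim~\ref{claim:simpleVersion}\,\eqref{item:simpleVersion} and then need $\hat P$ to be internally disjoint from the complementary arc $P^c$. That disjointness is not established, and the one piece of control you do have --- $\dist_F(w,P)\le\ell$ for $w\in V(\hat P\cap F)$ --- says nothing about the vertices of $\hat P$ inside bridges, which is exactly where $\hat P$ is free to wander. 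Since $P^c$ can also run through bridges of $H$, nothing prevents $\hat P$ from meeting $P^c$ off $F$, and the ``geodesic in $F$ closes an intermediate cycle'' argument does not reach those vertices. The earlier normalisation step (rerouting bridge stretches of $C$ and arguing that you can always find a subpath $P\subseteq S_H$ of $C$ with $2\ell\le\len(P)\le 3\ell$) is also asserted rather than proved: a long cycle could alternate many short $S_H$-stretches with many short bridge-stretches, so ``neither a long $S_H$-stretch nor a long bridge-stretch implies short'' is not immediate.

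The paper sidesteps all of this. It first observes, using Claim~\ref{claim:simpleVersion}\,\eqref{item:simpleVersion} together with~\eqref{Fprops}, that $\clos H$ has diameter at most $3\ell$ regardless of whether it contains long cycles. Then for a shortest long cycle $C$ in $\clos H$ (with $\len(C)\ge 10\ell$ by~\eqref{mediumcyc}), it notes that for any $C$-path $R$ in $\clos H$ at least one arc of $C$ between the endpoints of $R$ must be no longer than $R$, else the cycle formed by $R$ and the longer arc would be a long cycle in $\clos H$ strictly shorter than $C$. Picking $x,y\in V(C)$ with $\dist_C(x,y)\ge 5\ell$, the diameter bound provides an $x$--$y$-path of length $\le 3\ell$ in $\clos H$; decomposing that path along its intersections with $C$ and replacing each $C$-path piece by an arc of $C$ of no greater length would produce a walk from $x$ to $y$ on $C$ of length $\le 3\ell$, contradicting $\dist_C(x,y)\ge 5\ell$. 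So some $C$-path piece must have both arcs longer than itself, a contradiction. This argument never needs to splice $\hat P$ into $C$ and so avoids the disjointness problem entirely. If you want to rescue your route, you would have to either prove the disjointness (which looks genuinely hard, as Claim~\ref{claim:simpleVersion} gives no control over $\hat P$ off $F$) or work with a cycle extracted from $\hat P\cup P^c$ and re-do the length estimates; the paper's ``shorter-arc'' observation is the cleaner way out.
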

\begin{proof}
\new{Using Claim~\ref{claim:simpleVersion} (ii) it is an easy observation that the diameter of $\clos{H}$ is at most $3\ell$:
indeed, let $x,y\in V(\clos{H})$
and observe that, by~\eqref{Fprops}, 
there is an $x$--$x'$-path and a $y$--$y'$-path in $\clos{H}$ of length at most $\ell$ such that $x',y'\in S_H$.
As $\dist_{\clos{H}}(x',y')\leq \ell$ by Claim~\ref{claim:simpleVersion} (ii), this proves the observation.}

\new{Assume for a contradiction that $\clos{H}$ contains a long cycle.
Let $C$ be a shortest long cycle in $\clos{H}$.
Thus $\ell(C)\geq 10\ell$ by~\eqref{mediumcyc}.
Consider  a $C$-path $P$ contained in~$\clos{H}$.
Then both paths in $C$ between the endvertices of $P$ are
at most as long as~$P$: indeed, otherwise the longer
of the two cycles in $C\cup P$ through $P$ is long, as $\ell(C)\geq 2\ell$,
but it is shorter than $C$, which contradicts the choice of $C$ as shortest long cycle in $\clos H$.}

\new{Recall that $\ell(C)\geq 10\ell$ and pick $x,y\in V(C)$ such that $\dist_C(x,y)\geq 5\ell$.
As there is an $x$--$y$-path $Q$ in $\clos{H}$ of length at most $3\ell$,
there is a $C$-path $P=u\ldots v$ in $\clos H$ 
such that $P$ is shorter than both $u$--$v$-paths in $C$, which is a contradiction.}

\new{This, in particular, implies that 
the diameter of $\clos{H}$ is at most $\frac{\ell}{2}$; recall that $\clos H$ is 
$2$-connected by Claim~\ref{claim:block}.}
\end{proof}

\new{Let us draw an easy consequence from  Claim~\ref{claim:noLongCycleHub}:
if $H$ is a hub, then its shadow $S_H$ cannot contain a cycle since any such cycle 
must be long, by~\eqref{Fprops} and $S_H\subseteq F$. Thus, it follows with 
Claim~\ref{claim:shadowsEdgeDisjointAndConnected} that:
}
\begin{claim}\label{claim:longCycleShadow}
For every hub $H$, the graph $S_H$ is a tree.
\end{claim}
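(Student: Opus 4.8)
The plan is to establish two things about $S_H$ separately: that it is acyclic, and that it is connected. The connectivity is already available to me from Claim~\ref{claim:shadowsEdgeDisjointAndConnected}, which states that the shadow of any hub is connected, so the real content is acyclicity. A connected acyclic graph is by definition a tree, so once both facts are in hand the claim follows immediately.

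For acyclicity I would argue by contradiction. First I would note the routine inclusion $S_H\subseteq F$: the shadow $S_H$ is, by definition, a union of shadows of $F$-paths contained in the bridges belonging to $H$, and each such shadow is a short path \emph{in} $F$. Hence any cycle $C$ contained in $S_H$ is in fact a cycle of $F$, and by~\eqref{Fprops} (equivalently, by the defining property of a frame) every cycle of $F$ is long. On the other hand $C\subseteq S_H\subseteq\clos H=H\cup S_H$, and Claim~\ref{claim:noLongCycleHub} tells us that $\clos H$ contains no long cycle. This contradiction shows that $S_H$ has no cycle.

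I do not expect any genuine obstacle here, since all the substantive work has already been carried out: the hard inputs are Claim~\ref{claim:noLongCycleHub} (no long cycle in a hub closure) and Claim~\ref{claim:shadowsEdgeDisjointAndConnected} (connectivity of the shadow), and what remains is only the elementary observation $S_H\subseteq F$ together with the fact that frames contain only long cycles. The one point worth stating carefully is that $S_H\subseteq F$, so that a cycle in $S_H$ genuinely is a cycle of $F$ and the ``all cycles of $F$ are long'' conclusion applies; everything else is immediate.
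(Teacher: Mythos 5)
Your proof is correct and coincides with the paper's own argument: the paper likewise derives acyclicity from $S_H\subseteq F$, \eqref{Fprops}, and Claim~\ref{claim:noLongCycleHub}, and obtains connectedness from Claim~\ref{claim:shadowsEdgeDisjointAndConnected}.
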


\begin{claim}\label{claim:arbLength}
Let $H$ be a hub, and let $u,v\in V(S_H)$, $u\neq v$.
Let $r\in \N\cup \{0\}$ be such that
the unique $u$--$v$-path $Q\subseteq S_H$ has length
$$r\ell < \ell(Q) \le (r+1) \ell.$$
Then, for any $t\in\{0,\ldots,r\}$, there is a $u$--$v$-path $P\subseteq \clos{H}$ 
such that 
\[
t \ell \le \ell(P) \le t\ell+\new{2\ell}.
\]
\end{claim}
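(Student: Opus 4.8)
The plan is to obtain $P$ by keeping an initial segment of the tree-path $Q$ (recall $S_H$ is a tree by Claim~\ref{claim:longCycleShadow}, so $Q$ is well defined) of the right length and replacing the remaining tail by a short detour through $\clos H$ supplied by Claim~\ref{claim:simpleVersion}~(ii). First I would dispose of the case $t=r$: here $Q$ itself does the job, since $r\ell<\ell(Q)\le(r+1)\ell\le r\ell+2\ell$. So assume $t\le r-1$ and write $Q=q_0\ldots q_m$ with $q_0=u$, $q_m=v$, $m=\ell(Q)$. Put $w=q_{(t+1)\ell}$, which is well defined and distinct from $u$ and $v$ because $(t+1)\ell\le r\ell<m$, and set $Q_1=uQw$ and $Q_2=wQv$, so $\ell(Q_1)=(t+1)\ell$ and $\ell(Q_2)=m-(t+1)\ell\ge 1$. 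Applying Claim~\ref{claim:simpleVersion}~(ii) to $Q_2\subseteq S_H$ yields a short $w$--$v$-path $\hat Q_2\subseteq\clos H$ with $\dist_F(z,Q_2)\le\ell$ for every $z\in V(\hat Q_2\cap F)$. Let $x$ be the first vertex of $\hat Q_2$, traversed from $v$, that lies on $Q_1$; this exists since $w\in V(Q_1)\cap V(\hat Q_2)$. Then $P:=uQ_1x\hat Q_2v$ is a $u$--$v$-path by the choice of $x$, and $P\subseteq Q_1\cup\hat Q_2\subseteq\clos H$.

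The upper bound is then immediate: $\ell(P)=\dist_{Q_1}(u,x)+\dist_{\hat Q_2}(x,v)\le\ell(Q_1)+\ell(\hat Q_2)<(t+1)\ell+\ell=t\ell+2\ell$. The substance of the proof is the lower bound $\ell(P)\ge t\ell$, for which it suffices to show $\dist_{Q_1}(u,x)\ge t\ell$. Here is the key step: since $x\in V(\hat Q_2\cap F)$, there are a vertex $x'\in V(Q_2)$ and a path $\pi\subseteq F$ from $x$ to $x'$ with $\ell(\pi)\le\ell$. I would argue that $\pi\subseteq S_H$ --- otherwise $\pi$ would contain an $S_H$-path, whose length is at least $4\ell$ by Claim~\ref{claim:simpleVersion}~(i), contradicting $\ell(\pi)\le\ell$. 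As $S_H$ is a tree and $x,x'$ both lie on $Q\subseteq S_H$, the path $\pi$ must be the subpath of $Q$ between $x$ and $x'$; hence, writing $x=q_d$ and $x'=q_{d'}$, we get $|d-d'|=\ell(\pi)\le\ell$. Since $x\in V(Q_1)$ and $x'\in V(Q_2)$ we have $d\le(t+1)\ell\le d'$, so $d\ge d'-\ell\ge t\ell$, i.e.\ $\dist_{Q_1}(u,x)=d\ge t\ell$, as desired.

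The main obstacle is precisely this last step --- ruling out that the detour $\hat Q_2$ re-enters $Q_1$ close to $u$ --- and the ingredients that make it go through are Claim~\ref{claim:simpleVersion}~(i) (long $S_H$-paths, which ultimately rests on the large girth of $F$) together with the fact that $S_H$ is a tree; the distance estimate built into Claim~\ref{claim:simpleVersion}~(ii) is what pins $x$ near $Q_2$. Everything else, including the case $t=r$, the verification that $P$ is a path, and the inequalities above, is routine bookkeeping.
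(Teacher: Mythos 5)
Your proof is correct and follows essentially the same approach as the paper: split $Q$ at the vertex at distance $(t+1)\ell$ from $u$, replace the tail by a short path supplied by Claim~\ref{claim:simpleVersion}~(ii), and use Claim~\ref{claim:simpleVersion}~(i) together with the tree structure of $S_H$ to keep the re-entry point within distance $\ell$ of the split point. The only (cosmetic) difference is that you locate the re-entry vertex $x$ by walking along $\hat Q_2$ from $v$, whereas the paper walks along $Q$ from $u$; the two choices yield the same bounds by symmetric reasoning.
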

\begin{proof}
If $t=r$, then we can choose $P=Q$.
Suppose therefore that $t\in \{0,\ldots,r-1\}$. 
Note that $\ell(Q)> r\ell \ge (t+1)\ell$.
Let $x\in V(Q)$ be the vertex on $Q$ with $\ell(uQx)=(t+1)\ell$.
Next we use Claim~\ref{claim:simpleVersion}~\eqref{item:simpleVersion}
to obtain 
\new{a short $x$--$v$-path $R\subseteq\clos H$}
that uses no vertices of $F$ at distance more than $\ell$ from $xQv$ measured in $F$.

Starting from $u$, let $y$ be the first vertex of $Q$ that lies in $R$.
\new{We first note that $\dist_F(y,xQv)=\dist_{S_H}(y,xQv)$. Indeed, otherwise there is 
an $S_H$-path of length at most $\ell$ as $\dist_F(y,xQv)\leq \ell$ (because $y\in V(\cE)$)---this, however, contradicts Claim~\ref{claim:simpleVersion}~\eqref{simpleii}.
Then, $\dist_{Q}(y,xQv)=\dist_{S_H}(y,xQv)\leq\ell$ as $Q\subseteq S_H$ and $S_H$ is a tree by 
Claim~\ref{claim:longCycleShadow}, and 
therefore 
$t\ell \le \ell(uQy) \le \ell(uQx)=(t+1)\ell$.}
Thus the path $P=uQyRv$ is a path in $\clos{H}$ such that 
\[
t\ell \le \ell(uQy)\le \ell(P) \le \ell(uQx)+\ell(R) \le (t+1)\ell + \new{\ell}.
\]
\end{proof}

For a hub $H$, 
we call those vertices $v\in V(S_H)$ that have neighbours in $F- S_H$ the \emph{gates} of $H$. 
\new{Recall that all $S_H$-paths in $F$ have length at least $2$, by Claim~\ref{claim:simpleVersion}(i).}
Equivalently, $v$ is a gate of $H$ if it lies in $\clos H$ and has a neighbour outside $\clos H$.
Thus,
every path in $G$ that contains a vertex in $G- \clos{H}$ and a vertex in $\clos{H}$
also contains a gate of $H$.
\new{Gates will play a role when we define the hitting set.}

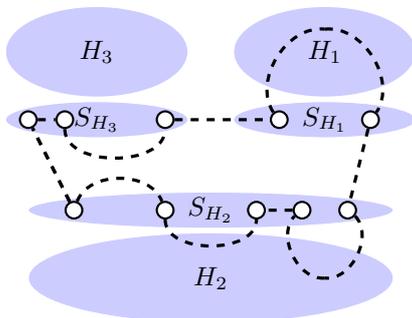
\begin{figure}
\centering
\begin{tikzpicture}[scale=0.6]
\draw[convcols] (0,0) ellipse (2 and 1);
	\node (h1) at (0,0){$H_3$};
\draw[convcols] (0,-1.5) ellipse (2 and 0.4);
	\node (sh1) at (0,-1.5){$S_{H_3}$};
\draw[convcols] (5,0) ellipse (2 and 1);
	\node (h2) at (5,0){$H_1$};
\draw[convcols] (5,-1.5) ellipse (2 and 0.4);
	\node (sh2) at (5,-1.5){$S_{H_1}$};

\draw[convcols] (2.5,-5) ellipse (4 and 1);
	\node (h3) at (2.5,-5){$H_2$};
\draw[convcols] (2.5,-3.5) ellipse (4 and 0.4);
	\node (sh3) at (2.5,-3.5){$S_{H_2}$};	

\coordinate (x) at (2.5,-3.5);
\node[hvertex] (a) at ($(x)+(-3,0)$){} ;
\node[hvertex] (b) at ($(x)+(-1,0)$){} ;
\node[hvertex] (c) at ($(x)+(1,0)$){} ;
\node[hvertex] (d) at ($(x)+(2,0)$){} ;
\node[hvertex] (e) at ($(x)+(3,0)$){} ;
\coordinate (de) at ($(x)+(2.5,-1.5)$) ;

\coordinate (y) at (0,-1.5);
\node[hvertex] (f) at ($(y)+(-1.5,0)$){} ;
\node[hvertex] (g) at ($(y)+(-0.7,0)$){} ;
\node[hvertex] (h) at ($(y)+(1.5,0)$){} ;

\coordinate (z) at (5,-1.5);
\node[hvertex] (i) at ($(z)+(-1,0)$){} ;
\node[hvertex] (j) at ($(z)+(1,0)$){} ;
\coordinate (ij) at ($(z)+(0,2)$) ;

\draw[hedge,dashed, bend left=70] (a) to (b);
\draw[hedge,dashed, bend right=90] (b) to (c);
\draw[hedge,dashed] (a) to (f);
\draw[hedge,dashed] (f) to (g);
\draw[hedge,dashed, bend right=90] (g) to (h);

\draw[hedge,dashed] (a) to (f);

\draw[hedge, dashed] (h) to (i);
\draw[hedge, dashed, bend left=65] (i) to (ij);
\draw[hedge, dashed, bend left=65] (ij) to (j);

\draw[hedge, dashed] (j) to (e);
\draw[hedge, dashed, bend left=70] (e) to (de);
\draw[hedge, dashed, bend left=70] (de) to (d);

\draw[hedge, dashed] (d) to (c);
	
\end{tikzpicture}
\caption{The dashed cycle traverses and visits $H_1$ once, it traverses $H_2$ once and visits $H_2$ twice.
	It does not traverse $H_3$, thus it also does not visit it.}
\label{fig:traversing}
\end{figure}

\begin{claim}\label{claim:cycleSeveralHubsLong}
Let $C$ be a cycle,
and let $H$ be a hub such that $C$ contains an edge both in $E(\clos{H})$ and in $E(G)\sm E(\clos{H})$.
Then $C$ is long.
\end{claim}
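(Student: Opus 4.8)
The plan is to exploit the fact that if $C$ enters and leaves $\clos H$, then $C$ must do so through gates of $H$, and then to either derive an intermediate-length cycle (contradicting~\eqref{mediumcyc}) or to show directly that $C$ is long. First I would observe that since $C$ contains an edge in $\clos H$ and an edge outside $\clos H$, the cycle $C$ must pass through at least two gates of $H$ (in fact an even number): every maximal subpath of $C$ that lies in $\clos H$ begins and ends at a gate. Let me take a maximal subpath $D$ of $C$ with $D\subseteq\clos H$ and endvertices two gates $x,y$, and let $D'=C-\inn D$ be the complementary arc of $C$, which is internally disjoint from $\clos H$ (so its interior lies in $G-\clos H$). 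The idea is to replace $D$ by a short $x$--$y$-path inside $\clos H$ and compare lengths.

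The key tool is Claim~\ref{claim:noLongCycleHub}, which gives $\dist_{\clos H}(x,y)\le\tfrac\ell2$, together with the $2$-connectedness of $\clos H$ (Claim~\ref{claim:block}). Suppose for contradiction that $C$ is short, i.e. $\ell(C)<\ell$. Let $R\subseteq\clos H$ be a shortest $x$--$y$-path, so $\ell(R)\le\tfrac\ell2$. I want to use $R$ together with $D'$ to build a cycle. Since $D'$ is internally disjoint from $\clos H$, and $R\subseteq\clos H$, the only possible common vertices of $R$ and $D'$ are among $\{x,y\}$; hence $R\cup D'$ is a cycle $C^*$ through $x$ and $y$ (here I use that $D'$ is a genuine arc with $x\ne y$, which holds because $C$ hits at least two distinct gates). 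Now $\ell(D')=\ell(C)-\ell(D)$, and since the whole of $C$ lies in $G$ with $D\subseteq\clos H$, I also know $\ell(D)\ge\dist_{\clos H}(x,y)$ is not immediately useful; instead note $\ell(C^*)=\ell(R)+\ell(D')\le\tfrac\ell2+\ell(D')\le\tfrac\ell2+\ell(C)<\tfrac\ell2+\ell<2\ell$. So $C^*$ is a cycle of length less than $2\ell$; if $C^*$ is long it has length between $\ell$ and $2\ell<10\ell$, contradicting~\eqref{mediumcyc}, so $C^*$ must be short. But $C^*$ need not be contained in $F$ or in a single hub, so "short" is not yet a contradiction — this is where the argument needs more care.

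I expect the main obstacle to be precisely this: merely producing a short cycle $C^*$ is not a contradiction, so I need the replacement cycle to be one that is \emph{forced} to be long. The right move is to work not with a short $C$ but to show directly that $C$ is long by a length-counting argument: the arc $D'$ together with a subpath of $C$ realizing the distance, or — better — to route a long cycle through $\clos H$ using Claim~\ref{claim:arbLength}. Concretely, I would argue as follows. Let $D'$ be the arc of $C$ outside $\clos H$ with gate-endpoints $x,y$; the other arc $D\subseteq\clos H$ satisfies $\ell(D)\le 3\ell$ by the diameter bound of Claim~\ref{claim:noLongCycleHub} (after first bounding $\mathrm{diam}(\clos H)\le 3\ell$, or the sharper $\tfrac\ell2$). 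If $\ell(C)<10\ell$, then since $C$ is long (suppose it is, for contradiction that it is \emph{short}, i.e. $\ell(C)<\ell$) we get $\ell(D')<\ell$; but then $x,y$ are joined by the path $D'$ of length $<\ell$ which is internally disjoint from $\clos H\supseteq F$, so $D'$ contains an $F$-path, which by~\eqref{Fprops} is short — fine — and by Claim~\ref{claim:simpleVersion}(i) its shadow has length $\ge 2$... The cleanest finish: take $R\subseteq\clos H$ a shortest $x$--$y$-path and form $C^*=R\cup D'$; then $C^*$ is a cycle using the edge of $C$ outside $\clos H$, so $C^*\not\subseteq F$, hence $C^*$ contains an $F$-path, but $\ell(C^*)<2\ell$, and the unique shadow of that $F$-path together with a bound forces $C^*$ to contain a cycle of $F$ of length $<2\ell$ — impossible by~\eqref{girthF} — unless $C^*$ lies in a single hub closure; and if $C^*\subseteq\clos{H'}$ for some hub $H'$, one checks $H'=H$ (the edge of $D'$ outside $\clos H$ forces $H'\ne H$, contradiction), so no such $C^*$ exists, whence $\ell(C)\ge\ell$, i.e. $C$ is long. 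I would structure the final write-up around this dichotomy — "$C^*$ is in a single hub closure or it is not" — with the diameter bound of Claim~\ref{claim:noLongCycleHub} doing the heavy lifting to keep $\ell(C^*)<2\ell$.
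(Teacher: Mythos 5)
Your high-level instinct is right --- replace the arc of $C$ inside $\clos H$ by a suitable $g$--$g'$-path inside $\clos H$ and derive a contradiction with~\eqref{mediumcyc} --- but the execution has two genuine gaps, and the "cleanest finish" you propose does not actually close the argument.

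\textbf{Gap 1: multiple components of $C\cap\clos H$.} You take a maximal subpath $D\subseteq\clos H$ and assert that the complementary arc $D'=C-\inn D$ is internally disjoint from $\clos H$. That is only true if $C\cap\clos H$ has a single component; in general $D'$ may re-enter $\clos H$, and then $R\cup D'$ is not a cycle (they may share interior vertices), so the whole construction collapses. The paper deals with this by a double-minimality argument: among all short counterexamples, pick one minimizing the total number of hub \emph{traversals}, and subject to that the total number of \emph{visits}. It then proves (\eqref{one} in the paper) that for this extremal $C$, the intersection $C\cap\clos H$ is indeed a single path $Q_H$ --- using the diameter bound of Claim~\ref{claim:noLongCycleHub} to splice in a short $C$-path and show that otherwise one of the two resulting shorter cycles is a "smaller" counterexample. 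Without some such extremality, the single-arc picture is unjustified.

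\textbf{Gap 2: the replacement by a \emph{shortest} path is circular, and the backup argument is wrong.} Taking $R$ a shortest $x$--$y$-path in $\clos H$ gives $\ell(C^*)<2\ell$, so by~\eqref{mediumcyc} $C^*$ is short. But $C^*$ still has an edge in $\clos H$ (from $R$, since $x\ne y$) and an edge outside $\clos H$ (from $D'$), so $C^*$ is itself a counterexample to the claim --- you have gone in a circle, not produced a contradiction. Your attempt to rule $C^*$ out directly ("$C^*$ contains an $F$-path, whose shadow forces a cycle of $F$ of length $<2\ell$") does not work: the $F$-path together with its shadow is a cycle, but that cycle is \emph{not} contained in $F$ (an $F$-path is internally disjoint from $F$), so~\eqref{girthF} gives no contradiction. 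You also cannot conclude that such a $C^*$ "must lie in a single hub closure"; short cycles can perfectly well use several bridges and pieces of $F$.

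\textbf{What the paper actually does, and why it closes.} Once $C\cap\clos H=Q_H$ is a single path with gate endpoints $g,g'$, let $P_H$ be the unique $g$--$g'$-path in the tree $S_H$ and split into cases on $\ell(P_H)$. If $\ell(P_H)<5\ell$, replace $Q_H$ by $P_H$: the result $C'$ has length at most $6\ell$, hence is short by~\eqref{mediumcyc}, and since $P_H\subseteq F$ contains no hub edges, $C'$ has strictly fewer total hub traversals than $C$ --- contradicting the extremal choice (this is precisely where the minimality is cashed in). If $\ell(P_H)\ge 5\ell$, Claim~\ref{claim:arbLength} produces a $g$--$g'$-path $P'_H\subseteq\clos H$ with $\ell\le\ell(P'_H)\le 3\ell$; replacing $Q_H$ by $P'_H$ gives a cycle of length between $\ell$ and $4\ell$, i.e.\ an intermediate cycle, directly contradicting~\eqref{mediumcyc}. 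This is exactly the "route through $\clos H$ using Claim~\ref{claim:arbLength}" move you mention in passing --- the key is to choose the replacement path of \emph{intermediate}, not minimal, length, so that the resulting cycle is forced into the forbidden length window rather than merely into another short counterexample.
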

\begin{proof}
We say a cycle $C$ \emph{traverses} a hub  $H$ if $C$ contains an edge of $H$.
The number of traversals of $H$ is the number of components of $C\cap \clos H$ 
that  contain an edge of $H$. For hubs $H$ that are traversed by $C$,
we define the number of \emph{visits} as the number of components of $C\cap\clos H$
(which will be larger than the number of traversals if $C\cap\clos H$ has components
that are contained in the shadow of $H$). When $C$ fails to traverse $H$ then the 
number of visits is~$0$. \new{See also Figure~\ref{fig:traversing}.}

Suppose there is a short cycle that contains an edge of some hub closure $\clos H$
but is not completely contained in $\clos H$. Choose such a cycle $C$ such 
that the total number of hub traversals is minimal and subject to that 
choose $C$ such that 
the total number of visits is minimal.

We claim:
\begin{equation}\label{one}
\emtext{
if $C$ traverses a hub $H$, then $C\cap \clos H$ is a path.
}
\end{equation}
Suppose that $C\cap\clos H$ has a component $Q_1$ with an edge in $H$ (as $C$ traverses $H$)
and a second component (with or without edge in $H$). By Claim~\ref{claim:noLongCycleHub},
the diameter of $\clos H$ is at most~$\tfrac{\ell}{2}$. Thus, there is a $C$-path $P\subseteq\clos H$
of length at most~$\tfrac{\ell}{2}$ that starts in $Q_1$ and ends in another component $Q_2$ of $C\cap\clos H$.
Let $D_1,D_2$ be the two cycles in $C\cup P$ that contain $P$. 
We observe that 
\begin{equation}\label{two}
\emtext{
each of $D_1,D_2$ shares an edge with $\clos H$ but is not contained 
in $\clos H$. 
}
\end{equation}
Indeed, each of $D_1,D_2$ shares an edge with $\clos H$
because of $P\subseteq \clos H$. Neither of $D_1,D_2$ is contained in $\clos H$:
running along the $P$-path $D_i\cap C$ from the endvertex of $P$ in $Q_1$ we 
see that the first edge outside $Q_1$ lies also outside $\clos H$,
and there must be such an edge since $Q_1$ and $Q_2$ are distinct components.

Moreover,
\begin{equation}\label{three}
\emtext{
each of $D_1,D_2$ is a short cycle.
}
\end{equation}
For $i=1,2$, the length of~$D_i$ is at most $\ell(C)+\ell(P)\leq \ell+\tfrac{\ell}{2}$. As every long 
cycle has length at least~$10\ell$ by~\eqref{mediumcyc}, we deduce that $D_i$ is short.

The cycles $D_1,D_2$ are thus also counterexamples of the claim. To see that one of them contradicts
the minimal choice of $C$, we distinguish two cases.

First, assume that $C$ traverses a second hub $H'\neq H$. Then one of $D_1,D_2$, say $D_1$, meets an edge 
of $H'$. It follows that $D_2$ has at least one hub traversal less than $C$ and, in light of~\eqref{two} and~\eqref{three},
contradicts the minimality of $C$. Second, assume that $C$ traverses only one hub, namely $H$.
Then each of $D_1,D_2$ has fewer visits of $H$ (and at most the same number of traversals) 
and we again obtain a contradiction to the minimality of~$C$.
This proves~\eqref{one}.

Since $C$ is short, $C$ cannot be contained in the frame $F$ and therefore traverses a hub $H$. 
Then, by~\eqref{one}, the component $C\cap\clos H$ is a path, which we denote by~$Q_H$.
Its endvertices are two gates $g,g'$ of $H$.
If we replace $Q_H$ in $C$ by any $g$--$g'$-path in $\clos H$, 
we obtain a cycle, because otherwise $C\cap\clos H$ would have more than one component.	

Let $P_H$ be the (unique) $g$--$g'$-path in $S_H$, and
assume first that $\ell(P_H)<5\ell$. 
We replace in $C$ the path $Q_H$ by $P_H$ and obtain
a cycle $C'$ such that $\ell(C')\leq\ell(C)+\ell(P_H)\leq6\ell$.
Thus together with~\eqref{mediumcyc}, $C'$ is a short cycle. 
Moreover, $C'$ does not traverse $H$ anymore
as $C'\cap\clos H\subseteq S_H$.
Thus, $C'$ contradicts the minimal choice of $C$.

Second, assume that  $\ell(P_H)\ge 5\ell$.
By Claim~\ref{claim:arbLength}, 
there is a $g$--$g'$-path $P'_H$ in $\clos H$ with $\ell \le \ell(P'_H) \le 3\ell$.
Thus, if we replace $Q_H$ by $P'_H$ in $C$, we obtain a cycle $C'$ such that
$\ell\leq\ell(P)\leq\ell(C') \leq\ell(C)+\ell(P'_H)\le 4\ell$,
which is the final contradiction to~\eqref{mediumcyc}.
\end{proof}

\subsection{The hitting set} \label{hittingset}
We distinguish two cases: that $F$ is a cycle \new{and that it is not.}
\new{While it would be possible to treat both cases at once, we prefer not do so. 
The case when $F$ is a cycle is naturally much simpler but already contains some 
of the main ideas. We feel it might be instructive to see these ideas first in a
simpler setting.}
\begin{claim}\label{trivFclm}
Unless there is a hitting set of at most $k-1$ edges,
the frame $F$ is not a cycle.
\end{claim}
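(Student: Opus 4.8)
The plan is to assume that $F$ is a cycle and show that then $G$ either contains $k$ edge-disjoint long cycles or admits a hitting set of at most $k-1$ edges, completing the induction. Since $F$ is a cycle, it is long (every cycle in a frame is long), so $\ell(F)\ge 10\ell$ by~\eqref{mediumcyc}. Crucially, $F$ has no vertices of degree $\ge 3$, so $U(F)=\es$ and $\ds(F)=0$; the ``large frame'' outcome of Lemma~\ref{framelem}(ii) is vacuous here, so we must produce edge-disjoint long cycles by hand, using the hub structure outside~$F$. Every long cycle of $G$ either is contained in $F$ (but there is only one cycle in $F$, namely $F$ itself) or contains an $F$-path, hence traverses a hub. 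By Claim~\ref{claim:noLongCycleHub} no long cycle lives inside a single hub closure, so by Claim~\ref{claim:cycleSeveralHubsLong} every long cycle other than $F$ must leave and re-enter the frame, i.e.\ it alternates between arcs of $F$ and detours through hubs, and in particular it passes through at least two distinct gates.

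The core idea is to cut the connections between hub closures along the cycle $F$ using few edges. First I would argue that on the cyclic order of $F$, the shadows $S_H$ of the distinct hubs are edge-disjoint (Claim~\ref{claim:shadowsEdgeDisjointAndConnected}) and each is a subpath of $F$ (it is a connected subtree of the path/cycle $F$ by Claim~\ref{claim:longCycleShadow} together with $\ell(F)>$ the shadow diameter). So the shadows occupy pairwise-disjoint arcs of $F$. For each hub $H$, let $g_H$ and $g'_H$ be the first and last gate of $H$ along $F$ (if $H$ has a single gate, set $g_H=g'_H$). I would then look at the multiset of ``connector'' pieces: the arcs of $F$ between consecutive hub-shadow-arcs, together with the option of a hub-internal detour. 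The key structural claim is that every long cycle $C\ne F$ must, at some hub $H$ it traverses, enter at a gate $g$ and leave at a gate $g'$ with $g,g'$ on $S_H$, and replace the $F$-arc $gFg'$ by a detour through $H$ or by an $F$-path; combined with $C$ also using at least two hubs or an $F$-path between branch vertices (here there are no branch vertices, so: at least two gates), $C$ is forced to traverse some ``gate-to-gate'' segment that a small cut destroys.

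Concretely, I would take $X$ to be a minimum edge set of $F$ that separates, on the cycle $F$, every pair of gates belonging to distinct hubs — equivalently, pick one $F$-edge strictly between each pair of consecutive hub-shadow-arcs that are ``linked'' by a long cycle. If $G-X$ still contains a long cycle $C$, then $C\ne F$ (as $X\subseteq E(F)$ and $X\ne\es$ kills $F$, unless $X=\es$, in which case there are no hubs linking up and the only long cycle is $F$, handled separately), so $C$ traverses hubs; following $C$ around, between any two consecutive hub-traversals it must run along an arc of $F$ that the cut $X$ was designed to intersect — contradiction. The remaining case is that $X$ is large, $|X|\ge k$; then I would extract $k$ edge-disjoint long cycles directly: each edge of $X$ sits between two hub closures whose gates it separated, and using Claim~\ref{claim:arbLength} to route $g$--$g'$-paths of controlled length through the hubs and stitching them with arcs of $F$, one builds long cycles that are pairwise edge-disjoint because they use disjoint $F$-arcs and internally-disjoint hub detours (the arcs of $F$ between gates can be partitioned, and each cut edge of $X$ yields one independent ``return'' through the frame). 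More carefully, I would orient $F$ and greedily: whenever two cut edges $e_1,e_2$ of $X$ bound an $F$-arc $A$ with a hub $H$ whose shadow lies in $A$, form a long cycle from $A$ plus a short detour — but since we need $k$ of them edge-disjoint, the clean way is to observe $|X|\ge k$ forces at least $k$ pairwise ``independent'' gate-to-gate links, and each link is completed to a long cycle inside its own region of $\clos H\cup(\text{arc of }F)$, using Claim~\ref{claim:arbLength} to reach length $\ge\ell$ while staying $\le 3\ell$, so these cycles are short enough to be edge-disjoint and pairwise non-overlapping.

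I expect the main obstacle to be the second case — turning a large cut $|X|\ge k$ into $k$ genuinely edge-disjoint long cycles. The subtlety is that several required long cycles might want to reuse the same arcs of $F$ or the same hub; the resolution is that the cut $X$ is minimum, so by an exchange/uncrossing argument each cut edge corresponds to an ``essential'' separation, and these essential pieces can be made vertex-disjoint (or at least edge-disjoint) on $F$ while the hub detours, being confined to pairwise edge-disjoint closures $\clos H$ (Claim~\ref{claim:shadowsEdgeDisjointAndConnected} plus the fact that distinct hubs share no edges), automatically stay edge-disjoint. The length control in Claim~\ref{claim:arbLength} — giving a $g$--$g'$-path of length in $[\ell,3\ell]$ through a hub whose shadow is long, or directly using the $F$-arc when the shadow is short — is exactly what ensures each constructed cycle is long. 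Getting the bookkeeping of ``which gate pairs are linked by a long cycle'' precisely right, so that the separating set $X$ is simultaneously a hitting set when small and a certificate for $k$ cycles when large, is the delicate part; everything else is routine given Claims~\ref{claim:noLongCycleHub}, \ref{claim:arbLength}, and~\ref{claim:cycleSeveralHubsLong}.
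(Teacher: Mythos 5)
Your proposal diverges from the paper's argument and, more importantly, has a genuine gap in the place you yourself flag as delicate. The paper's proof is both shorter and exploits a cleaner reduction: it fixes \emph{one} hub shadow (a path on the cycle $F$, by Claim~\ref{claim:longCycleShadow}), takes its two endvertices $u_1,u_2$, and splits $F$ into the two arcs $P_1,P_2$ between them. Each hub shadow then sits entirely inside one arc, so one gets an edge-disjoint decomposition $G=\overline P_1\cup\overline P_2$ where $\overline P_i$ consists of $P_i$ plus the hubs whose shadows lie on $P_i$. Since hub closures contain no long cycle and are blocks of $\overline P_i$ (the endvertices of a shadow are cutvertices), every long cycle splits into a $u_1$--$u_2$-path in $\overline P_1$ and one in $\overline P_2$. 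Now Menger's theorem, applied \emph{once} to the pair $(u_1,u_2)$ in each $\overline P_i$, gives the desired dichotomy directly: either $\le k-1$ edges separate $u_1$ from $u_2$ in some $\overline P_i$ (a hitting set), or one finds $k$ edge-disjoint $u_1$--$u_2$-paths in each half, pairing up to $k$ edge-disjoint cycles that are long by Claim~\ref{claim:cycleSeveralHubsLong}.

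In contrast, your cut $X$ is constrained to lie in $E(F)$ and is supposed to separate \emph{every} pair of gates from distinct hubs along the cycle $F$. The size of such an $X$ scales with the number of hubs, not with $k$, so the ``small cut'' case does not land at $\le k-1$ edges. Moreover, cutting only $F$-edges need not hit a long cycle, because a long cycle can bridge over your cut edges via hub-internal detours. The real problem, however, is the second case: you acknowledge that turning $|X|\ge k$ into $k$ edge-disjoint long cycles is ``the delicate part,'' but the sketch never resolves it — the greedy/uncrossing argument you propose would need to show that distinct cut edges yield distinct long cycles that don't compete for the same $F$-arcs or hub detours, and that claim isn't established. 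The fix is precisely the paper's move: concentrate the entire argument on a single pair $(u_1,u_2)$, so that ``large cut'' becomes a straightforward Menger application in each half of the decomposition, and ``small cut'' gives a hitting set of the right size automatically. Without that decomposition, the two outcomes don't align at the threshold $k$, and the proof doesn't close.
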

\begin{proof}
Assume $F$ to be a cycle. 
\new{If there are no hubs then $G=F$, and the graph becomes 
acyclic once a single edge is deleted. 
We may therefore assume that there are hubs.}

As shadows of hubs are trees \new{(Claim~\ref{claim:longCycleShadow}) 
and $F$ is a cycle, every shadow of a hub is a path.} 
In particular, 
the cycle $F$ cannot lie in a single shadow. 
\new{Let $u_1,u_2\in V(F)$ be the two (distinct) endvertices of a some hub shadow.
As hub shadows are edge-disjoint (by Claim \ref{claim:shadowsEdgeDisjointAndConnected}) 
it follows that neither of 
$u_1$ and $u_2$ lies in the interior of any hub shadow.}

Denote by $P_1$ and $P_2$ the two edge-disjoint $u_1$--$u_2$-paths
in $F$.
For $i=1,2$,
we let $\overline P_i$ be the union of $P_i$ and all hubs $H$ so that $S_H\subseteq P_i$.
Then
\[
G=\overline P_1\cup\overline P_2.
\]
Indeed, any edge $e$ of $F$ is contained in $P_1\cup P_2$. 
If $e\in E(G)\sm E(F)$, then $e$ lies in a hub, and every hub is contained in 
either $\overline P_1$ or in $\overline P_2$ as its shadow lies in either $P_1$
or in $P_2$. 

Since hub closures are blocks in $\overline P_i$---the endvertices 
of their shadow-paths are cutvertices in $\overline P_i$---it follows from Claim~\ref{claim:noLongCycleHub}
that
every long cycle contains 
an edge of $\overline P_1$ and an edge of $\overline P_2$.
More precisely,
every long cycle can be decomposed into two $u_1$--$u_2$-paths---one in each $\overline{P}_i$.

Suppose that for $i=1$ or for $i=2$, there is a set $X$ of at most $k-1$ edges
that separates $u_1$ from $u_2$ in $\overline P_i$. Then, $X$ meets every 
long cycle, since every such cycle contains a $u_1$--$u_2$-path in both $\overline P_1$
and $\overline P_2$.
This means that $X$ is a hitting set of size at most~$k-1$,
and we are done.

\new{If, on the other hand, $u_1$ and $u_2$ cannot be separated in either of $\clos P_1$
and $\clos P_2$ by fewer than $k$ edges then, by Menger's theorem,} for $i=1,2$ there are $k$ edge-disjoint $u_1$--$u_2$-paths
$Q_1^i,\ldots,Q^i_k$ contained in $\overline P_i$. We combine them 
to $k$ edge-disjoint cycles $Q_1^1\cup Q_1^2,\ldots, Q_k^1\cup Q_k^2$,
each of which is long, by Claim~\ref{claim:cycleSeveralHubsLong}, 
a contradiction to our assumption \eqref{noKcycles} that $G$ does not contain $k$ edge-disjoint long cycles.
\end{proof}

\new{As a consequence of Claim~\ref{trivFclm}, we
 may assume from now on that $F$ is not a cycle.
For a set of vertices $Z$, we call a $Z$-path or a cycle containing exactly one vertex of $Z$ a \emph{$Z$-ear}.
Bending the definition a bit, we use path notation for $Z$-ears $P$, even if $P$ is a cycle. 
In particular, we implicitly fix one orientation of $P$ if $P$ is a cycle, and 
then mean by $uPv$ the one subpath between $u$ and $v$ of $P$ which,
in the orientation of $P$, starts at $u$ and ends at $v$.
We may also say that a $Z$-ear $P$ has endvertices $u$ and $v$ (and we may have $u=v$) if 
$P\cap Z=\{u,v\}$.}

\new{
Recall that $U(F)$, which 
we abbreviate to $U$ in this subsection, denotes the set of vertices of $F$ of degree at least~$3$ in $F$, see 
Section~\ref{framesec}. 
As $F$ is not a cycle but connected, by~\eqref{Fprops}, with minimum degree
at least~$2$, it follows that 
$U\neq \emptyset$. This, in turn, implies that}
\begin{equation}\label{Upaths}
\emtext{
$F$ is the edge-disjoint union of $U$-\new{ears}.
}
\end{equation}

We distinguish two kinds of hubs:
A hub $H$ is a \emph{vertex-hub} if $S_H\cap U \neq \es$
and a \emph{path-hub} otherwise.
\new{As the shadow of a hub is connected by Claim \ref{claim:shadowsEdgeDisjointAndConnected}, we observe} that the shadow of a path-hub is completely
contained in some $U$-\new{ear} of $F$. 
Let $\cH$ be the set of all vertex-hubs.
A vertex-hub is shown in Figure~\ref{vxhubfig}, while the hub in Figure~\ref{pathhubfig} 
is a path-hub.

\begin{figure}[htb]
\centering
\begin{tikzpicture}[label distance=-0.5mm, scale=0.7]

\tikzstyle{bridge}=[shape=ellipse, outer sep=0mm, minimum height=5mm, minimum width=13mm,draw, ultra thick, color=dunkelgrau, fill=hellgrau]
\tikzstyle{shadow}=[hellgrau,line width=5pt]

\node[smallvx] (a) at (0,0){};
\node[smallvx] (z) at (10,0){};

\draw[hedge] (a) -- (z);

\node[smallvx] (b) at (1,0){};
\node[smallvx] (c) at (2,0){};
\node[smallvx] (d) at (2.5,0){};
\node[smallvx] (e) at (4.5,0){};
\node[smallvx] (f) at (5.5,0){};
\node[smallvx] (g) at (6.3,0){};
\node[smallvx] (h) at (7.5,0){};
\node[smallvx] (i) at (8.5,0){};

\node[smallvx] (x) at (3.3,0){};

\draw[hedge] (a) -- ++(-0.3,-0.3);
\draw[hedge] (a) -- ++(-0.3,0.3);
\draw[hedge] (z) -- ++(0.3,-0.3);
\draw[hedge] (z) -- ++(0.3,0.3);
\draw[hedge] (z) -- ++(0.4,0);

\node[bridge, rotate=0] (B1) at (2,1){};
\draw[hedge] (B1.south west) to (b);
\draw[hedge] (B1.south east) to (d);
\draw[hedge] (B1.south) to (d);

\node[bridge, rotate=0] (B2) at (3,-1){};
\draw[hedge] (B2.north west) to (c);
\draw[hedge] (B2.north east) to (f);
\draw[hedge] (B2.north) to (x);

\node[bridge, rotate=0] (B3) at (5.5,1){};
\draw[hedge] (B3.south west) to (e);
\draw[hedge] (B3.south east) to (h);

\node[bridge, rotate=0] (B4) at (7,-1){};
\draw[hedge] (B4.north west) to (g);
\draw[hedge] (B4.north east) to (i);
\draw[hedge] (B4.north) to (h);

\node at (-0.7,0) {$F$};

\begin{scope}[on background layer]
\draw[shadow] (b.center) -- (i.center);
\end{scope}

\end{tikzpicture}
\caption{A path-hub consisting of four bridges, and its shadow (in grey).}\label{pathhubfig}
\end{figure}
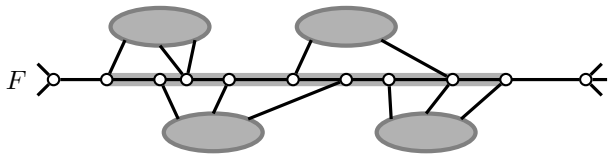

\new{Recall that gates are the vertices in a hub shadow that have neighbours outside the hub.}
For a hub $H$, let $A_H$ be the set of gates of $H$
and let $A^V=\bigcup_{H\in \cH}A_H$. 
\new{That is, $A^V$ is the 
set of all gates of vertex-hubs.}

Next, we give a bound from  above for $\sum_{H\in \cH} |A_H|$ for later use.
\new{As hub shadows are edge-disjoint by Claim~\ref{claim:shadowsEdgeDisjointAndConnected}, 
	every edge of $F$ incident with a gate $g\in A^V$ belongs to at most one hub shadow $S_H$.
	Hence, a gate $g$ belongs to at most $d_F(g)$ vertex-hubs.}

\new{
Let $P$ be a $U$-ear contained in $F$, and let $u,v$ be its endvertices. 
Assume that a vertex-hub $H$ has  a gate $g$ that lies in the interior of $P$.
Since hub shadows are connected, by  Claim~\ref{claim:shadowsEdgeDisjointAndConnected},
it follows that either $uPg$ or $gPv$ lies in $S_H$, say $uPg$. 
As a gate has, by definition, 
a neighbour outside the hub shadow, no inner vertex of $uPg$ can be a gate.
Since, moreover, hub shadows
are edge-disjoint, again by Claim~\ref{claim:shadowsEdgeDisjointAndConnected},
it follows that every $U$-ear in $F$ 
either contains two gates that each belong to only one vertex-hub, 
or the ear contains only one gate that belongs to at most two vertex-hubs, or it contains no gate.
Thus, $\sum_{H\in \cH} |A_H\sm U|$ is
at most twice the number of $U$-ears in $F$:
\begin{align*}
\sum_{H\in \cH} |A_H| & = \sum_{H\in \cH} |A_H\cap U|+\sum_{H\in \cH} |A_H\sm U|\\
&\le \sum_{g \in A^V\cap U} d_F(g) + 
2|\{P:\ \text{$P\subseteq F$ is a $U$-ear}\}| \\
&\leq \sum_{u\in U}d_F(u)+\sum_{u\in U}d_F(u)
\end{align*}
Recall that $\ds(F)$ is defined as $\sum_{u\in U}d_F(u)$, see Section~\ref{framesec}.
We thus obtain $\sum_{H\in \cH} |A_H|\leq 2\ds(F)$.
From~\eqref{Fprops} it therefore follows that
\begin{equation}\label{upperAV}
\sum_{H\in \cH} |A_H|\leq 168k\log k.
\end{equation}
}
\bigskip

Consider a $U$-\new{ear} $P$ of $F$. 
If the shadow of a vertex-hub, \new{which is connected by Claim~\ref{claim:shadowsEdgeDisjointAndConnected}}, \new{meets $P-U$},
then the intersection \new{of the shadow and $P$} is either a path containing at least one endvertex of $P$,
or the disjoint union of two paths each of which contains an endvertex of $P$ \new{(in the case when $P$ is a cycle recall that the shadow of a hub is a tree, by Claim~\ref{claim:longCycleShadow}).} 
Thus at most one component of $P-\bigcup_{H\in\cH}E(\clos H)$ is a path of length at least~$1$. 
If there is such a component $P'$, then let $u_P,v_P$ be the endvertices of $P'$.
Then $P'=u_PPv_P$. 
Let $\mathcal P$ denote the set of all $U$-\new{ears} $P$ of $F$ such that $P-\bigcup_{H\in\cH}E(\clos H)$
is not edgeless. 
We note that
\begin{equation*}
\emtext{if $P\in\mathcal P$, then $u_P,v_P\in A^V\cup U$}.
\end{equation*}
For $P\in\mathcal P$, we define $\clos P$ to be the 
union of $P'$ and all (path-)hubs $H$ so that $S_H\subseteq P'$.
\new{It is no coincidence that the notation $\clos P$ is similar to how 
we denote a hub closure. 
Indeed, vertex-hub closures and 
the subgraphs $\clos P$, $P\in\mathcal P$, have some similar properties (but 
also some that are dissimilar). In particular, the vertices $u_P$ and $v_P$
play the same role as the gates of a vertex-hub.
}

Next, we show
\begin{equation}\label{blubb}
\begin{minipage}[c]{0.8\textwidth}\em
for any two distinct $A,B\in\cH\cup\mathcal P$, the
graphs $\clos A$ and $\clos B$ are edge-disjoint and 
$\clos A\cap\clos B\subseteq A^V\cup \{u_P,v_P:P\in\mathcal P\}$. 
\end{minipage}\ignorespacesafterend 
\end{equation} 
Indeed, this follows directly if both $A,B\in\cH$, and also if both $A,B\in\mathcal P$,
since $U$-\new{ears} in $F$ meet only in $U$. 
If $A\in\cH$ and $B\in\mathcal P$, then 
$u_BBv_B$ meets $\bigcup_{H\in\cH}\clos H$ at most in $\{u_B,v_B\}$, by definition.

We claim that 
\begin{equation}\label{allchoppedup}
G=\bigcup_{H\in\cH}\clos H\cup\bigcup_{P\in\mathcal P}\clos P
\end{equation}
\new{
We remark that this is quite similar to the, admittedly simpler, 
edge-partition that appeared in Claim~\ref{trivFclm}.
To prove~\eqref{allchoppedup},} consider an edge $e\notin\bigcup_{H\in\cH}E(\clos H)$ of $G$.
Assume first that $e$ is contained in the closure of a path-hub $L$.
The shadow of $L$ then is contained in a $U$-\new{ear} $P$ of $F$, by~\eqref{Upaths}. 
Since 
the shadow of $L$ is edge-disjoint from $\bigcup_{H\in\cH}\clos H$ this implies 
that $P\in\mathcal P$. Then $e\in E(\clos L)\subseteq E(\clos P)$. Second, we have to consider the 
case when $e$ is an edge of $F$ that lies outside every hub shadow. Let $P$ be the
$U$-\new{ear} of $F$ containing~$e$. Again we see that $P\in\mathcal P$ and trivially $e$
is contained in $\clos P$. This proves~\eqref{allchoppedup}.

Next we show
\begin{equation}\label{longclosP}
\emtext{
for every $P\in\mathcal P$, every cycle contained in $\clos P$
is short. 
}
\end{equation}
The graph $\clos P$ is the edge-disjoint union of path-hub closures and edges in $F$ that 
lie outside every hub shadow. In particular, the path-hub closures contained in $\clos P$
are blocks in $\clos P$. Thus, any cycle contained in $\clos P$ lies completely in some 
path-hub closure, which only contains short cycles, by Claim~\ref{claim:noLongCycleHub}.

\medskip
We call $P\in\mathcal P$ \emph{thick} if there are at least $k$ edge-disjoint $u_P$--$v_P$-paths in $\clos{P}$,
and \emph{thin} otherwise. 
If $P$ is thin, then
there is a set $X_P\subseteq E(\clos{P})$ of at most $k-1$ edges separating $u_P$ and $v_P$ in $\clos{P}$, by Menger's theorem.
As part of the hitting set we define 
$X_p$ as the union of all $X_P$ where $P\in\mathcal P$ is thin.
By~\eqref{Fprops}, we obtain
\[
|X_p| = \sum_{P\in\mathcal P}|X_P|\leq 
k\cdot \tfrac{1}{2}\ds(F)\leq k \cdot 42k\log k.
\]
We note that~\eqref{longclosP} implies that
\begin{equation}\label{nothin}
\begin{minipage}[c]{0.8\textwidth}\em
in $G-X_p$ every long cycle is edge-disjoint from $\clos{P}$ for every thin $P\in\mathcal P$.
\end{minipage}\ignorespacesafterend 
\end{equation}

Consider $H\in \cH$.
Applying Lemma~\ref{lem:gatessep2} with $\clos{H}$ and $A_H$ playing the roles of $G$ and $A$,
we obtain a set $X_H$ of size at most $|A_H|k$ that $k$-perfectly separates $A_H$ in $\clos{H}$.
Let $X_v=\bigcup_{H\in \cH} X_H$.
With~\eqref{upperAV} we find that	
\begin{align*}
|X_v|\leq k \sum_{H\in \cH} |A_H| \leq k \cdot 168 k \log k = 168k^2\log k.
\end{align*}

We will show that $X=X_p\cup X_v$ is a hitting set for long cycles in $G$. 
We note first that 
\begin{align*}
|X| = |X_p|+ |X_v| \le 42k^2\log k + 168 k^2\log k =210k^2\log k \le f(k,\ell).
\end{align*}
Thus, if $X$ is indeed a hitting set then the induction hypothesis~\eqref{inductionclaim} is 
proved.

Let $\cJ$ be the set of all graphs $J$ such that either $J=\clos{P}$ for a thick $P\in\mathcal P$,
or such that $J$ is a component of $\clos{H}-X$  for some $H\in \cH$.

\goodbreak

\begin{claim}\label{lastclaim}$\rule{0cm}{0cm}$
\begin{enumerate}[\rm (i)]
\item\label{item:sim} \new{For any $J\in \cJ$, we have} $g\sim_k g'$ in $J$ for all $g,g'\in V(J)\cap (A^V\cup\{u_P,v_P:P\in\mathcal P\})$.
\item\label{item:distinctJJ} Distinct $J,J'\in\cJ$ are edge-disjoint, and
their intersection $J\cap J'$ lies in $A^V\cup\{u_P,v_P:P\in\mathcal P\}$.
\new{Furthermore, if $J,J'\subseteq \clos H$ for some $H\in \cH$, then $J\cap J'=\es$.}
\item\label{item:iii} Every long cycle in $G-X$ is entirely contained in $\bigcup_{J\in \cJ}J$
and no long cycle is contained in a single $J\in\cJ$.
\end{enumerate}
\end{claim}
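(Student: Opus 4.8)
The plan is to prove the three statements of Claim~\ref{lastclaim} essentially in order, using the structural work already assembled. For part~\eqref{item:sim}, fix $J\in\cJ$ and two gates $g,g'\in V(J)\cap(A^V\cup\{u_P,v_P:P\in\mathcal P\})$. If $J=\clos P$ for a thick $P\in\mathcal P$, then the only relevant gates are $u_P$ and $v_P$, and $g\sim_k g'$ holds by the very definition of \emph{thick} (there are $k$ edge-disjoint $u_P$--$v_P$-paths in $\clos P$). If instead $J$ is a component of $\clos H-X$ for some $H\in\cH$, then $g,g'\in A_H$, since the only vertices of $A^V\cup\{u_P,v_P\}$ that can sit inside $\clos H$ are gates of $H$ itself (by~\eqref{blubb} the closures meet only in such gate sets). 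Now $X\cap E(\clos H)\supseteq X_H$, which $k$-perfectly separates $A_H$ in $\clos H$; hence, since $g$ and $g'$ lie in the same component of $\clos H-X_H\supseteq \clos H-X=\bigcup_{J}J$, the defining property of $k$-perfect separation gives $g\sim_k g'$ in $\clos H-X_H$. A small point to check is that the paths can be taken inside $J$: as $g,g'$ lie in the component $J$ of $\clos H-X$ and the paths witnessing $g\sim_k g'$ avoid $X_H$, one must argue they avoid all of $X\cap E(\clos H)$; but $X\cap E(\clos H)=X_H$ (the sets $X_{H'}$, $X_P$ are edge-disjoint from $\clos H$ by~\eqref{blubb} and~\eqref{allchoppedup}), so the paths indeed lie in $J$.

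For part~\eqref{item:distinctJJ}, the edge-disjointness and the location of intersections are inherited directly from~\eqref{blubb} and~\eqref{allchoppedup}: distinct $J,J'$ are contained in distinct members of $\cH\cup\mathcal P$, or are distinct components of the same $\clos H-X$. In the first case~\eqref{blubb} gives both edge-disjointness and $J\cap J'\subseteq A^V\cup\{u_P,v_P:P\in\mathcal P\}$; in the second case distinct components of a graph are vertex-disjoint, so $J\cap J'=\es$, which also proves the ``furthermore'' clause. One should also handle the mixed subcase $J\subseteq\clos H$, $J'=\clos P$: again~\eqref{blubb} applies since $\clos H$ and $\clos P$ are edge-disjoint with intersection in the gate set, and $J\subseteq\clos H$, $J'=\clos P$ only shrinks this.

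For part~\eqref{item:iii}, start from~\eqref{allchoppedup}, which writes $G$ as the union of the $\clos H$, $H\in\cH$, and the $\clos P$, $P\in\mathcal P$. Let $C$ be a long cycle in $G-X$. By~\eqref{nothin}, $C$ is edge-disjoint from $\clos P$ for every thin $P$; and each thin $\clos P$ is a union of blocks of $G-X_p$ whose only exits are $u_P,v_P$, so $C$ uses no edge of any thin $\clos P$ at all. Hence every edge of $C$ lies in some $\clos H$ ($H\in\cH$) or some thick $\clos P$; intersecting with $G-X$ and using~\eqref{allchoppedup} again, every edge of $C$ lies in some $J\in\cJ$, i.e.\ $C\subseteq\bigcup_{J\in\cJ}J$. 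It remains to show no long cycle lies in a single $J$. If $J=\clos P$ for thick $P$, then by~\eqref{longclosP} every cycle in $\clos P$ is short, done. If $J$ is a component of $\clos H-X\subseteq\clos H$, then any cycle in $J$ lies in $\clos H$, and by Claim~\ref{claim:noLongCycleHub} the closure $\clos H$ contains no long cycle; done.

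The main obstacle is the bookkeeping in part~\eqref{item:sim}: one must be careful that the Menger-type paths provided by $k$-perfect separation of $A_H$ in $\clos H$ genuinely survive \emph{all} of the deletion $X=X_p\cup X_v$, not merely $X_H$. The key fact making this work is that the edge sets $X_H$ ($H\in\cH$) and $X_P$ ($P$ thin) are pairwise edge-disjoint and each is confined to its own $\clos{\cdot}$, which follows from~\eqref{blubb} and~\eqref{allchoppedup}; so $X\cap E(\clos H)=X_H$ exactly, and the paths that avoid $X_H$ automatically avoid $X$ within $\clos H$. Once this is pinned down, parts~\eqref{item:distinctJJ} and~\eqref{item:iii} are essentially immediate consequences of the already-proved structural statements~\eqref{blubb}, \eqref{allchoppedup}, \eqref{longclosP}, \eqref{nothin} and Claim~\ref{claim:noLongCycleHub}.
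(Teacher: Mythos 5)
Your proof is correct and follows essentially the same route as the paper's: part~\eqref{item:sim} is split into the thick-$\clos P$ case (immediate from the definition of thick) and the component-of-$\clos H - X$ case (via the $k$-perfect separation from Lemma~\ref{lem:gatessep2}), part~\eqref{item:distinctJJ} is read off from~\eqref{blubb} together with the fact that distinct components of $\clos H - X$ are disjoint, and part~\eqref{item:iii} combines~\eqref{allchoppedup}, \eqref{nothin}, \eqref{longclosP} and Claim~\ref{claim:noLongCycleHub}. One useful thing you did that the paper leaves implicit: you spelled out that $X\cap E(\clos H)=X_H$, which (via the pairwise edge-disjointness of the closures from~\eqref{blubb} and~\eqref{allchoppedup}) is exactly what lets one pass from ``$X_H$ $k$-perfectly separates $A_H$ in $\clos H$'' to ``$X$ $k$-perfectly separates $A_H$ in $\clos H$,'' as the paper asserts without comment. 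One small caveat: your justification that ``$g,g'\in A_H$ by~\eqref{blubb}'' is a little loose---\eqref{blubb} bounds where distinct closures can meet, but by itself does not show that a gate of another hub lying in $\clos H$ is a gate of $H$. The paper makes the same tacit identification, so this is not a departure from the paper's argument, but if you wanted to tighten it you would use the ``equivalently'' characterization of gates (a gate of $H$ is a vertex of $\clos H$ with a neighbour outside $\clos H$) together with Claim~\ref{claim:simpleVersion}\eqref{simpleii}, rather than cite~\eqref{blubb}.
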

\begin{proof}
\new{Let us first prove statement \eqref{item:sim} in the case that $J$ is a component of $\clos{H}-X$ for some $H\in \cH$.
The set $X$ separates $A_H$ $k$-perfectly,
i.e. two gates $g,g'\in A_H$  either belong to distinct components in $\clos{H}-X$ or 
$g\sim_k g'$ holds in $\clos{H}-X$ (in particular they belong to the same component of $\clos{H}-X$).
Thus, if $g,g'\in J$ for a component $J$ of $\clos{H}-X$, we conclude $g\sim_k g'$ in $J$.}
Suppose now that $J=\clos{P}$ for a thick $P\in \cP$.
Then $u_P\sim_k v_P$ as $P$ is thick and $\clos P$ is disjoint from~$X$. 

Observe that~\eqref{item:distinctJJ}
follows from~\eqref{blubb} as all $J\in \cJ$ are subgraphs of graphs in $\cH\cup \cP$
and two $J,J'\in \cJ$ that belong to the same vertex-hub $H$ are disjoint by definition as components of $\clos H -X$.

To see \eqref{item:iii},  	
consider a long cycle $C$.  Since $G$ is, by~\eqref{allchoppedup}, the union of vertex-hub closures
and all $\clos P$ for $P\in\mathcal P$, it follows that $G-X$ is contained in the union of all $J\in\cJ$ 
and all $\clos P$ for thin $P\in\mathcal P$. 
By~\eqref{nothin}, the cycle $C$ is edge-disjoint from every $\clos P$,
when $P\in\mathcal P$ is thin, which means that $C$ is contained in the union of all $J\in\cJ$. 
Finally, $C$ cannot be contained in any single $J\in\cJ$ as this is either a subgraph of a hub closure
(recall Claim~\ref{claim:noLongCycleHub}) or equal to $\clos P$ for some $P\in\mathcal P$ 
(recall~\eqref{longclosP}).
\end{proof}

\new{Suppose that $G-X$ contains a long cycle $C$. 
	Then $C$ decomposes by Claim~\ref{lastclaim}~\eqref{item:iii}
	into edge-disjoint non-trivial paths $P_1,\ldots, P_s$ such that each $P_i$ is contained in some $J_i\in\cJ$ and such that 
	$J_i$ and $J_{i+1}$ are distinct for every $i$  (where we put $J_{s+1}=J_1$).
We choose such a $C$ such that the number $s$ of these paths is minimal.
}

\new{Let $g_{i-1}$ and $g_i$ be the endvertices of $P_i$ for $i=1,\ldots, s$, where $g_s=g_0$, and observe that 
	every $g_i$ either lies in $A^V$
	or in $\{u_P,v_P\}$ for some thick $P\in\mathcal P$, by Claim~\ref{lastclaim}~\eqref{item:distinctJJ}.
	We claim 
	\begin{equation}\label{everyJonce}
	J_i\neq J_j\emtext{ for every }i\neq j,
	\end{equation} 
	which means that $C$ traverses no $J\in \cJ$ more than once. 
	Suppose that this is false, and choose $J\in \cJ$ such that $C\cap J$ has at least two components that contain an edge.
	As $J$ is connected, there is a $C$-path $Q\subseteq J$ that joins two such non-trivial components of $C\cap J$.
	Then $C\cup Q$ contains two cycles $D_1$ and $D_2$ that pass through $Q$. 
Both cycles are edge-disjoint from $X$.
	As $\ell(C)>10 \ell$ by \eqref{mediumcyc}, 
	at least one of $D_1$ and $D_2$, say $D_1$, has length at least $5\ell$, and is thus long.
	However, $D_1$ decomposes into fewer paths than $C$ as $D_1\cap J$ contains fewer non-trivial components than $C\cap J$ does.
	This contradicts the choice of $C$ and thus  proves~\eqref{everyJonce}.	
}

\new{By Claim~\ref{lastclaim}~\eqref{item:sim}, 
		there are $k$ edge-disjoint $g_{i-1}$--$g_{i}$-paths $P^1_i,\ldots,P^k_i$ in $J_i$ for every $i=1,\ldots, s$.
		By~\eqref{everyJonce}, 
		any two distinct paths $P_i^r$ and $P_j^t$ are edge-disjoint for $i,j\in\{1,\ldots, k\}$ and $r,t\in \{1,\ldots, s\}$.
	Concatenating, for each $r\in\{1,\ldots, k\}$, the paths $P^r_1,\ldots, P^r_s$ we obtain a closed walk $W_r$. 
	Any two walks $W_{r}$ and $W_t$ are edge-disjoint.
	Every vertex of $W_r$ has degree at least~$2$. This is trivial for the inner vertices of a path $P_i$.
	The endvertex $g_i$ of $P_i$ and $P_{i+1}$ is incident with an edge of $J_i$ and an edge of $J_{i+1}$.
	As $J_i$ and $J_{i+1}$ are edge-disjoint by Claim \ref{lastclaim} (ii), these two edges are distinct.
	Thus, $W_r$ has minimum degree at least~$2$ and therefore contains a cycle $C_r$.
	As $W_r\cap J_i$ is acyclic for every $J_i$ (it consists of the path $P_i$ together with some isolated vertices by \eqref{everyJonce}),
	the cycle $C_r$ uses edges from at least two distinct $J\in \cJ$, and then 
by Claim~\ref{lastclaim}~(ii), also from two distinct hub closures.
As a consequence of  Claim~\ref{claim:cycleSeveralHubsLong} the cycle $C_r$ is long.
	Summing up, we found $k$ edge-disjoint long cycles $C_1,\ldots, C_k$,
	which is the final contradiction to \eqref{noKcycles}.
	Therefore, the set $X$ is indeed a hitting set for the long cycles in $G$.
}
\end{proof}

\bibliographystyle{amsplain}
\bibliography{erdosposa}

\vfill

\small
\vskip2mm plus 1fill
\noindent
Version \today{}
\bigbreak

\noindent
Henning Bruhn
{\tt <henning.bruhn@uni-ulm.de>}\\
Matthias Heinlein
{\tt <matthias.heinlein@uni-ulm.de>}\\
Institut f\"ur Optimierung und Operations Research\\
Universit\"at Ulm, Ulm\\
Germany\\

\noindent
Felix Joos
{\tt <f.joos@bham.ac.uk>}\\
School of Mathematics\\ 
University of Birmingham, Birmingham\\
United Kingdom

\end{document}